\numberwithin{equation}{section}
\newtheorem{theo}{Theorem}[section]
\newtheorem{lemma}[theo]{Lemma}
\newtheorem{example}[theo]{Example}
\newtheorem{assumption}[theo]{Assumption}
\newtheorem{defn}[theo]{Definition}
\newtheorem{remark}[theo]{Remark}
\newenvironment{proof}[1][Proof]{\textbf{#1.} }{\ \rule{0.5em}{0.5em}}
\newcommand{\cov}{{\rm Cov} \mspace{1mu}}
\newcommand{\norm}[1]{\left\lVert#1\right\rVert}
\author{Mariana Olvera-Cravioto\\ \\ University of North Carolina at Chapel Hill}
\title{PageRank's behavior under degree-degree correlations }
\date{}
\begin{document}
\maketitle

\begin{abstract}
The focus of this work is the asymptotic analysis of the tail distribution of Google's PageRank algorithm on large scale-free directed networks. In particular, the main theorem provides the convergence, in the Kantorovich-Rubinstein metric, of the rank of a randomly chosen vertex in graphs generated via either a directed configuration model or an inhomogeneous random digraph. The theorem fully characterizes the limiting distribution by expressing it as a random sum of i.i.d.~copies of the  attracting endogenous solution to a branching distributional fixed-point equation.   In addition, we provide the asymptotic tail behavior of the limit and use it to explain the effect that in-degree/out-degree correlations in the underlying graph can have on the qualitative performance of PageRank. 
\vspace{5mm}

\noindent {\em Keywords: } PageRank, ranking algorithms, directed random graphs, complex networks, degree-correlations, weighted branching processes, distributional fixed-point equations, power laws. 


\end{abstract}

\section{Introduction}

Google's PageRank algorithm \cite{Bri_Pag_98}, originally created to rank webpages in the World Wide Web, is arguably one of the most widely used measures of network centrality. At its core, it is the solution to a large system of linear equations which assigns to each webpage (vertex on a directed graph) a universal {\em rank} or {\em score}, which can then be used to determine the order in which results to a specific query will be displayed. PageRank's popularity is due in part to the fact that it can be efficiently computed even on very large networks, but perhaps more importantly, on its ability to identify ``influential" vertices. The aim of this work, as well as of much of the earlier work on the distribution of the ranks produced by PageRank \cite{Lit_Sch_Vol_07,Vol_Lit_Don_07,Volk_Litv_10,Jel_Olv_10,Chen_Lit_Olv_17,Lee_Olv_19}, is to provide some mathematical interpretation to what PageRank is actually ranking highly, and how this is related to the underlying graph where it is being computed. 

One of the early observations made relating the scores produced by PageRank and the graph where they were computed, was that on scale-free graphs (i.e., those whose degree distributions follow a power-law) the distribution of the ranks and that of the in-degree, seem to be proportional to each other.  This observation led to the so-called {\em Power-law Hypothesis}, which states that on a graph whose in-degree distribution follows a power-law, the PageRank distribution will also follow a power-law with the same tail index. This fact was first proved for trees in \cite{Volk_Litv_10,Jel_Olv_10}, then for graphs generated via the directed configuration model (DCM) in \cite{Chen_Lit_Olv_17}, and more recently, for graphs generated via the inhomogeneous random digraph model (IRD) in \cite{Lee_Olv_19}. In addition, the very recent work in \cite{Gar_vdH_Lit_19} shows that if the underlying graph converges in the local weak sense, then the PageRank distribution on the graph converges to the distribution of PageRank computed on its limit (usually a tree). From that result one can obtain a power-law lower bound for the PageRank tail distribution for a wide class of scale-free graphs. The idea behind the results stated above is that the PageRank of a vertex is mostly determined by its immediate inbound neighborhood, so as long as this neighborhood looks locally like a tree, the PageRank distribution on the graph and on the limiting tree will be essentially the same. Once the analysis on a graph is reduced to analyzing a tree, then we can study the asymptotic behavior of the PageRank distribution using the theory of weighted branching processes \cite{Durr_Ligg_83,Holl_Ligg_81,rosler1,Way_Will_95} and distributional fixed-point equations \cite{Jel_Olv_10,Jel_Olv_12a,Jel_Olv_12b,Als_Dam_Men_12,Alsm_Mein_12,Alsm_Mein_13,Alsm_Dysz_15}. 

One of the main contributions of the work in \cite{Volk_Litv_10,Jel_Olv_10,Chen_Lit_Olv_17, Lee_Olv_19}, was the characterization of the limiting PageRank distribution as the solution to a branching distributional fixed-point equation of the form:
\begin{equation} \label{eq:LinearSFPEInd}
\mathcal{R} \stackrel{\mathcal{D}}{=} \sum_{i=1}^{\mathcal{N}} \mathcal{C}_i \mathcal{R}_i + \mathcal{Q},
\end{equation}
where $\mathcal{R}$ represents the (limiting) rank of a randomly chosen vertex, $\mathcal{N}$ represents its in-degree, $\mathcal{Q}$ its personalization value, and the $\{ \mathcal{R}_i\}$ the ranks of its inbound neighbors; the weights $\{\mathcal{C}_i\}$ are related to the out-degree of the graph and the damping factor (see Section~\ref{S.PageRank} for more details) and $\stackrel{\mathcal{D}}{=}$ denotes equality in distribution. Moreover, this characterization can be used to obtain a {\em large deviations} explanation of what PageRank is ranking highly (we write $f(x)\sim g(x)$ as $x \to \infty$ to mean $\lim_{x \to \infty} f(x)/g(x) = 1$). Specifically, the work in \cite{Jel_Olv_10, Olvera_12b} shows that whenever $\mathcal{N}$ follows a power-law, then
\begin{equation} \label{eq:TailLDind}
P(\mathcal{R} > x) \sim P\left( \mathcal{N} > x/E[\mathcal{C}_1 \mathcal{R}_1] \right) + P\left( \max_{1 \leq i \leq \mathcal{N}} \mathcal{C}_i \mathcal{R}_i > x \right) , \qquad x \to \infty.
\end{equation}
In other words, the most likely way in which a vertex can achieve a high rank is either by having a very large in-degree, or by having a highly ranked inbound neighbor. The proof of the Power-law Hypothesis can be derived by iterating a version of \eqref{eq:TailLDind}, as done in \cite{Jel_Olv_10, Olvera_12b}, or through the use of transforms as in \cite{Volk_Litv_10}. Either way, one shows that the probabilities on the right-hand side of \eqref{eq:TailLDind} are proportional to each other. 

However, one limitation of the results in \cite{Chen_Lit_Olv_17, Lee_Olv_19} is that they only cover graphs where the in-degree and out-degree of each vertex are asymptotically independent, which is not necessarily the case in real-world networks. This paper is aimed at completing the analysis of the PageRank distribution under the most general assumptions possible for the in-degree, out-degree, personalization value and damping factor, while still preserving a full characterization of the limit as well as of its asymptotic tail behavior. In view of this goal, we focus only on two random graph models which converge, in the local weak sense, to a marked Galton-Watson process, since as explained in Section~\ref{SS.WeightedTree}, it is the natural structure where the solutions to branching distributional fixed-point equations can be constructed. 

The main contributions of this paper are two-fold. First, we provide a full generalization of the main theorems in \cite{Chen_Lit_Olv_17, Lee_Olv_19} that holds for arbitrarily dependent $(\mathcal{Q}, \mathcal{N}, \{ \mathcal{C}_i\})$, and that shows that the PageRank of a randomly chosen vertex in a graph generated via either the DCM or IRD models, converges, in the Kantorovich-Rubinstein metric, to a random variable $\mathcal{R}^*$. This random variable $\mathcal{R}^*$ can be written as a random sum of i.i.d.~copies of the attracting endogenous solution to a certain branching distributional fixed-point equation; this representation is different from \eqref{eq:LinearSFPEInd} under in-degree/out-degree correlations. Second, we compute the asymptotic tail behavior of the solution $\mathcal{R}^*$ to provide a qualitative analysis of the types of vertices that PageRank is scoring highly. Again, under in-degree/out-degree correlations this behavior is significantly different from \eqref{eq:TailLDind}. Moreover, our analysis shows that the PageRank of a randomly chosen vertex and that of its inbound neighbors can differ greatly. 

The paper is organized as follows. Section~\ref{S.RandomGraphModels} gives a brief description of the directed configuration model (DCM) and the inhomogeneous random digraph (IRD). Section~\ref{S.PageRank} provides a description of the generalized PageRank algorithm on directed graphs, as well as of its large graph limit on marked Galton-Watson processes. It also includes the first of the main theorems, which establishes the convergence of the PageRank distribution and the characterization of its limit.  Section~\ref{S.PowerLaw} includes all the results on the large deviations analysis of the solution $\mathcal{R}^*$, which represents the rank of a randomly chosen vertex, as well as of that of its inbound neighbors $\{\mathcal{R}_i\}$. To illustrate how the asymptotic analysis done on the limiting tree truly reflects the qualitative behavior of PageRank on large graphs, we include in Section~\ref{S.Numerical} some numerical experiments. Finally, Section~\ref{S.Proofs} contains all the technical proofs.

\section{Directed random graph models} \label{S.RandomGraphModels}

As mentioned in the introduction, in order to obtain a limiting distribution that can be explicitly analyzed, we need to focus on random graph models whose local structure converges to a marked Galton-Watson process, which is where solutions to branching distributional fixed-point equations like \eqref{eq:LinearSFPEInd} are constructed. Two popular random graph models with this property are the directed configuration model and the inhomogeneous random digraph. Moreover, both of these can easily be used to model scale-free real-world networks with arbitrarily dependent in-degrees and out-degrees. Recall that a scale-free graph is one whose in-degree distribution, out-degree distribution or both, follow (asymptotically) a power-law.

\subsection{Directed configuration model} \label{SS.DCM}

One model that produces graphs from any prescribed (graphical) degree sequence is the configuration or pairing model \cite{bollobas, Hofstad1}, which assigns to each vertex in the graph a number of half-edges equal to its target degree and then randomly pairs half-edges to connect vertices. 

We assume that each vertex $i$ in the graph has a degree vector ${\bf D}_i = (D_i^-, D_i^+, Q_i, \zeta_i) \in \mathbb{N}^2 \times \mathbb{R}^2$, where $D_i^-$ and $D_i^+$ are the in-degree and out-degree of vertex $i$, respectively. The values of $Q_i$ and $\zeta_i$ are not needed for drawing the graph but will be used later to compute generalized PageRank. In order for us to be able to draw the graph, we assume that the extended degree sequence $\{ {\bf D}_i: 1 \leq i \leq n\}$ satisfies
$$L_n := \sum_{i=1}^n D_i^- = \sum_{i=1}^n D_i^+.$$
Note that in order for the sum of the in-degrees to be equal to that of the out-degrees, it may be necessary to consider a double sequence $\{ {\bf D}_i^{(n)}: i \geq 1, n \geq 1\}$ rather than a unique sequence. 

Formally, the DCM can be defined as follows.

\begin{defn}
\label{D.DCM}
Let $\{ {\bf D}_i: 1 \leq i \leq n\}$ be an (extended) degree sequence and let $V_n = \{1, 2, \dots, n\}$ denote the nodes in the graph. To each node $i$ assign $D_i^-$ inbound half-edges and $D_i^+$ outbound half-edges. Enumerate all $L_n$ inbound half-edges, respectively outbound half-edges, with the numbers $\{1, 2, \dots, L_n\}$, and let ${\bf x}_n = (x_1, x_2, \dots, x_{L_n})$ be a random permutation of these $L_n$ numbers, chosen uniformly at random from the possible $L_n!$ permutations. The DCM with degree sequence $\{ {\bf D}_i: 1 \leq i \leq n\}$ is the directed graph $\mathcal{G}_n = \mathcal{G}(V_n, E_n)$ obtained by pairing the $x_i$th outbound half-edge with the $i$th inbound half-edge.
\end{defn}

We point out that instead of generating the permutation ${\bf x}_n$ of the outbound half-edges up front, one could alternatively construct the graph one vertex at a time, by pairing each of the inbound half-edges with an outbound half-edge, randomly chosen with equal probability from the set of unpaired outbound half-edges. 

We emphasize that the DCM is in general a multi-graph, that is, it can have self-loops and multiple edges in the same direction. However, provided the pairing process does not create self-loops or multiple edges, the resulting graph is uniformly chosen among all graphs having the prescribed degree sequence.  If one chooses this degree sequence according to a power-law, one immediately obtains a scale-free graph.  It was shown in~\cite{Chen_Olv_13} that the random pairing of inbound and outbound half-edges results in a simple graph with positive probability provided both the in-degree and out-degree distributions possess a finite variance. In this case, one can obtain a simple realization after finitely many attempts, a method we refer to as the {\it repeated} DCM. Furthermore, if the self-loops and multiple edges in the same direction are simply removed, a model we refer to as the {\it erased} DCM, the degree distributions will remain asymptotically unchanged.

For the purposes of this paper, self-loops and multiple edges in the same direction do not affect the main convergence result for the ranking scores, and therefore we do not require the DCM to result in a simple graph. 

We will use $\mathscr{F}_n = \sigma( {\bf D}_i: 1 \leq i \leq n)$ to denote the sigma algebra generated by the extended degree sequence, which does not include information about the random pairing. To simplify the notation, we will use $\mathbb{P}_n( \cdot ) = P( \cdot | \mathscr{F}_n)$ and $\mathbb{E}_n[ \cdot ] = E[ \cdot | \mathscr{F}_n]$ to denote the conditional probability and conditional expectation, respectively, given $\mathscr{F}_n$.

\subsection{Inhomogeneous random digraphs} \label{SS.IRD}

Alternatively, one could think of obtaining the scale-free property as a consequence of how likely different nodes are to have an edge between them. In the spirit of the classical Erd\H os-R\'enyi graph \cite{Erdos, Gilbert, Austin, Janson, Bollobas2, Durrett1}, we assume that whether there is an edge between vertices $i$ and $j$ is determined by a coin-flip, independently of all other edges. Several models capable of producing graphs with inhomogeneous degrees while preserving the independence among edges have been suggested in the recent literature, including: the Chung-Lu model \cite{Chunglu, Chunglu2, Chunglu3, Chunglu4, Lu}, the Norros-Reittu model (or Poissonian random graph) \cite{Norros, Hofstad1, Esker_Hofs_Hoog}, and the generalized random graph \cite{Hofstad1, Brittonetal, Esker_Hofs_Hoog}, to name a few. In all of these models, the inhomogeneity of the degrees is created by allowing the success probability of each coin-flip to depend on the ``attributes" of the two vertices being connected; the scale-free property can then be obtained by choosing the attributes according to a power-law.

We now give a precise description of the family of directed random graphs that we study in this paper, which includes as special cases the directed versions of all the models mentioned above. Throughout the paper we refer to a directed graph $\mathcal{G}_n = \mathcal{G}(V_n, E_n)$ on the vertex set $V_n = \{1 , 2, \dots, n\}$ simply as a {\em random digraph} if the event that edge $(i,j)$ belongs to the set of edges $E_n$ is independent of all other edges. 

In order to obtain inhomogeneous degree distributions, to each vertex $i \in V_n$ we assign a {\em type} ${\bf W}_i = (W_i^-, W_i^+, Q_i, \zeta_i) \in \mathbb{R}_+^2 \times \mathbb{R}^2$. The $W_i^-$ and $W_i^+$ will be used to determine how likely vertex $i$ is to have inbound/outbound neighbors. As for the DCM, it may be necessary to consider a double sequence $\{{\bf W}_i^{(n)}:  i \geq 1, \, n \geq 1\}$ rather than a unique sequence. With some abuse of notation, we will use $\mathscr{F}_n =\sigma( {\bf W}_i: 1 \leq i \leq n)$ to denote the sigma algebra generated by the type sequence, and  define $\mathbb{P}_n(\cdot) = P( \cdot | \mathscr{F}_n)$ and $\mathbb{E}_n[ \cdot] = E[ \cdot | \mathscr{F}_n]$ to be the conditional probability and conditional expectation, respectively, given the type sequence.

We now define our family of random digraphs using the conditional probability, given the type sequence, that edge $(i,j) \in E_n$, 
\begin{equation} \label{eq:EdgeProbabilities}
p_{ij}^{(n)} \triangleq \mathbb{P}_n \left( (i,j) \in E_n \right) = 1 \wedge  \frac{W_i^+ W_j^-}{\theta n} (1 + \varphi_n({\bf W}_i, {\bf W}_j)) , \qquad 1 \leq i \neq j \leq n,
\end{equation}
where $-1 < \varphi_n({\bf W}_i, {\bf W}_j) = \varphi(n, {\bf W}_i, {\bf W}_j, \mathscr{W}_n)$ a.s.~is a function that may depend on the entire sequence $\mathscr{W}_n := \{{\bf W}_i: 1 \leq i \leq n\}$, on the types of the vertices $(i,j)$, or exclusively on $n$, and $0<\theta < \infty$ satisfies
$$\frac{1}{n} \sum_{i=1}^n (W_i^- + W_i^+) \stackrel{P}{\longrightarrow} \theta, \qquad n \to \infty.$$
 Here and in the sequel, $x \wedge y = \min\{x,y\}$ and $x \vee y = \max\{x, y\}$. In the context of \cite{Boll_Jan_Rio_07, Cao_Olv_19}, definition \eqref{eq:EdgeProbabilities} corresponds to the so-called rank-1 kernel, i.e., $\kappa({\bf W}_i, {\bf W}_j) = \kappa_+({\bf W}_i) \kappa_-({\bf W}_j)$, with $\kappa_+({\bf W}) = W^+/\sqrt{\theta}$ and $\kappa_-({\bf W}) = W^-/\sqrt{\theta}$.

 \section{Generalized PageRank} \label{S.PageRank}
 
We now move on to the analysis of the typical behavior of the PageRank algorithm on the two directed random graph models described earlier. We show that the distribution of the ranks produced by the algorithm converges in distribution to 
a finite random variable $\mathcal{R}^*$ which can be explicitly constructed using a marked Galton-Watson process. For completeness, we give below a brief description of the algorithm, which is well-defined for any directed graph $\mathcal{G}_n = \mathcal{G}(V_n, E_n)$ on the vertex set $V_n = \{1, 2, \dots, n\}$ with edges in the set $E_n$. 

Let $D_i^-$ and $D_i^+$ denote the in-degree and out-degree, respectively, of vertex $i$ in $\mathcal{G}_n$.   The generalized PageRank vector ${\bf r} = (r_1, \dots, r_n)$ is the unique solution to the following system of equations:
\begin{equation} \label{eq:genPageRank}
r_i =   \sum\limits_{(j,i)\in E_n} \frac{\zeta_j}{D_j^+} \cdot r_j + (1-c) q_i,\quad i=1, \dots,n,
\end{equation}
where ${\bf q} = (q_1, \dots, q_n)$ is a probability vector known as the personalization or teleportation vector, $\{\zeta_i\}$ are referred to as the weights and they satisfy $|\zeta_i| \leq c$ for all $i$, and $c \in (0,1)$ is the damping factor. In the original formulation of PageRank \cite{Bri_Pag_98}, the personalization values and weights are given, respectively, by $q_i = (1-c)/n$ and $\zeta_i = c$ for all $1 \leq i \leq n$. The formulation given in \cite{Chen_Lit_Olv_17, Lee_Olv_19} is more general, and it allows any choice of personalization vector (not necessarily a probability vector) and weights. We refer the reader to \S1.1 in \cite{Chen_Lit_Olv_17} for further details on the history of PageRank, its applications, and a matrix representation of the solution ${\bf r}$ to \eqref{eq:genPageRank}. 

In order to analyze {\bf r}  on directed complex networks, we first eliminate the dependence on the size of the graph by computing the scale free ranks $(R_1, \dots, R_n) = {\bf R} =: n {\bf r}$, which corresponds to solving:
\begin{equation} \label{eq:scaleFreePageRank}
R_i =  \sum\limits_{(j,i)\in E_n}  C_j R_j +  Q_i, \quad i = 1, \dots, n,
\end{equation}
where $Q_i = (1-c) q_i n$ and $C_j = c \zeta_j/(D_j^+ \vee 1)$ (note that if vertex $j$ appears on the right-hand side of \eqref{eq:genPageRank}, then it must satisfy $D_j^+ \geq 1$, so including the maximum with one does not change the system of linear equations). In matrix notation, \eqref{eq:scaleFreePageRank} can be written as:
$${\bf R} = {\bf R} \, \text{diag}({\bf C}) {\bf A}  +  {\bf Q} ,$$
where ${\bf A}$ is the adjacency matrix of $\mathcal{G}_n$, ${\bf I}$ is the identity matrix in $\mathbb{R}^{n \times n}$, and $\text{diag}({\bf x})$ denotes the diagonal matrix defined by vector ${\bf x} = (x_1, \dots, x_n)$. It follows that the generalized PageRank vector can be written as:
$${\bf R} = {\bf Q}  ({\bf I} - {\bf M} )^{-1} =  {\bf Q}  \sum_{k=0}^\infty {\bf M}^k,$$
with ${\bf M} := \text{diag}({\bf C}) {\bf A}$. Note that the matrix ${\bf I} - {\bf M} $ is always invertible by construction, since $\norm{{\bf M}_{i \bullet}}_1 \leq c < 1$ for all $1 \leq i \leq n$, where ${\bf M}_{i\bullet}$ is the $i$th row of matrix ${\bf M}$.  In the PageRank literature it is common to replace the zero rows of matrix ${\bf M}$, which correspond to {\em dangling nodes} (vertices in $\mathcal{G}_n$ with zero out-degree), with vector ${\bf q}$ (assuming it is a probability vector). Doing this makes it possible to interpret the PageRank vector ${\bf r}$ as the stationary distribution of a random walk on graph $\mathcal{G}_n$. However, since our formulation does not require ${\bf Q}$ to be a probability vector and allows for random weights $\{\zeta_i\}$, we keep the zero rows of ${\bf M}$ intact.

\subsection{The limiting distribution} \label{SS.WeightedTree}

The work in \cite{Chen_Lit_Olv_17} and \cite{Lee_Olv_19} shows that the distribution of generalized PageRank, in both the DCM and IRD models, converges to a random variable $\mathcal{R}^*$ defined in terms of the attracting endogenous solution to a stochastic fixed-point equation known as the {\em smoothing transform}. However, the approach used there required the asymptotic independence between the in-degree and out-degree of the same vertex, which is not always realistic for modeling real-world graphs. Here, we identify a different smoothing transform that can incorporate degree-degree dependencies and still provide exact asymptotics for the generalized PageRank distribution.

In order to define the limit $\mathcal{R}^*$ to which the generalized PageRank of a randomly chosen vertex converges to, we first construct a marked delayed\footnote{``delayed'' refers to the fact that the root is allowed to have a different distribution.} Galton-Watson process. We use $\emptyset$ to denote the root node of the tree, and give every other node a label of the form ${\bf i} = (i_1, i_2, \dots, i_k) \in \mathcal{U}$, where $\mathcal{U} = \bigcup_{k=0}^\infty (\mathbb{N}_+)^k$ is the set of all finite sequences of positive integers with the convention that $\mathbb{N}_+^0 = \{ \emptyset\}$. For ${\bf i} = (i_1)$ we simply write ${\bf i} = i_1$, that is, without the parenthesis, and we use $({\bf i}, j) = (i_1, \dots, i_k, j)$ to denote the index concatenation operation.  The label of a node provides its entire lineage from the root.  Next, we use a sequence of independent vectors of the form $\{ (\mathcal{N}_{\bf i}, \mathcal{Q}_{\bf i}, \mathcal{C}_{\bf i}) : {\bf i} \in \mathcal{U} \}$, satisfying $\mathcal{N}_{\bf i} \in \mathbb{N}$ for all ${\bf i}$, to construct the tree as follows. Let $A_0 = \{ \emptyset\}$ and define 
$$A_{k} = \{ ({\bf i}, j) \in \mathcal{U}: {\bf i} \in A_{k-1}, 1 \leq j \leq \mathcal{N}_{\bf i} \}, \qquad k \geq 1,$$
to be the set of nodes at distance $k$ from the root, equivalently, the set of individuals in the $k$th generation of the tree. The vector $(\mathcal{Q}_{\bf i}, \mathcal{C}_{\bf i})$ will be referred to as the {\em mark} of node ${\bf i}$ , and we use it to define the weight $\Pi_{\bf i}$ of node ${\bf i}$ according to the recursion
$$\Pi_\emptyset \equiv 1 \qquad \text{and} \qquad \Pi_{({\bf i},j)} = \Pi_{\bf i}  \mathcal{C}_{({\bf i}, j)}, \quad {\bf i} \in \mathcal{U}.$$
Figure \ref{F.WeightedTree} illustrates the construction. 

\begin{center}
\begin{figure}[ht]
\begin{picture}(480,110)(-40,0)
\put(25,10){\includegraphics[scale = 0.75]{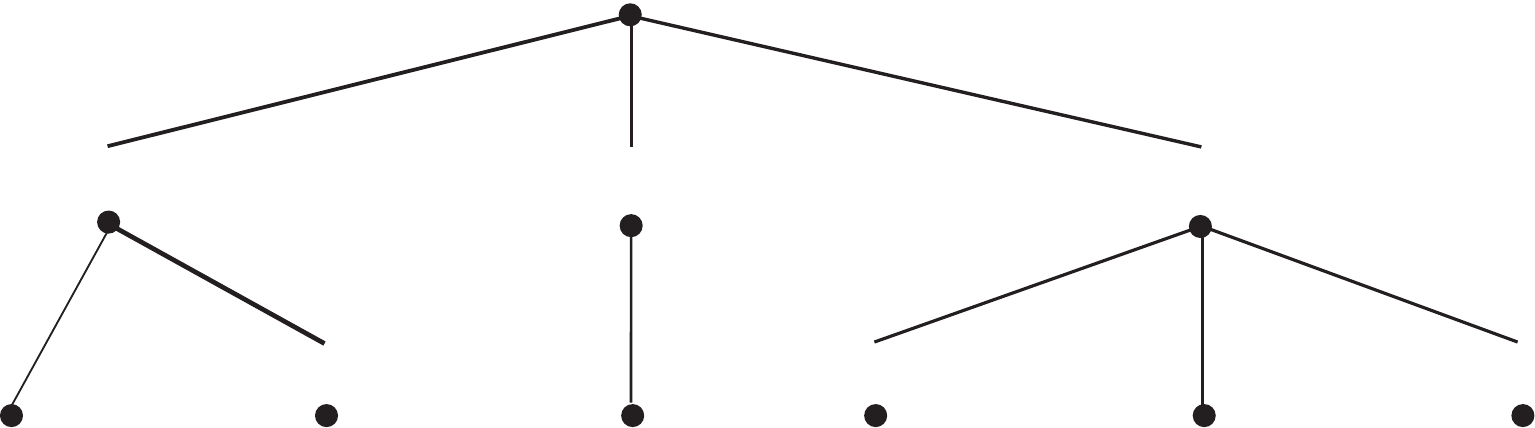}}
\put(150,107){\footnotesize $\Pi_\emptyset = 1$}
\put(30,61){\footnotesize $\Pi_{1} =  \mathcal{C}_1$}
\put(145,61){\footnotesize $\Pi_{2} =  \mathcal{C}_2$}
\put(270,61){\footnotesize $\Pi_{3} =  \mathcal{C}_3$}
\put(0,2){\footnotesize $\Pi_{(1,1)} = \Pi_1  \mathcal{C}_{(1,1)} $}
\put(62,19){\footnotesize $\Pi_{(1,2)} = \Pi_1 \mathcal{C}_{(1,2)}$}
\put(129,2){\footnotesize $\Pi_{(2,1)} =  \Pi_2  \mathcal{C}_{(2,1)}$}
\put(182,19){\footnotesize $\Pi_{(3,1)} = \Pi_3  \mathcal{C}_{(3,1)} $}
\put(252,2){\footnotesize $\Pi_{(3,2)} = \Pi_3  \mathcal{C}_{(3,2)} $}
\put(318,19){\footnotesize $\Pi_{(3,3)} = \Pi_3  \mathcal{C}_{(3,3)} $}
\end{picture}
\caption{Weighted tree.}\label{F.WeightedTree} \label{F.WBT}
\end{figure}
\end{center}

We will refer to $\{ (\mathcal{N}_{\bf i}, \mathcal{Q}_{\bf i}, \mathcal{C}_{\bf i}): {\bf i} \in \mathcal{U} \}$ as the sequence of branching vectors, and we will assume that they are independent, with the $\{ (\mathcal{N}_{\bf i}, \mathcal{Q}_{\bf i}, \mathcal{C}_{\bf i}): {\bf i} \in \mathcal{U}, {\bf i} \neq \emptyset \}$ i.i.d.~copies of some generic vector $(\mathcal{N}, \mathcal{Q}, \mathcal{C})$. Next, define the random variables
$$\mathcal{R}_{\bf i}  :=  \sum_{k=0}^\infty \sum_{{\bf j}: ({\bf i}, {\bf j}) \in A_{m+k}} (\Pi_{({\bf i}, {\bf j})} /\Pi_{\bf i})  \mathcal{Q}_{({\bf i},{\bf j})}, \qquad \text{for } {\bf i} \in A_m,$$
with the convention that $\Pi_{({\bf i}, {\bf j})} /\Pi_{\bf i} \equiv 1$ if $\Pi_{\bf i} = 0$. Note that these random variables satisfy
\begin{align*}
\mathcal{R}_{\bf i} &=  \mathcal{Q}_{\bf i} +   \sum_{k=1}^\infty \sum_{l=1}^{\mathcal{N}_{\bf i}}  \sum_{{\bf j}: ({\bf i},l, {\bf j}) \in A_{m+k}} (\Pi_{({\bf i},l, {\bf j})} /\Pi_{\bf i})  \mathcal{Q}_{({\bf i}, l, {\bf j})}  \\
&=  \mathcal{Q}_{\bf i} +    \sum_{l=1}^{\mathcal{N}_{\bf i}}  \mathcal{C}_{({\bf i},l)} \sum_{k=0}^\infty  \sum_{{\bf j}: ({\bf i},l, {\bf j}) \in A_{m+1+k}} (\Pi_{({\bf i},l, {\bf j})} /\Pi_{({\bf i},l)})  \mathcal{Q}_{({\bf i}, l, {\bf j})} \\
&=  \mathcal{Q}_{\bf i} + \sum_{l=1}^{\mathcal{N}_{\bf i}} \mathcal{C}_{({\bf i},l)}  \mathcal{R}_{({\bf i},l)} . 
\end{align*}
In other words, $\mathcal{R}_{\bf i}$ is the generalized PageRank of node ${\bf i}$ in the marked tree. 

If we assumed independence between $\mathcal{C}_{\bf i}$ and $\mathcal{R}_{\bf i}$, we could relate the $\{\mathcal{R}_{\bf i}\}$ with the attracting endogenous solution to the stochastic fixed-point equation
$$\mathcal{R} \stackrel{\mathcal{D}}{=}  \mathcal{Q} +  \sum_{j=1}^{\mathcal{N}} \mathcal{C}_j \mathcal{R}_j,$$
where the $\{\mathcal{R}_j\}$ are i.i.d.~copies of $\mathcal{R}$ and are independent of the vector $(\mathcal{Q}, \mathcal{N},  \{ \mathcal{C}_i\})$. However, the degree-degree correlations considered here violate the independence between $\mathcal{C}_j$ and $\mathcal{R}_j$, and the distributional equation above no longer holds. To fix this problem, define
$$X_{\bf i} := \mathcal{C}_{\bf i} \mathcal{R}_{\bf i}$$
and note that the $\{X_{\bf i}\}_{{\bf i} \in \mathcal{U}, {\bf i} \neq \emptyset}$ satisfy
$$X_{\bf i} =  \mathcal{C}_{\bf i} \mathcal{Q}_{\bf i} + \sum_{j=1}^{\mathcal{N}_{\bf i}}    \mathcal{C}_{\bf i}  X_{({\bf i}, j)},$$
with the $\{ X_{({\bf i}, j)}\}_{j \geq 1}$ i.i.d.~and independent of $(\mathcal{Q}_{\bf i}, \mathcal{N}_{\bf i}, \mathcal{C}_{\bf i})$.  To see this independence note that for ${\bf i} \in A_{m+1}$ we have
\begin{align*}
X_{({\bf i}, j)} &=  \sum_{k=0}^\infty \sum_{{\bf l}: ({\bf i},j, {\bf l}) \in A_{m+1+k}} \mathcal{C}_{({\bf i}, j)} (\Pi_{({\bf i},j, {\bf l})} /\Pi_{({\bf i},j)} )  \mathcal{Q}_{({\bf i},j, {\bf l})} ,
\end{align*}
with $\Pi_{({\bf i}, j, {\bf l})} / \Pi_{({\bf i}, j)}$ independent of $(\mathcal{Q}_{\bf i}, \mathcal{N}_{\bf i}, \mathcal{C}_{\bf i})$ for any ${\bf l} \in \mathcal{U}$. In other words, the $\{  X_{\bf i} \}$ satisfy the stochastic fixed-point equation
\begin{equation} \label{eq:NewSFPE}
X \stackrel{\mathcal{D}}{=} \mathcal{C} \mathcal{Q} + \sum_{j=1}^\mathcal{N} \mathcal{C} X_j,
\end{equation}
where the $\{X_j\}$ are i.i.d.~copies of $X$ and are independent of the vector $(\mathcal{Q}, \mathcal{N}, \mathcal{C})$. Moreover, the $\{X_j\}$ have the same distribution as the attracting endogenous solution to \eqref{eq:NewSFPE}, which has been extensively analyzed in \cite{Holl_Ligg_81, Durr_Ligg_83, Way_Will_95, Als_Big_Mei_10, Alsm_Mein_12, Alsm_Mein_13, Jel_Olv_12a, Jel_Olv_12b, Olvera_12b, Als_Dam_Men_12, Bur_Dam_Zie_15,Alsm_Dysz_15} .  

Finally, the random variable $\mathcal{R}^*$ to which the generalized PageRank of a randomly chosen vertex converges to, can be written as:
$$\mathcal{R}^* = \mathcal{Q}_0 + \sum_{j=1}^{\mathcal{N}_0} X_j,$$
where the $\{X_j\}_{j \geq 1}$ are i.i.d., independent of $(\mathcal{Q}_0, \mathcal{N}_0)$, and have the same distribution as $X$.  Note that the vector $(\mathcal{Q}_0, \mathcal{N}_0)$ is allowed to have a different distribution than the generic branching vector $(\mathcal{Q}, \mathcal{N}, \mathcal{C})$, which is important in view of the bias produced by sampling vertices in the graph who are already known to have an outbound neighbor.

We now give the main assumption needed for the convergence of the generalized PageRank distribution. For the DCM, let
\begin{equation} \label{eq:DegreeDistr}
F_n(m,k,q,x) = \frac{1}{n} \sum_{i=1}^n 1(D_i^- \leq m, D_i^+ \leq k, Q_i \leq q, \zeta_i \leq x),
\end{equation}
and for the IRD, let
\begin{equation} \label{eq:TypeDistr}
F_n(u,v,q,x) = \frac{1}{n} \sum_{i=1}^n 1(W_i^- \leq u, \, W_i^+ \leq v, \, Q_i \leq q, \, \zeta_i \leq x).
\end{equation}
Our main convergence assumption is given in terms of the Kantorovich-Rubinstein distance (or Wasserstein distance of order 1), denoted $d_1$.

\begin{assumption} \label{A.Primitives}
Let $F_n$ be defined according to either \eqref{eq:DegreeDistr} or \eqref{eq:TypeDistr}, depending on the model, and suppose there exists a distribution $F$ (different for each model) such that
$$d_1(F_n, F) \stackrel{P}{\longrightarrow} 0, \qquad n \to \infty.$$
In addition, assume that $\max_{1 \leq i \leq n} |\zeta_i| \leq 1$ and the following conditions hold:
\begin{itemize}
\item[A.] In the DCM, let $(\mathscr{D}^-, \mathscr{D}^+, Q, \zeta)$ have distribution $F$, and suppose the following hold:
\begin{enumerate}
\item[1.] $E[ \mathscr{D}^-] = E[\mathscr{D}^+]$.

\item[2.] $E[ \mathscr{D}^- + \mathscr{D}^+  + |Q| ] < \infty$ and $|\zeta| \leq c < 1$  a.s.
\end{enumerate}

\item[B.] In the IRD, let $(W^-, W^+, Q, \zeta)$ have distribution $F$, and suppose the following hold:
\begin{enumerate}
\item[1.] $\displaystyle \mathcal{E}_n = \frac{1}{n} \sum_{i=1} \sum_{1 \leq j \leq n, j \neq i} |p_{ij}^{(n)} - (r_{ij}^{(n)} \wedge 1) | \xrightarrow{P} 0$ as $n \to \infty$, where $r_{ij}^{(n)} = W_i^+ W_j^-/(\theta n)$.

\item[2.] $E[ W^- + W^+  + |Q| ] < \infty$ and $|\zeta| \leq c < 1$  a.s.

\end{enumerate}

\end{itemize}
\end{assumption}

We now give the main convergence result for the PageRank of a randomly chosen vertex to $\mathcal{R}^*$. This result fully generalizes Theorem~6.4 in \cite{Chen_Lit_Olv_17} for the DCM and Theorem~3.3 in \cite{Lee_Olv_19} for the IRD, respectively. Specifically, it holds under weaker moment conditions, it allows the in-degree and out-degree of each vertex to be arbitrarily dependent, and shows a stronger mode of convergence which implies the convergence of the means\footnote{Theorem~6.4 in \cite{Chen_Lit_Olv_17} and Theorem~3.3 in \cite{Lee_Olv_19} give only the convergence in distribution.}.

\begin{theo} \label{T.PageRank}
Suppose that one of the following holds:
\begin{itemize}
\item[i)] The graph $\mathcal{G}_n$ is a DCM and its extended degree sequence satisfies Assumption~\ref{A.Primitives}(A).
\item[ii)] The graph $\mathcal{G}_n$ is an IRD and its type sequence satisfies Assumption~\ref{A.Primitives}(B). 
\end{itemize}
Let $R_\xi$ denote the rank of a uniformly chosen vertex in $\mathcal{G}_n$ and let $H_n(x) = \mathbb{P}_n( R_\xi \leq x)$. Then, 
$$d_1(H_n, H) \xrightarrow{P} 0, \qquad n \to \infty,$$
where $H(x) = P( \mathcal{R}^* \leq x)$, $E[|\mathcal{R}^*|] < \infty$, and 
\begin{equation} \label{eq:LinearCombination}
\mathcal{R}^*= \mathcal{Q}_0 +  \sum_{j=1}^{\mathcal{N}_0} X_j,
\end{equation}
where the $\{ X_j \}_{j \geq 1}$ are i.i.d.~copies of the attracting endogenous solution to \eqref{eq:NewSFPE}, independent of $(\mathcal{N}_0, \mathcal{Q}_0)$. Moreover, the vectors $(\mathcal{N}_0, \mathcal{Q}_0)$ and  $(\mathcal{N}, \mathcal{Q}, \mathcal{C})$ satisfy for $m \in \mathbb{N}$, $t,q \in \mathbb{R}$,
\begin{align*}
P( \mathcal{N}_0 = m, \mathcal{Q}_0 \in dq) &= P( \mathscr{D}^- = m, Q \in dq) \\
P( \mathcal{N} = m, \mathcal{Q} \in dq, \mathcal{C} \in dt) &= E\left[ 1( \mathscr{D}^- = m, Q \in dq, \zeta/(\mathscr{D}^+ \vee 1) \in dt) \cdot \frac{\mathscr{D}^+}{E[\mathscr{D}^+]} \right]
\end{align*}
for the DCM and
\begin{align*}
P( \mathcal{N}_0 = m, \mathcal{Q}_0 \in dq) &= P\left( Z^- = m, Q \in dq \right) \\
P( \mathcal{N} = m, \mathcal{Q} \in dq, \mathcal{C} \in dt) &= E\left[1(Z^- = m,  Q \in dq, \zeta/(Z^++1) \in dt) \cdot \frac{W^+ }{E[W^+]} \right],
\end{align*}
for the IRD, where $Z^-$ and $Z^+$ are mixed Poisson random variables with mixing parameters $E[W^+] W^-/\theta$ and $E[W^-] W^+/\theta$, respectively, conditionally independent given $(W^-, W^+)$.  
\end{theo}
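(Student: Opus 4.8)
The plan is to establish the convergence $d_1(H_n, H) \xrightarrow{P} 0$ by a \emph{coupling} argument that matches the inbound exploration process of a uniformly chosen vertex $\xi$ in $\mathcal{G}_n$ with the first several generations of the marked Galton-Watson tree described in Section~\ref{SS.WeightedTree}, and then controlling the error made by truncating both the graph PageRank and the tree PageRank at a finite depth. Since $R_\xi$ satisfies \eqref{eq:scaleFreePageRank}, we may write $R_\xi = \sum_{k \geq 0} (\text{contributions along inbound paths of length } k)$, with the length-$k$ term equal to a sum over directed paths $\xi = v_0 \leftarrow v_1 \leftarrow \cdots \leftarrow v_k$ of $Q_{v_k} \prod_{l=1}^{k} C_{v_{l-1}}$ — exactly the structure of the truncated tree sum $\mathcal{R}^*_{(K)} := \mathcal{Q}_0 + \sum_{j=1}^{\mathcal{N}_0} X_j^{(K)}$, where $X_j^{(K)}$ keeps only the first $K$ generations below the root's $j$-th child. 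The two halves of the argument are therefore: (a) a tail bound showing $d_1$ (or $L^1$) distance between $R_\xi$ and its depth-$K$ truncation, and the same for $\mathcal{R}^*$ and $\mathcal{R}^*_{(K)}$, goes to $0$ as $K \to \infty$, uniformly in $n$; and (b) for each fixed $K$, the depth-$K$ truncations converge.

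For step (a), the key estimate is that along each inbound path the product of weights $\prod C_{v_{l-1}}$ has absolute value at most $c^k$ because $|\zeta_i| \leq c$ forces $|C_j| = |c\zeta_j/(D_j^+\vee 1)| \le c$, while the expected number of length-$k$ inbound paths from $\xi$ is controlled by $L_n/n \to E[\mathscr{D}^-]=E[\mathscr{D}^+]$ (DCM) or by $E[W^+] <\infty$ (IRD); multiplying, the depth-$\ge K$ tail is bounded by $\sum_{k\ge K} c^k \cdot (\text{const})^k \cdot E[|Q|+\ldots]$, which is summable provided $c<1$ — this is why only \emph{first} moments of $(\mathscr{D}^\pm, Q)$ are needed, a genuine weakening of \cite{Chen_Lit_Olv_17,Lee_Olv_19}. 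On the tree side the same geometric bound applies verbatim: $|\Pi_{\bf i}| \le c^{|\bf i|}$ and $E[\mathcal{N}] = E[\mathscr{D}^+ \cdot \mathscr{D}^+/E[\mathscr{D}^+]]$... wait — here one needs $E[\mathcal{N}] < \infty$, which in the DCM is $E[(\mathscr{D}^+)^2]/E[\mathscr{D}^+]$ and \emph{is not} assumed finite; so the truncation on the tree must instead be controlled directly through $E[|X|] \le \sum_k c^{k+1} E[|\mathcal{Q}|] E[\mathcal{N}]^k$ — again this requires care. The resolution is that the correct size-biased offspring law has $E[\mathcal{N}] = E[\mathscr{D}^+ \cdot \mathscr{D}^+]/E[\mathscr{D}^+]$, and finiteness of $E[|\mathcal{R}^*|]$ is delicate; the honest route is to bound $E[|X|]$ via the observation that $\sum_{{\bf i}\in A_k} |\Pi_{\bf i}|$ is a product of independent generations with per-generation mean $E[\mathcal{N}]\,c = E[(\mathscr{D}^+)^2]/E[\mathscr{D}^+]\cdot c$. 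Since this can be infinite, one instead works with $E\big[\sum_{{\bf i} \in A_k}|\Pi_{\bf i}|\big]$ computed along the \emph{unbiased} tree where generation $0$ is size-biased and deeper generations telescope: $E[\sum_{{\bf i}\in A_k}|\Pi_{\bf i}\mathcal{Q}_{\bf i}|] \le c^k E[\mathscr{D}^-] \cdot (\text{something bounded})$. Making this precise — i.e.\ that the $\zeta/(\mathscr{D}^+\vee 1)$ and $\mathscr{D}^+/E[\mathscr{D}^+]$ factors combine so that each extra generation contributes a bounded factor and an extra $c$ — is the crux of showing $E[|\mathcal{R}^*|] < \infty$ under only first-moment assumptions.

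For step (b), the depth-$K$ local structure of the DCM around $\xi$ converges (in total variation on the space of marked trees of depth $K$) to the size-biased marked GW tree: this is standard local-weak-convergence for the configuration model, where the only new ingredient is that the marks $(Q_{v_k}, \zeta_{v_k}, D_{v_k}^+)$ travel with the vertices, and the in/out-degree pair of a vertex reached by following an inbound half-edge is distributed as the size-biased $(\mathscr{D}^-, \mathscr{D}^+)$ with the $\mathscr{D}^+$-bias — precisely the law of $(\mathcal{N},\mathcal{Q},\mathcal{C})$ in the theorem, with $\mathcal{C} = \zeta/(\mathscr{D}^+\vee 1)$. For the IRD the analogous statement replaces degrees by mixed-Poisson random variables $Z^-, Z^+$ with the stated mixing parameters, and the convergence is driven by Assumption~\ref{A.Primitives}(B.1) which says the actual edge probabilities $p_{ij}^{(n)}$ are $L^1$-close to the rank-1 kernel $r_{ij}^{(n)}\wedge 1$. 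The hypothesis $d_1(F_n,F)\xrightarrow{P}0$ upgrades all of this from convergence in distribution to $d_1$-convergence and gives the $\mathbb{P}_n$-conditional version; combining (a) uniformly in $n$ with (b) for each $K$, and using that $d_1$ metrizes weak convergence plus convergence of first absolute moments, yields $d_1(H_n,H)\xrightarrow{P}0$.

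The main obstacle, as flagged above, is the finiteness of $E[|\mathcal{R}^*|]$ (equivalently, summability of the tree PageRank series) under only \emph{first}-moment assumptions on the degrees — the size-biasing that produces the offspring law $\mathcal{N}$ naively demands a second moment of $\mathscr{D}^+$. The key insight that makes it work is that the weight carried down each edge is not just a bounded-by-$c$ factor but is exactly $\zeta/(\mathscr{D}^+\vee 1)$, so when one sums $|\Pi_{\bf i} \mathcal{Q}_{\bf i}|$ over a generation and takes expectations, the offending $\mathscr{D}^+$ in the bias numerator is cancelled by the $\mathscr{D}^+$ in the denominator of $\mathcal{C}$; each generation then contributes a factor bounded by $c \cdot E[\mathscr{D}^-]/E[\mathscr{D}^+] \cdot (\text{bounded})$ rather than $c\cdot E[(\mathscr{D}^+)^2]/E[\mathscr{D}^+]$, and the series converges geometrically. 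Carrying this cancellation carefully through both models — and verifying it survives the conditioning on $\mathscr{F}_n$ and the passage to the limit — is where the real work lies; the local convergence in step (b) is comparatively routine given the cited results on the DCM and IRD.
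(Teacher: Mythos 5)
Your proposal follows essentially the same route as the paper: truncate both the graph PageRank and the tree PageRank at a finite depth $k$ (exploiting $|C_j|\leq c<1$), couple the depth-$k$ inbound neighborhood of $\xi$ with the size-biased marked Galton--Watson tree, and establish $E[|\mathcal{R}^*|]<\infty$ via exactly the cancellation you identify --- the $\mathscr{D}^+$ in the size-bias numerator against the $\mathscr{D}^+\vee 1$ in the denominator of $\mathcal{C}$, giving $E[\mathcal{N}|\mathcal{C}|]\leq c\,E[\mathscr{D}^-]/E[\mathscr{D}^+]=c<1$ (this is the paper's Lemma~\ref{L.TailLimit}). Two caveats: first, $E[\mathcal{N}]=E[\mathscr{D}^-\mathscr{D}^+]/E[\mathscr{D}^+]$, not $E[(\mathscr{D}^+)^2]/E[\mathscr{D}^+]$ --- the bias is on $\mathscr{D}^+$ but the offspring count is $\mathscr{D}^-$ --- though your resolution does not depend on this slip; second, step (b) is less routine than you suggest, since when $E[\mathcal{N}]=\infty$ the Kantorovich--Rubinstein distance between the laws of $(\hat N_1,\hat Q_1,\hat C_1)$ and $(\mathcal{N},\mathcal{Q},\mathcal{C})$ may not even be defined, and the paper must instead prove $d_1$-convergence for the reweighted vector $(\hat C_1\hat Q_1,\hat C_1 1(\hat N_1\geq 1),\hat C_1 1(\hat N_1\geq 2),\dots)$ (Lemma~\ref{L.KRconvergence}) and separately upgrade the coupling from ``equal with high probability'' to $L^1$ via a uniform-integrability argument (Theorem~\ref{T.Coupling}). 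Also note that on the graph side the paper avoids any path-counting of the form $c^k(\mathrm{const})^k$ entirely: the row sums of ${\bf M}$ are at most $c$, so $\|{\bf Q}{\bf M}^k\|_1\leq c^k\|{\bf Q}\|_1$ deterministically.
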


\begin{remark}
Note that the theorem does not preclude the possibility that $E[ \mathcal{N} + |\mathcal{Q}|] = \infty$, which will indeed be the case whenever $E[ \mathscr{D}^- \mathscr{D}^+ + |Q|  \mathscr{D}^+] = \infty$ in the DCM or when $E[ W^- W^+ + |Q|  W^+] = \infty$ in the IRD. In fact, if $E[ \mathcal{N} ] = \infty$, the underlying weighted branching process on which the $\{X_j\}$ are constructed will have an infinite mean offspring distribution. What is interesting, is that even if this is the case, the distribution of PageRank will remain well-behaved, and in particular, will have finite mean. 
\end{remark}


\section{The power law behavior of PageRank} \label{S.PowerLaw}

Using the characterization of $\mathcal{R}^*$ given in Theorem~\ref{T.PageRank}, we can now establish the power law behavior of PageRank on a graph $\mathcal{G}_n$ whenever the limiting in-degree distribution of the graph, i.e., $\mathcal{N}_0$, has a power law distribution. The theorem below gives two possible asymptotic expressions depending on the distribution of the generic branching vector $(\mathcal{Q}, \mathcal{N}, \mathcal{C})$. In the statement of the theorem, we have made no assumptions on the relationship between the distributions of $(\mathcal{N}_0, \mathcal{Q}_0)$ and $(\mathcal{N}, \mathcal{Q}, \mathcal{C})$, i.e., they do not require to be related via the distributions in Theorem~\ref{T.PageRank}.  Throughout this section we assume that $\mathcal{C} \geq 0$ and use the notation $\rho_\alpha := E[ \mathcal{N} \mathcal{C}^\alpha]$ for $\alpha > 0$. We also need the following definitions.

\begin{defn}
Let $X$ be a random variable with right tail distribution $\overline{F}(x) = P(X > x)$. 
\begin{itemize}
\item We say that $\overline{F}$ is {\em regularly varying with tail index $\alpha$}, denoted $\overline{F} \in RV(-\alpha)$, if
$$\lim_{x \to \infty} \frac{\overline{F}(\lambda x)}{\overline{F}(x)} = \lambda^{-\alpha} \quad \text{for any $\lambda > 0$}. $$

\item We say that $\overline{F}$ belongs to the {\em intermediate regular variation} class, denoted $\overline{F} \in IR$, if
$$\lim_{\delta \downarrow 0} \limsup_{x \to \infty} \frac{\overline{F}((1-\delta) x)}{\overline{F}(x)} = 1.$$
\end{itemize}
\end{defn}

Our first theorem describing the asymptotic behavior of $\mathcal{R}^*$ is given below. Throughout the paper we write $f(x) = o(g(x))$ as $x \to \infty$ whenever $\lim_{x \to \infty} f(x)/g(x) = 0$. 

\begin{theo} \label{T.PageRankRoot} 
Fix $\alpha > 1$ and suppose $\rho_1 \vee \rho_\alpha < 1$. Then,
\begin{itemize}
\item[a.)] If $P(\mathcal{N C} > x) \in RV(-\alpha)$, $E[ \mathcal{Q C}] > 0$, $E[|\mathcal{Q C}|^{\alpha+\epsilon}] < \infty$ and $\rho_{\alpha+\epsilon} < \infty$, for some $\epsilon > 0$, we have
$$P(X > x) \sim \frac{(E[\mathcal{Q C}])^\alpha}{(1-\rho_1)^\alpha (1-\rho_\alpha)} P( \mathcal{NC} > x), \qquad x \to \infty.$$
If in addition, $P(\mathcal{N}_0 > x) \in IR$, $E[\mathcal{N}_0] < \infty$ and $P(\mathcal{Q}_0 > x) = o(P(\mathcal{N}_0 > x))$ as $x \to \infty$, we have 
$$P(\mathcal{R}^* > x) \sim E[\mathcal{N}_0] P( X > x) + P\left( \mathcal{N}_0 > x/E[X] \right), \qquad x \to \infty. $$

\item[b.)] If $P(\mathcal{Q C} > x) \in RV(-\alpha)$, $E[|\mathcal{Q C}|^\beta] < \infty$ for all $0 < \beta < \alpha$,  and $E[ (\mathcal{N} \mathcal{C})^{\alpha+\epsilon}] < \infty$ for some $\epsilon > 0$, we have 
$$P(X > x) \sim (1- \rho_\alpha)^{-1} P( \mathcal{QC} > x), \qquad x \to \infty.$$
If in addition, $P(\mathcal{Q}_0 > x) \in IR$, $E[\mathcal{N}_0] < \infty$, and $P(\mathcal{N}_0 > x) = o(P(\mathcal{Q}_0 > x))$ as $x \to \infty$, we have 
$$P(\mathcal{R}^* > x) \sim E[ \mathcal{N}_0 ] P(X > x) + P( \mathcal{Q}_0 > x), \qquad  x \to \infty. $$
\end{itemize}
\end{theo}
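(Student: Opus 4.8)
The plan is to attack the two halves of the theorem — the tail asymptotics of $X$ (the solution to the smoothing transform \eqref{eq:NewSFPE}) and then the tail asymptotics of $\mathcal{R}^* = \mathcal{Q}_0 + \sum_{j=1}^{\mathcal{N}_0} X_j$ — in sequence, since the second half takes the first as input. For part (a), the tail of $X$, I would recognize \eqref{eq:NewSFPE} as a linear branching recursion with nonnegative weights and invoke the implicit-renewal / branching-transform machinery for the smoothing transform (the body of results in \cite{Jel_Olv_12a,Jel_Olv_12b,Olvera_12b}). The heuristic is that a large value of $X$ is most cheaply produced by one large ``one-jump'' contribution $\mathcal{C}\mathcal{Q}$ or $\mathcal{C}X_j$ down some path of the weighted tree; summing the path contributions and using $E[X] = E[\mathcal{Q}\mathcal{C}]/(1-\rho_1)$ (which follows by taking expectations in \eqref{eq:NewSFPE}, valid since $\rho_1<1$) together with the geometric-type series in the weights produces the constant $(E[\mathcal{QC}])^\alpha/((1-\rho_1)^\alpha(1-\rho_\alpha))$. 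Concretely I would write $X = \mathcal{C}\mathcal{Q} + \sum_{j=1}^{\mathcal N}\mathcal{C}X_j$, condition on $(\mathcal N,\mathcal Q,\mathcal C)$, and set up the renewal-type identity for $\overline{F}_X$; the key analytic inputs are $\rho_\alpha<1$ (so the relevant renewal measure is finite), $\rho_{\alpha+\epsilon}<\infty$ and $E[|\mathcal{QC}|^{\alpha+\epsilon}]<\infty$ (to control the ``two or more big jumps'' remainder via a Potter-bounds / convolution-tail argument), and $E[\mathcal{QC}]>0$ together with $P(\mathcal{NC}>x)\in RV(-\alpha)$ (so that the driving input is $\mathcal{NC}$-dominated rather than $\mathcal{QC}$-dominated). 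For part (b) the same scheme runs with the roles of $\mathcal{QC}$ and $\mathcal{NC}$ swapped: now the regularly varying input is $\mathcal{QC}$ itself, $E[(\mathcal{NC})^{\alpha+\epsilon}]<\infty$ kills the branching contributions' heavy tail, and one gets the cleaner constant $(1-\rho_\alpha)^{-1}$.

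For the $\mathcal{R}^*$ statement in each part, the structure $\mathcal{R}^* = \mathcal{Q}_0 + \sum_{j=1}^{\mathcal N_0} X_j$ is a randomly stopped sum of i.i.d.\ copies of $X$, independent of $(\mathcal N_0,\mathcal Q_0)$, so I would use the classical large-deviations dichotomy for random sums: the tail is (asymptotically) the larger of ``many moderate summands'' — i.e.\ $\mathcal N_0$ itself is large, contributing $P(\mathcal N_0 > x/E[X])$ via the law of large numbers applied to $\sum_{j\le \mathcal N_0} X_j \approx \mathcal N_0 E[X]$ — and ``one large summand'', contributing $E[\mathcal N_0]\,P(X>x)$ by a one-big-jump / Wald-type first-moment argument. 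In part (a) the hypotheses $P(\mathcal N_0>x)\in IR$, $E[\mathcal N_0]<\infty$, and $P(\mathcal Q_0>x)=o(P(\mathcal N_0>x))$ guarantee that $\mathcal Q_0$ is negligible and that both surviving terms are of the same regularly-varying order (note $P(X>x)\asymp P(\mathcal{NC}>x)$ and $P(\mathcal N_0>x)$ are both $RV(-\alpha)$ up to the independence/relatedness caveat, so neither dominates a priori and both are kept). In part (b), $\mathcal{QC}$ drives $X$, so $P(X>x)$ is $RV(-\alpha)$, while the direct $\mathcal Q_0$ contribution survives because $P(\mathcal Q_0>x)\in IR$ dominates $P(\mathcal N_0>x)$; here the stopped-sum large-deviation estimate must account for the extra additive $\mathcal Q_0$ term, which is handled by the assumed tail ordering. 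The technical tool throughout is a result of the type in \cite{Olvera_12b} (or Fa\'a-di-Bruno / Kesten-type bounds for random sums of subexponential variables) giving $P(\sum_{j=1}^{N} X_j > x) \sim E[N] P(X>x) + P(N > x/E[X])$ under intermediate regular variation.

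The main obstacle I anticipate is the uniform control of the ``multiple big jumps'' error term in the tail analysis of $X$, i.e.\ showing that the contribution to $\{X>x\}$ from paths in the weighted branching tree along which \emph{two or more} of the increments $\{\mathcal{C}_{\bf i}\mathcal{Q}_{\bf i}\}$ or $\{\mathcal{C}_{\bf i} X_{\bf i}\}$ are simultaneously of order $x$ is $o(P(\mathcal{NC}>x))$ (resp.\ $o(P(\mathcal{QC}>x))$). This is where the moment conditions $\rho_{\alpha+\epsilon}<\infty$, $E[|\mathcal{QC}|^{\alpha+\epsilon}]<\infty$ (part a) and $E[(\mathcal{NC})^{\alpha+\epsilon}]<\infty$ (part b) are consumed: one needs a Breiman-type lemma combined with a summable bound over generations of the tree, exploiting that $\rho_\alpha<1$ makes $\sum_{k\ge 0}\rho_\alpha^k<\infty$ and hence the expected number of ``weighted records'' at level $k$ decays geometrically. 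A secondary, more bookkeeping-level difficulty is verifying that the attracting endogenous solution — as opposed to some other solution of \eqref{eq:NewSFPE} — is the one whose tail obeys these asymptotics; this is where one appeals to the uniqueness/characterization results for the smoothing transform cited in the excerpt, together with $E[X]<\infty$ (already established in Theorem~\ref{T.PageRank}) pinning down the solution. Once $\overline{F}_X$ is in hand, the $\mathcal{R}^*$ step is comparatively routine, modulo carefully checking, in part (a), that the two kept terms are genuinely non-negligible relative to each other and that the $IR$ (rather than full $RV$) hypothesis on $\mathcal N_0$ suffices for the random-sum large-deviation estimate.
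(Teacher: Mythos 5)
Your proposal is correct and follows essentially the same route as the paper: the tail of $X$ is obtained by direct appeal to the smoothing-transform results of \cite{Olvera_12b} (Theorems~3.4 and 4.4 there), and the tail of $\mathcal{R}^*$ by a random-sum-plus-additive-term asymptotic, which is exactly what the paper establishes as Theorems~\ref{T.SumFatM} and \ref{T.SumFatY}. The only difference is that the random-sum estimate you treat as an off-the-shelf tool is precisely the new technical content the paper has to prove (to accommodate the additive $\mathcal{Q}_0$ term and the $IR$ hypotheses), but your anticipated statement matches what is proved there.
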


\begin{proof}
The results for the asymptotic behavior of $P(X > x)$ are a direct application of Theorems~3.4 and 4.4 in \cite{Olvera_12b}. 
The results for $P(\mathcal{R}^* > x)$ follow from Theorems~\ref{T.SumFatM} and \ref{T.SumFatY} in Section~\ref{SS.PowerLaw}.
\end{proof}

For the two random graph models we study here, we have that if either $(\mathscr{D}^- , Q)$ and $(\mathscr{D}^+, \zeta)$ are independent in the DCM or if $(W^-, Q)$ and $(W^+, \zeta)$ are independent in the IRD, then  $(\mathcal{N}, \mathcal{Q}) \stackrel{\mathcal{D}}{=} (\mathcal{N}_0, \mathcal{Q}_0)$. Furthermore, if either $P(\mathcal{N}_0 > x) \in RV(-\alpha)$ in part (a) of Theorem~\ref{T.PageRankRoot} or $P(\mathcal{Q}_0 > x) \in RV(-\alpha)$ in part (b),  then either $P(\mathcal{NC} > x) \sim E[\mathcal{C}^\alpha] P(\mathcal{N}_0 > x)$ or $P(\mathcal{QC} > x) \sim E[\mathcal{C}^\alpha] P(\mathcal{Q}_0 > x)$, respectively, as $x \to \infty$ (by Breiman's Theorem). In particular,
$$E[\mathcal{N}_0] P(X > x)  \sim E[\mathcal{N}_0] E[\mathcal{C}^\alpha] P(\mathcal{R} > x) \sim E[\mathcal{C}^\alpha] P\left( \max_{1 \leq i \leq \mathcal{N}_0}  \mathcal{R}_i > x \right), \qquad x \to \infty.$$
We can then rewrite the asymptotics for $P(\mathcal{R}^* > x)$ as:
$$P( \mathcal{R}^* > x) \sim E[\mathcal{C}^\alpha] P\left( \max_{1 \leq i \leq \mathcal{N}_0} \mathcal{R}_i > x \right) + P( \mathcal{N}_0 > x/(E[\mathcal{C}] E[\mathcal{R}])), \qquad x \to \infty,$$
in part (a) or
$$P( \mathcal{R}^* > x) \sim  E[\mathcal{C}^\alpha] P\left( \max_{1 \leq i \leq \mathcal{N}_0} \mathcal{R}_i > x \right) + P( \mathcal{Q}_0 > x), \qquad x \to \infty.$$
in part (b). These expressions can then be interpreted as follows for each of the two cases:
\begin{itemize}
\item[a.)] {\em Most likely, vertices with very high ranks have either a very highly ranked inbound neighbor, or have a very large number of (average-sized) neighbors.}
\item[b.)] {\em Most likely, vertices with very high ranks have either a very highly ranked inbound neighbor, or have a very large personalization value.}
\end{itemize}

What is interesting, is that when $\mathcal{C}$ and $(\mathcal{N}, \mathcal{Q})$ are allowed to be dependent, it is possible to disappear the contribution of the highly ranked inbound neighbor whenever 
$$P(X > x) = o\left( P(\mathcal{N}_0 > x) + P(\mathcal{Q}_0 > x) \right), \quad x \to \infty,$$
 in other words, PageRank will be mostly determined by the in-degree or personalization value of each vertex. 
 
 \begin{example} \label{E.CorrelatedRV}
Suppose that either $P(\mathscr{D}^- > x) \in RV(-\alpha)$, $\alpha > 1$, and
 $$P( \mathscr{D}^-/(\mathscr{D}^+ \vee 1) > x) = o(P(\mathscr{D}^- > x)), \qquad x \to \infty,$$
 in the DCM, or $P(W^- > x) \in RV(-\alpha)$, $\alpha > 1$, and 
 $$P( W^-/(W^+\vee 1) > x) = o(P(W^- > x)), \qquad x \to \infty,$$
 in the IRD. Then, we claim that
 $$P(\mathcal{CN} > x) = o\left( P(\mathcal{N}_0 > x) \right), \qquad x \to \infty.$$
 Similarly, by replacing $\mathscr{D}^-$ and $W^-$ with $Q$ in the conditions stated above, we claim that
 $$P(\mathcal{CQ} > x) = o\left( P(\mathcal{Q}_0 > x) \right), \qquad x \to \infty.$$
 The proof of these claims can be found in Section~\ref{SS.PowerLaw}.
 \end{example}

The dependence between $\mathcal{C}$ and $(\mathcal{N}, \mathcal{Q})$ can also introduce an important bias in the PageRank of vertices in the in-component of the randomly chosen vertex, as the following theorem illustrates. 

Note that the PageRanks of vertices encountered through the exploration of a randomly chosen vertex have the distribution of the $\{ \mathcal{R}_j\}$ in:
\begin{equation} \label{eq:RootSFPE}
\mathcal{R}^* =  \mathcal{Q}_0 + \sum_{j=1}^{\mathcal{N}_0} \mathcal{C}_j \mathcal{R}_j =  \mathcal{Q}_0 + \sum_{j=1}^{\mathcal{N}_0} X_j.
\end{equation}

\begin{theo} \label{T.PageRankNeighbors}
Let $\mathcal{R}$ denote a random variable having the same distribution as the $\{ \mathcal{R}_j\}$ in \eqref{eq:RootSFPE}. 
Fix $\alpha > 1$ and suppose $\rho_1 \vee \rho_\alpha < 1$. 
\begin{itemize}
\item[a.)] Assume $f(x) = P(\mathcal{N} > x) \in IR$,  $P(\mathcal{N C} > x) \in RV(-\alpha)$, $E[ \mathcal{Q C}] > 0$, $E[|\mathcal{Q C}|^{\alpha+\epsilon}] < \infty$ and $\rho_{\alpha+\epsilon} < \infty$, for some $\epsilon > 0$, and $P(\mathcal{Q} > x) = o(P(\mathcal{N} > x))$ as $x \to \infty$. If $E[\mathcal{N}] = \infty$ assume further that $f$ has Matuszewska indexes $\alpha(f), \beta(f)$ satisfying $-(\alpha \wedge 2) < \beta(f) \leq \alpha(f) < 0$. Then,
$$P(\mathcal{R} > x) \sim 1(E[\mathcal{N}] < \infty) E[\mathcal{N}] P( X > x) + P(\mathcal{N} > x/E[X]), \qquad x \to \infty.$$

\item[b.)] Assume $g(x) = P(\mathcal{Q} > x) \in IR$, $P(\mathcal{Q C} > x) \in RV(-\alpha)$, $E[|\mathcal{Q C}|^\beta] < \infty$ for all $0 < \beta < \alpha$,  $E[ (\mathcal{N} \mathcal{C})^{\alpha+\epsilon}] < \infty$ for some $\epsilon > 0$,  and $P(\mathcal{N} > x) = o(P(\mathcal{Q} > x))$ as $x \to \infty$.  If $E[\mathcal{N}] = \infty$ assume further that $g$ has Matuszewska indexes $\alpha(g), \beta(g)$ satisfying $-(\alpha \wedge 2) < \beta(g) \leq \alpha(g) < 0$. Then, 
$$P(\mathcal{R} > x) \sim 1(E[\mathcal{N}] < \infty) E[ \mathcal{N} ] P(X > x) + P( \mathcal{Q} > x), \qquad  x \to \infty. $$
\end{itemize}
\end{theo}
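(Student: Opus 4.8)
The plan is to reduce the statement to a tail estimate for a heavy‑tailed random sum, insert the tail of $X$ already supplied by Theorem~\ref{T.PageRankRoot}, and then appeal to the random‑sum results of Section~\ref{SS.PowerLaw} --- taking care that, unlike for $\mathcal{R}^*$, the number of summands here may have infinite mean. Running the computation of Section~\ref{SS.WeightedTree} at a non‑root node shows that for ${\bf i}\in A_m$ with $m\ge1$ one has $\mathcal{R}_{\bf i}=\mathcal{Q}_{\bf i}+\sum_{l=1}^{\mathcal{N}_{\bf i}}X_{({\bf i},l)}$, with $\{X_{({\bf i},l)}\}_{l\ge1}$ i.i.d.\ copies of the attracting endogenous solution $X$ to \eqref{eq:NewSFPE}, independent of $(\mathcal{Q}_{\bf i},\mathcal{N}_{\bf i})$. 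Hence $\mathcal{R}$, having the law of the $\{\mathcal{R}_j\}$ in \eqref{eq:RootSFPE}, satisfies
\[
\mathcal{R}\ \stackrel{\mathcal{D}}{=}\ \mathcal{Q}+\sum_{j=1}^{\mathcal{N}}X_j,\qquad \{X_j\}_{j\ge1}\ \text{i.i.d.}\sim X,\ \text{independent of }(\mathcal{Q},\mathcal{N}),
\]
i.e.\ exactly \eqref{eq:LinearCombination} with $(\mathcal{N}_0,\mathcal{Q}_0)$ replaced by $(\mathcal{N},\mathcal{Q})$; the only structural novelty is that $E[\mathcal{N}]$ may be infinite.

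Next I would record what is needed about $X$. By Theorem~\ref{T.PageRankRoot}, in case (a) $P(X>x)\sim(E[\mathcal{QC}])^\alpha(1-\rho_1)^{-\alpha}(1-\rho_\alpha)^{-1}P(\mathcal{NC}>x)\in RV(-\alpha)$, and taking expectations in \eqref{eq:NewSFPE} gives $E[X]=E[\mathcal{QC}]/(1-\rho_1)\in(0,\infty)$; in case (b) $P(X>x)\sim(1-\rho_\alpha)^{-1}P(\mathcal{QC}>x)\in RV(-\alpha)$ and $E[|X|]<\infty$. Moreover, since $\gamma\mapsto\log\rho_\gamma$ is convex (H\"older) and $\rho_1\vee\rho_\alpha<1$, we get $\rho_\gamma<1$ on an interval containing $[1,\alpha]$; combined with the stated moment hypotheses on $\mathcal{QC}$ and $\mathcal{NC}$ and the moment theory for the smoothing transform in \cite{Olvera_12b}, this yields $E[|X|^\gamma]<\infty$ for every $\gamma$ in a left‑neighbourhood of $\alpha$ (and a bit above $\alpha$ in case (a)) --- precisely the moments needed to control the fluctuations $\sum_{j\le n}(X_j-E[X])$ of the partial sums.

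It remains to compute the tail of $\mathcal{Q}+\sum_{j=1}^{\mathcal{N}}X_j$ with $P(\mathcal{N}>\cdot)=f\in IR$ (and, in case (b), $P(\mathcal{Q}>\cdot)=g\in IR$); this is the content of Theorems~\ref{T.SumFatM} and \ref{T.SumFatY} of Section~\ref{SS.PowerLaw}, applied with $(\mathcal{N},\mathcal{Q})$ in place of $(\mathcal{N}_0,\mathcal{Q}_0)$, but now in the form permitting $E[\mathcal{N}]=\infty$. When $E[\mathcal{N}]<\infty$ these give the classical big‑jump asymptotics --- $E[\mathcal{N}]P(X>x)+P(\mathcal{N}>x/E[X])$ in case (a) and $E[\mathcal{N}]P(X>x)+P(\mathcal{Q}>x)$ in case (b) --- where the extra $\mathcal{Q}$‑term, resp.\ $\mathcal{N}$‑term, is absorbed because $P(\mathcal{Q}>x)=o(P(\mathcal{N}>x))$, resp.\ $P(\mathcal{N}>x)=o(P(\mathcal{Q}>x))$, and because $f,g\in IR$ are dominatedly varying, so $P(\mathcal{N}>x/E[X])\asymp P(\mathcal{N}>x)$. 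When $E[\mathcal{N}]=\infty$, the $\mathcal{N}$‑tail (case (a)), resp.\ $\mathcal{Q}$‑tail (case (b)), dominates: a weak law of large numbers makes $\sum_{j\le n}X_j\approx nE[X]$, so the contribution of $\{\mathcal{N}>(1+\delta)x/E[X]\}$ alone is $(1-o(1))P(\mathcal{N}>x/E[X])$, while over $n\le(1-\delta)x/E[X]$ the single‑big‑jump contribution is $O(1)\cdot P(X>\delta x)\sum_{n\le(1-\delta)x/E[X]}nP(\mathcal{N}=n)=O\big(xP(X>\delta x)P(\mathcal{N}>x)\big)=o(P(\mathcal{N}>x))$, the last step using $\alpha>1$. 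Here the Matuszewska window $-(\alpha\wedge2)<\beta(f)\le\alpha(f)<0$ is exactly what is needed: $\alpha(f)<0$ (with $IR$) supplies the dominated variation behind $P(\mathcal{N}>x/E[X])\asymp P(\mathcal{N}>x)$ and behind $\sum_{n\le N}nP(\mathcal{N}=n)\asymp NP(\mathcal{N}>N)$, while $\beta(f)>-(\alpha\wedge2)\ge-\alpha$ forces $P(X>x)=o(P(\mathcal{N}>x))$ --- a function in $RV(-\alpha)$ being $o$ of anything whose lower Matuszewska index exceeds $-\alpha$ --- which is what keeps the partial‑sum fluctuations from surfacing in the limit. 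Case (b) with $E[\mathcal{N}]=\infty$ is identical, with $\mathcal{Q}$ playing the role of $\mathcal{N}E[X]$: the term $P(\mathcal{Q}>x)$ from $\mathcal{R}=\mathcal{Q}+\sum_{j\le\mathcal{N}}X_j$ gives the answer and $\sum_{j\le\mathcal{N}}X_j$ contributes only $o(P(\mathcal{Q}>x))$, again because $P(\mathcal{N}>x)=o(P(\mathcal{Q}>x))$ and $\alpha>1$.

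The main obstacle is the infinite‑mean regime in the third step, where the clean ``$E[\mathcal{N}]P(X>x)$'' bookkeeping of the one‑big‑jump principle is no longer available. One must run a Fuk--Nagaev / truncation estimate: split the sum over the number of summands $n$ into $n\le(1-\delta)x/E[X]$, $(1-\delta)x/E[X]<n\le(1+\delta)x/E[X]$, and $n>(1+\delta)x/E[X]$; on the first block dominate $P(\sum_{j\le n}X_j>x)$ by a single‑big‑jump term plus a Rosenthal--Marcinkiewicz--Zygmund remainder and sum it against $P(\mathcal{N}=n)$ using $\sum_{n\le N}nP(\mathcal{N}=n)\asymp NP(\mathcal{N}>N)$; on the last block obtain the matching lower bound from a uniform weak law of large numbers for the $\{X_j\}$; and let $\delta\downarrow0$ via $IR$. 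Verifying that every remainder is genuinely $o(P(\mathcal{N}>x))$ --- equivalently, that the Matuszewska window $-(\alpha\wedge2)<\beta(f)\le\alpha(f)<0$ is the sharp one --- is where the hypotheses of Theorem~\ref{T.PageRankNeighbors} are used to the full, and where essentially all of the technical work lies.
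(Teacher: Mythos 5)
Your proposal is correct and follows the same route as the paper: the paper's proof of Theorem~\ref{T.PageRankNeighbors} is exactly the observation that $\mathcal{R}\stackrel{\mathcal{D}}{=}\mathcal{Q}+\sum_{j=1}^{\mathcal{N}}X_j$ with the $\{X_j\}$ i.i.d.\ and independent of $(\mathcal{Q},\mathcal{N})$, combined with the tail of $X$ from Theorem~\ref{T.PageRankRoot} and an application of Theorems~\ref{T.SumFatM} and \ref{T.SumFatY} with $(M,Y)=(\mathcal{N},\mathcal{Q})$. Your final two paragraphs sketching the truncation/Fuk--Nagaev treatment of the infinite-mean regime are essentially reproving Theorems~\ref{T.SumFatM} and \ref{T.SumFatY} (which the paper states in a form already covering $E[M]=\infty$ via the Matuszewska-index hypothesis and Lemma~\ref{L.SumInfM}), so that work is not needed for this particular theorem.
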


\begin{proof}
The results are a direct consequence of the results for $P(X > x)$ in Theorem~\ref{T.PageRankRoot} combined with Theorems~\ref{T.SumFatM} and \ref{T.SumFatY} in Section~\ref{SS.PowerLaw}.
\end{proof}

\begin{remark}
As mentioned earlier, we have that if $(\mathscr{D}^-, Q)$ and $(\mathscr{D}^+, \zeta)$ are independent in the DCM, or if $(W^-, Q)$ and $(W^+, \zeta)$ are independent in the IRD, then $(\mathcal{N}, \mathcal{Q}) \stackrel{\mathcal{D}}{=} (\mathcal{N}_0, \mathcal{Q}_0)$. Moreover, one can verify that the asymptotic expressions in Theorems~\ref{T.PageRankRoot} and \ref{T.PageRankNeighbors} reduce to the known results from \cite{Chen_Lit_Olv_17} and \cite{Lee_Olv_19}, i.e., to having $\mathcal{R}^* \stackrel{\mathcal{D}}{=} \mathcal{R}$ and
$$\mathcal{R} \stackrel{\mathcal{D}}{=} \mathcal{Q} + \sum_{i=1}^{\mathcal{N}} \mathcal{C}_i \mathcal{R}_i,$$
withe $\{ \mathcal{R}_i\}$ i.i.d.~copies of $\mathcal{R}$, independent of $(\mathcal{Q}, \mathcal{N}, \{ \mathcal{C}_i\})$, and the $\{\mathcal{C}_i\}$ i.i.d.~and independent of $(\mathcal{Q}, \mathcal{N})$. In other words, the size-bias disappears.
\end{remark}

\begin{remark}
As is the case in the analysis of undirected random graphs, the size bias encountered while exploring vertices in the in-component of another vertex can cause the distribution of their PageRanks to be up to one moment heavier than that of a randomly chosen vertex. Hence, the bias can be quite significant and needs to be taken into account when analyzing samples obtained by following outbound links/arcs. 
\end{remark}

\section{Numerical Experiments} \label{S.Numerical}

In order to illustrate how the qualitative insights derived from Theorem~\ref{T.PageRankRoot} accurately describe the typical behavior of PageRank in large random graphs, we simulated a graph with $n = 10,000$ vertices and two choices of joint degree distributions, one with independent in-degree and out-degree and one with positive in-degree/out-degree correlation. 
Since this experiment is meant only to illustrate the qualitative differences between the two scenarios, we include only experiments done on an IRD (the corresponding results for the DCM where essentially identical). Although our experiments involve only the original PageRank algorithm (i.e., $C_i = d/(D_i^+ \vee 1)$ and $Q_i = 1-d$) for which the in-degree dominates the personalization value (part (a) of Theorem~\ref{T.PageRankRoot}), similar experiments could be done for the opposite case (part (b) of Theorem~\ref{T.PageRankRoot}).

In the experiments, the marginal in-degree and out-degree distributions where chosen to be the same in both scenarios, which for simplicity we chose to have Pareto {\em type} distributions, i.e., 
$$P(W^- > x) = (x/b)^{-\alpha}, \qquad x \geq b$$
for the in-degree parameter and 
$$P(W^+ > x) = (x/c)^{-\beta}, \qquad x \geq c$$
for the out-degree parameter; recall that the limiting degrees in the IRD will be mixed Poisson with parameters $E[W^+] W^-/\theta$ for the in-degree and $E[W^-] W^+/\theta$ for the out-degree, where $\theta = E[W^- + W^+]$. For the independent case, $W^-$ and $W^+$ were taken to be independent (which guarantees the independence of the corresponding in-degree and out-degree), and for the dependent case we set
$$W^+ = c(W^-/b)^{\alpha/\beta},$$
which gives a covariance between the in-degree and out-degree of:
$$\cov(Z^-, Z^+) = \frac{E[W^+] E[W^-]}{\theta^2} \cov (W^-, W^+) > 0. $$
For this choice of mixing distributions we have
$$E[Z^- ] = E[Z^+] = \frac{E[W^-] E[W^+]}{\theta} = \frac{bc \alpha \beta}{b\alpha(\beta-1) + c\beta(\alpha-1)}$$
in both cases. We set the parameters $(\alpha, b) = (1.5,8)$ and $(\beta, c) = (2.5, 12)$ to obtain an average degree $E[Z^-] = E[Z^+] = 10.91$.

To generate the graphs, we sampled $n$ i.i.d.~copies of $(W^-, W^+)$ and we used the edge probabilities:
$$p_{ij}^{(n)} = 1 \wedge  \frac{W_i^+ W_j^-}{\theta n}, \qquad 1 \leq i \neq j \leq n,$$
which corresponds to the directed Chung-Lu model. Once we had generated the two graphs (one where $(W^-, W^+)$ are independent and one where they are positively correlated), we computed their corresponding scale-free PageRank vector ${\bf R}$, as  defined via \eqref{eq:scaleFreePageRank} with $C_i = d/(D_i^+ \vee 1)$, $Q_i = 1-d$ for each $1 \leq i \leq n$, and $d = 0.85$ (a standard choice for the damping factor). To compute ${\bf R}$ we used matrix iterations, specifically, we used the approximation ${\bf R} \approx {\bf Q} \sum_{k=0}^m {\bf M}^k$ for $m = 30$.

Once all the scale-free PageRank scores had been computed, we took the top 5\% ranked vertices in each graph (for $n = 10,000$, the top 500 vertices) and used them to create the set $A$ of large PageRank vertices. Similarly, we created the set $B$ of large in-degree vertices by taking the top 5\%; in this case, since ties are not that rare, the exact number of vertices could be slightly larger than 500. Finally, we created the set $C$ by first identifying the top 5\% of rank-contributing vertices, i.e., those whose contribution $C_i R_i$ to their outbound neighbors is large, call this set $H$, and then 
selecting all the vertices who had at least one inbound neighbor in the set $H$ ($C = \{i \in \mathcal{G}_n: \text{$i$ has an inbound neighbor in $H$}\}$). Figure~\ref{F.VennDiagrams} depicts the relative sizes and the relationship among the sets $A$, $B$ and $C$ for the two cases after taking the average over 20 independent realizations of the entire experiment. The exact numerical values are given in Table~\ref{T.VennDiagramsData}. 

\begin{figure}
\centering
\begin{subfigure}[h]{.48\textwidth}
\centering
\includegraphics[scale=0.7, bb = 90 415 420 675, clip]{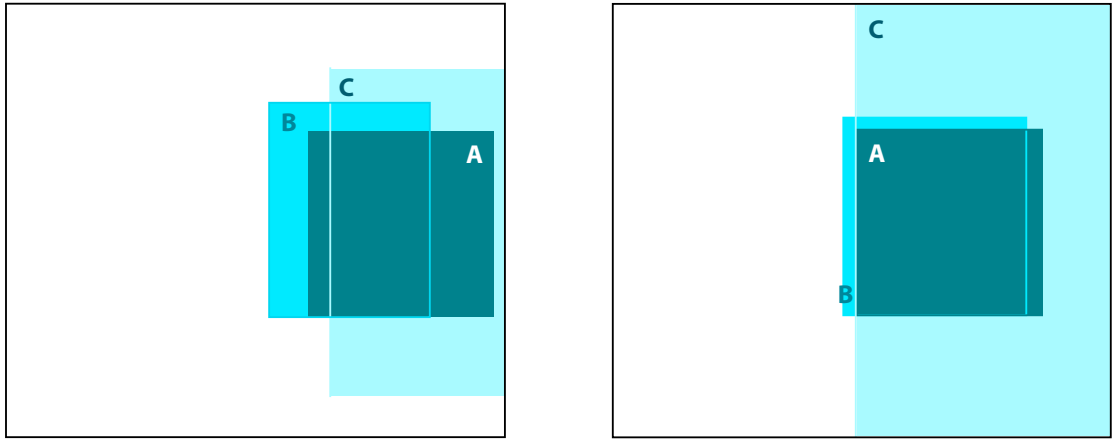}
\caption{Independent in-degree and out-degree.}
 \label{F.VennIND}
\end{subfigure}
\begin{subfigure}[h]{.48\textwidth}
\centering
\includegraphics[scale=0.7, bb = 90 110 420 370, clip]{VennDiagrams.pdf}
\caption{Dependent in-degree and out-degree.}\label{F.VennDEP}
\end{subfigure}
\caption{Qualitative relationship among the sets $A =$ \{Vertices with top 5\% PageRank scores\}, $B=$ \{Vertices with top 5\% in-degrees\}, and $C=$ \{Vertices having at least one inbound neighbor in the set $H$\}, where $H =$ \{Vertices with top 5\% values $R_i C_i$\}.}\label{F.VennDiagrams}
\end{figure}

\begin{table}[h]
\centering
\begin{tabular}[h]{| c | c | c|}
\hline & {\bf Independent case} & {\bf Dependent case} \\
{\bf Sets}  & (\%) & (\%) \\ \hline
$A \cap B \cap C$ & 3.1 & 4.59 \\
$A \cap B \cap C^c$ & 0.25  & 0.01 \\
$A^c \cap B \cap C$ & 0.82 & 0.52 \\
$A \cap B^c \cap C$ & 1.65 & 0.4 \\
$A \cap B^c \cap C^c$ & 0 & 0 \\
$A^c \cap B \cap C^c$ & 0.98 & 0.02 \\
$A^c \cap B^c \cap C$ & 16.7 & 54.39 \\
$(A \cup B \cup C)^c$ & 76.5 & 40.07 \\ \hline
$A \cap H$ & 3.43 & 1.04 \\
$A \cap H^c$ & 1.57 & 3.96 \\
$A^c \cap H$ & 1.57 & 3.96 \\ \hline
\end{tabular} 
\caption{Average percentage of vertices in the sets $A$, $B$, $C$ and $H$ after 20 independent realizations of the experiment.} \label{T.VennDiagramsData}
\end{table}

As we can see, the experiments clearly show that the relationship among the three sets $A$, $B$ and $C$ is qualitatively different between the independent and dependent cases. In both cases we have that $A \subseteq B \cup C$ (highly ranked vertices either have large in-degrees or a highly-contributing inbound neighbor), however, in the independent case we almost have equality between the sets $A$ and $B$, that is, the top ranked vertices almost coincide with the top in-degree vertices, as Theorem~\ref{T.PageRankRoot} suggests. Another important difference lies in the size of the set $C$ of vertices having a highly-contributing inbound neighbor. In the independent case, this set is relatively small (22.27\% of all the vertices) and has about 2/3 of them outside $A \cup B$  ($C \setminus (A \cup B)$ has 16.7\%); however, in the dependent case, the set $C$ is not small (59.9\% of the vertices), and it has 9/10 of its vertices outside $A \cup B$ ($C \setminus (A \cup B)$ has 54.39\%).  In other words, compared to the independent case, when the in-degree and out-degree are dependent fewer vertices having highly-contributing inbound neighbors achieve high ranks. This is explained by the observation that when the in-degree and out-degree are independent, there are highly ranked vertices with average sized out-degrees that contribute greatly to the rank of their outbound neighbors, while in the dependent case many highly ranked vertices also have large out-degrees, and their contribution to the ranks of their neighbors is largely diminished.  We can clearly see this phenomenon by looking at the last three rows of Table~\ref{T.VennDiagramsData}, which show that the intersection between the highly ranked vertices, set $A$, and those that contribute highly to the ranks of their outbound neighbors, set $H$, is very different in the two cases, with the intersection being more than three times larger in the independent case. 

To summarize, the experiments show that the predictions based on the asymptotic analysis of $P(\mathcal{R}^* > x)$ in Theorem~\ref{T.PageRankRoot} do indeed hold for the actual PageRank scores in a random graph generated through the IRD model. As mentioned earlier, similar experiments done for the DCM and for the case when $\mathcal{Q}_0$ is heavier than $\mathcal{N}_0$ show the same agreement with the theory, illustrating the valuable insights that the theory provides.

\section{Proofs} \label{S.Proofs}

To organize the proofs we have divided them into two subsections, one that includes all the results needed to prove Theorem~\ref{T.PageRank} and one that includes the proofs for all the results presented in Section~\ref{S.PowerLaw}, which are mostly related to the power-law behavior of the limiting $\mathcal{R}^*$. 

\subsection{Proof of Theorem~\ref{T.PageRank}}

The proof of Theorem~\ref{T.PageRank} is based on a coupling between the rank of a randomly chosen vertex in $\mathcal{G}_n$ and the rank of the root node of a marked (delayed) Galton-Watson process. The proof of the coupling for the IRD has been mostly done in \cite{Lee_Olv_19} under the same conditions included here (however, the analysis of the coupled tree under degree-degree dependance is new). For the DCM the existing proof in \cite{Chen_Lit_Olv_17} requires more moment conditions than those in Theorem~\ref{T.PageRank}, so for completeness we include a short proof here.

\subsubsection{Coupling the graph with a marked branching process}

The branching processes used in the coupling for the two models are slightly different. For the DCM the coupling is done with a (delayed) marked Galton-Watson process whose degree/mark distribution is determined by the extended degree sequence $\{ (D_i^-, D_i^+, Q_i, \zeta_i): 1 \leq i \leq n \}$, while for the IRD model it is determined by $\{ (W_i^-, W_i^+, Q_i, \zeta_i): 1 \leq i \leq n\}$. 

For the IRD model, the coupling result was recently done in \cite{Lee_Olv_19} (see Theorem~3.7) under the same conditions used in this paper, although only for the convergence in distribution, not the convergence in mean. Before writing down the distribution of the coupled tree we first introduce some notation. For sequences  $a_n \to \infty, b_n \to \infty$ such that $a_n b_n/n \to 0$ as $n \to \infty$, define 
$$(\bar W_i^-, \bar W_i^+) = (W_i^- \wedge a_n, W_i^+ \wedge b_n), \qquad 1 \leq i \leq n$$
and
$$\Lambda_n^- = \sum_{i=1}^n \bar W_i^-, \qquad \Lambda_n^+ = \sum_{i=1}^n \bar W_i^+.$$
The coupled (delayed) Galton-Watson process has offspring/mark joint distributions
\begin{align}
&\mathbb{P}_n\left( \hat N_\emptyset = m, \hat Q_\emptyset = q \right) = \sum_{s=1}^n 1(Q_s = q) p(m; \Lambda_n^+ \bar W_s^-/(\theta n)) \cdot \frac{1}{n} \qquad \text{and} \label{eq:RootIRDtree} \\
&\mathbb{P}_n\left( \hat N_{\bf i} = m, \hat Q_{\bf i} = q, \hat C_{\bf i} = t \right)  \notag \\
&= \sum_{s=1}^n 1(Q_s = q, c\zeta_s/t -1 \in \mathbb{N}) \cdot p(m; \Lambda_n^+ \bar W_s^-/(\theta n))\cdot p(\zeta_s/t-1; \Lambda_n^- \bar W_s^+/(\theta n)) \cdot \frac{\bar W_s^+}{\Lambda_n^+}, \qquad {\bf i} \neq \emptyset, \label{eq:NodesIRDtree}
\end{align}
for $m \in \mathbb{N}$, $q \in \mathbb{R}$, $t \in [-c, c]$, where $p(m; \lambda) = e^{-\lambda} \lambda^m/m!$ is the Poisson probability mass function with mean $\lambda$.  The coupling in \cite{Lee_Olv_19} provides an exploration of the in-component of a randomly chosen vertex in $\mathcal{G}_n$ as well as the simultaneous construction of the delayed marked tree. In particular, it defines a stopping time $\tau$ that determines the number of generations for which the coupling holds, with $\tau > k$ meaning that the two explorations are identical, including the out-degrees/marks, up to generation $k$.

The coupled (delayed) marked Galton-Watson process for the DCM has offspring/mark joint distributions
\begin{align}
\mathbb{P}_n\left( \hat N_\emptyset = m, \hat Q_\emptyset = q \right) &= \sum_{s=1}^n 1(D_s^- = m, Q_s = q)  \cdot \frac{1}{n} \qquad \text{and} \label{eq:RootDCMtree} \\
\mathbb{P}_n\left( \hat N_{\bf i} = m, \hat Q_{\bf i} = q, \hat C_{\bf i} = t \right) &= \sum_{s=1}^n 1(D_s^- = m, Q_s = q, \zeta_s/(D_s^+ \vee 1) =t) \cdot \frac{D_s^+}{L_n}, \qquad {\bf i} \neq \emptyset, \label{eq:NodesDCMtree}
\end{align}
for $m \in \mathbb{N}$, $q \in \mathbb{R}$, $t \in [-c, c]$. As for the IRD, we will show that there exists a stopping time $\tau$ such that the event $\{ \tau > k\}$ implies that the exploration of the in-component of a randomly chosen vertex in $\mathcal{G}_n$ and the exploration of the root node of the coupled tree coincides, including out-degrees/marks, up to generation $k$. 

It follows that if the in-components of the randomly chosen vertex and of the root node in the coupled tree are identical up to generation $k$, so are their generalized PageRanks computed up to that level. On the coupled trees, these $k$-level PageRanks are constructed using the notation in Section~\ref{SS.WeightedTree}. Specifically, let 
$$\hat \Pi_\emptyset \equiv 1 \qquad \text{and} \qquad \hat \Pi_{({\bf i},j)} = \hat \Pi_{\bf i}  \hat C_{({\bf i},j)}, \quad {\bf i} \in \mathcal{U}.$$
For any $k \in \mathbb{N}_+$ define the rank at level $k$ of the root node in the tree as:
\begin{equation} \label{eq:PRroot}
\hat R_\emptyset^{(n,k)} = \sum_{l=0}^k \sum_{{\bf j} \in \hat A_l} \hat \Pi_{\bf j} \hat Q_{\bf j},
\end{equation}
where the superscript $n$ refers to the dependence of the branching vectors with the size of the graph, and the superscript $k$ refers to the depth (i.e., number of generations) to which the rank is computed; $\hat A_l$ denotes the set of nodes in the $l$th generation of the tree.  The main argument behind the result in Theorem~\ref{T.PageRank} is a coupling between  $R_\xi$, the rank of a uniformly chosen vertex in $\mathcal{G}_n$, and $\hat R_\emptyset^{(n,k)}$ as defined above. Throughout the paper we use the notation $\mathbb{P}_{n,i}(\cdot) = \mathbb{P}_n( \cdot | \xi = i)$, which implies that the root of the coupled tree has the in-degree and mark corresponding to vertex $i$ in $\mathcal{G}_n$. Equivalently,
\begin{align*}
\mathbb{P}_{n,i} ( \hat N_\emptyset = m, \hat Q_\emptyset = q) &= 1( D_i^- = m, Q_i = q) \hspace{23mm} \text{for the DCM, and} \\
\mathbb{P}_{n,i} ( \hat N_\emptyset = m, \hat Q_\emptyset = q) &= p(m; \Lambda_n^+ \bar W_i^-/(\theta n)) 1( Q_i = q) \qquad \text{for the IRD}.
\end{align*}

The first step in the proof of Theorem~\ref{T.PageRank} is a result that allows us to approximate the PageRank of a randomly chosen vertex, $R_\xi$, with its PageRank computed only using the neighborhood of depth $k$ of each vertex. The exponential rate of convergence in $k$ is due to the damping factor $0 <c <1$, and the result holds for any directed graph. 

\begin{lemma} \label{L.FiniteIterations}
Define 
$${\bf R}^{(n,k)} =  (R_1^{(n,k)}, \dots, R_n^{(n,k)}) = {\bf Q} \sum_{i=0}^k {\bf M}^i,$$
where ${\bf Q} = (Q_1, \dots, Q_n)$, ${\bf M} = \text{\rm diag}({\bf C}) {\bf A}$, ${\bf C} = (C_1, \dots, C_n)$, and ${\bf A}$ is the adjacency matrix of $\mathcal{G}_n$. Then, for any $k \in \mathbb{N}_+$ and $\xi$ uniformly distributed in $\{1, 2, \dots, n\}$, we have
$$\mathbb{E}_n\left[ \left| R_\xi - R_\xi^{(n,k)} \right| \right] \leq \frac{c^{k+1}}{1-c} \cdot \mathbb{E}_n[ |\hat Q_\emptyset|].$$
\end{lemma}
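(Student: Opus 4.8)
The plan is to realize $R_\xi - R_\xi^{(n,k)}$ as a coordinate of a matrix tail series and estimate its $\ell_1$-norm. Since ${\bf R} = {\bf Q}\sum_{j=0}^\infty {\bf M}^j$ and ${\bf R}^{(n,k)} = {\bf Q}\sum_{j=0}^k {\bf M}^j$, we have ${\bf R} - {\bf R}^{(n,k)} = {\bf Q}\sum_{j=k+1}^\infty {\bf M}^j$, and because $\xi$ is uniform on $\{1,\dots,n\}$,
\begin{equation*}
\mathbb{E}_n\!\left[\bigl|R_\xi - R_\xi^{(n,k)}\bigr|\right] = \frac1n \sum_{i=1}^n \bigl|R_i - R_i^{(n,k)}\bigr| = \frac1n \norm{{\bf Q}\sum_{j=k+1}^\infty {\bf M}^j}_1 \le \frac1n \sum_{j=k+1}^\infty \norm{{\bf Q}{\bf M}^j}_1,
\end{equation*}
the last step by the triangle inequality, the series converging since $\norm{{\bf M}_{i\bullet}}_1\le c<1$ for every $i$. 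So it suffices to show $\norm{{\bf Q}{\bf M}^j}_1\le c^j\norm{{\bf Q}}_1$ for every $j\ge 0$.

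The key estimate is that multiplying a row vector on the right by ${\bf M}$ contracts its $\ell_1$-norm by the factor $c$: for any row vector ${\bf v}=(v_1,\dots,v_n)$,
\begin{equation*}
\norm{{\bf v}{\bf M}}_1 = \sum_{j=1}^n \Bigl| \sum_{i=1}^n v_i M_{ij} \Bigr| \le \sum_{i=1}^n |v_i| \sum_{j=1}^n |M_{ij}| = \sum_{i=1}^n |v_i|\,\norm{{\bf M}_{i\bullet}}_1 \le c\norm{{\bf v}}_1 ,
\end{equation*}
where interchanging the order of summation is exactly what turns the hypothesis $\norm{{\bf M}_{i\bullet}}_1\le c$ (a bound on the \emph{row} sums of ${\bf M}$) into the desired contraction. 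Iterating gives $\norm{{\bf Q}{\bf M}^j}_1\le c^j\norm{{\bf Q}}_1$, hence $\sum_{j=k+1}^\infty \norm{{\bf Q}{\bf M}^j}_1 \le \norm{{\bf Q}}_1\sum_{j=k+1}^\infty c^j = \frac{c^{k+1}}{1-c}\norm{{\bf Q}}_1$, and therefore $\mathbb{E}_n[|R_\xi - R_\xi^{(n,k)}|]\le \frac{c^{k+1}}{1-c}\cdot\frac1n\norm{{\bf Q}}_1$. Up to this point the argument uses only ${\bf Q}$ and ${\bf M}$, so it holds for an arbitrary directed graph.

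Finally I would identify $\frac1n\norm{{\bf Q}}_1 = \frac1n\sum_{i=1}^n|Q_i|$ with $\mathbb{E}_n[|\hat Q_\emptyset|]$. For the DCM this is immediate from \eqref{eq:RootDCMtree}, which gives $\hat Q_\emptyset$ the value $Q_s$ with probability $1/n$ for each $s$; for the IRD it follows from \eqref{eq:RootIRDtree} after summing the Poisson weights $p(m;\cdot)$ over $m\in\mathbb{N}$, which likewise leaves $\hat Q_\emptyset$ equal to $Q_s$ with probability $1/n$. Combining the three displays gives the claimed bound. The argument is genuinely routine; the only two points needing any care are getting the direction of the contraction inequality right (we apply ${\bf M}$ on the right of a row vector, so it is the maximal row $\ell_1$-norm of ${\bf M}$, bounded by $c$, that appears once the summation order is swapped) and checking that the marginal law of $\hat Q_\emptyset$ is the empirical distribution of $\{Q_i\}_{i=1}^n$ simultaneously for both random graph models.
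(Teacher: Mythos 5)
Your proposal is correct and follows essentially the same route as the paper: expand the tail of the Neumann series, bound $\norm{{\bf Q}{\bf M}^j}_1 \le c^j \norm{{\bf Q}}_1$ via the row-sum bound $\norm{{\bf M}_{i\bullet}}_1 \le c$ (the paper phrases this through the induced-norm identity $\norm{{\bf M}^T}_1 = \norm{{\bf M}}_\infty \le c$, you write out the summation swap explicitly), and identify $\tfrac1n\norm{{\bf Q}}_1$ with $\mathbb{E}_n[|\hat Q_\emptyset|]$. The only cosmetic difference is that your first display should keep the outer $\mathbb{E}_n$ around $\tfrac1n\norm{{\bf R}-{\bf R}^{(n,k)}}_1$ (the pairing/edge randomness is still present under $\mathbb{P}_n$), but since your bound holds pathwise and its right-hand side is $\mathscr{F}_n$-measurable, the conclusion is unaffected.
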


\begin{proof}
We start by writing the scale free generalized PageRank vector ${\bf R}$ as the solution to the system of linear equations
$${\bf R} = {\bf R} {\bf M} + {\bf Q} ,$$
where ${\bf Q} = (Q_1, \dots, Q_n)$, ${\bf M} = \text{diag}({\bf C}) {\bf A}$, ${\bf C} = (C_1, \dots, C_n)$, and ${\bf A}$ is the adjacency matrix of $\mathcal{G}_n$. Note also that
$${\bf R} =   {\bf Q}  \sum_{i=0}^\infty {\bf M}^i.$$
Next, define 
$${\bf R}^{(n,k)} =  {\bf Q}  \sum_{i=0}^k  {\bf M}^i,$$
to be the rank vector computed using only $k$ matrix iterations, and note that Minkowski's inequality gives
\begin{align*}
\| {\bf R} - {\bf R}^{(n,k)} \|_1 &\leq  \sum_{i=k+1}^\infty\norm{ {\bf Q} {\bf M}^i }_1 = \sum_{i=k+1}^\infty\norm{   ({\bf M}^T)^i   {\bf Q}^T}_1 \\
&\leq \sum_{i=k+1}^\infty  \norm{ {\bf M}^T}_1^i \norm{    {\bf Q}^T}_1 \leq \sum_{i=k+1}^\infty  \norm{ {\bf M} }_\infty^i \norm{  {\bf Q}}_1 \\
&\leq \sum_{i=k+1}^\infty c^i \norm{{\bf Q}}_1,
\end{align*}
where we used the observation that $\norm{{\bf M} }_\infty \leq c$. It follows that both for the DCM and the IRG we have
$$\| {\bf R} - {\bf R}^{(n,k)} \|_1 \leq \| {\bf Q} \|_1 (1-c)^{-1} c^{k+1},$$
where $\| {\bf x} \|_1$ is the $L_1$-norm in $\mathbb{R}^n$ and $c \in (0,1)$.  Since $R_\xi$ is a component uniformly chosen at random from the vector ${\bf R}$, we have that
\begin{align*}
 \mathbb{E}_n\left[ \left| R_\xi - R_\xi^{(n,k)} \right| \right] = \mathbb{E}_n\left[ \frac{1}{n}  \| {\bf R} - {\bf R}^{(n,k)} \|_1 \right] \leq \frac{ \| {\bf Q} \|_1}{(1-c) n} \cdot c^{k+1} = \frac{c^{k+1}}{1-c} \cdot \mathbb{E}_n[ |\hat Q_\emptyset|].
\end{align*}
\end{proof}

The next main step will be a coupling between $R_\xi^{(n,k)}$ and $\hat R_\emptyset^{(n,k)}$. Before stating the theorem, we give a preliminary technical lemma that ensures the convergence of $(\hat N_\emptyset, \hat D_\emptyset)$ and $(\hat N_{\bf i}, \hat D_{\bf i})$, ${\bf i} \neq \emptyset$ as $n \to \infty$ in the DCM. We use $d_\text{TV}(F,G)$ to denote the total variation distance between distributions $F$ and $G$. 

\begin{lemma} \label{L.TotalVar}
Define
\begin{align*}
b_n^*(i,j) &= \mathbb{P}_n( \hat N_\emptyset = i, \hat D_{\emptyset} = j) = \frac{1}{n} \sum_{s=1}^n 1(D_s^- = i, D_s^+ = j), \\
b_n(i,j) &= \mathbb{P}_n( \hat N_{\bf i} = i, \hat D_{\bf i} = j)  =  \frac{1}{L_n} \sum_{s=1}^n 1(D_s^- = i, D_s^+ = j) D_s^+, \qquad {\bf i} \neq \emptyset, \\
b^*(i,j) &= P(\mathcal{N}_{\emptyset }= i, \mathcal{D}_\emptyset = j) = P(\mathscr{D}^- = i, \mathscr{D}^+ = j), \\
b(i,j) &= P(\mathcal{N}_{\bf i} = i, \mathcal{D}_{\bf i} = j)  =  E\left[ 1(\mathscr{D}^- = i, \mathscr{D}^+ = j) \frac{\mathscr{D}^+}{E[\mathscr{D}^+]} \right], \qquad {\bf i} \neq \emptyset,
\end{align*}
and let $B_n^*, B^*, B_n, B$ denote its corresponding distribution functions. Then, under Assumption~\ref{A.Primitives}(A), we have that for any $x_n \geq 1$, 
\begin{align*}
d_\text{TV}(B_n^*, B^*) &\leq d_1(F_n, F), \\
d_\text{TV}(B_n, B) &\leq \left( 2x_n +1 + \frac{1}{E[\mathscr{D}^+]} \right) d_1(F_n, F) + 2 E[\mathscr{D}^+ 1(\mathscr{D}^+ > x_n)]. 
\end{align*}
\end{lemma}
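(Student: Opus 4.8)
The plan is to prove the two inequalities in turn: the first by a coupling argument, and the second by feeding the first into a direct total‑variation computation.

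\emph{First inequality.} I would fix a coupling $(\mathbf{X}_n,\mathbf{X})$ of $F_n$ and $F$ that is optimal (or $\varepsilon$‑optimal) for $d_1$, and write $\mathbf{X}_n=(D_n^-,D_n^+,Q_n,\zeta_n)\sim F_n$ and $\mathbf{X}=(\mathscr{D}^-,\mathscr{D}^+,Q,\zeta)\sim F$. Because $d_1(F_n,F)\to 0$ forces the first two coordinates of $F$ to be $\mathbb{N}$‑valued, $B_n^*$ and $B^*$ are laws of $\mathbb{N}^2$‑valued vectors, so $d_\text{TV}(B_n^*,B^*)\le P\bigl((D_n^-,D_n^+)\neq(\mathscr{D}^-,\mathscr{D}^+)\bigr)$; and on the event that two distinct integer pairs occur one has $|D_n^--\mathscr{D}^-|+|D_n^+-\mathscr{D}^+|\ge1$, whence this probability is at most $E\bigl[|D_n^--\mathscr{D}^-|+|D_n^+-\mathscr{D}^+|\bigr]\le E[\|\mathbf{X}_n-\mathbf{X}\|_1]$. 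Letting the coupling approach the optimum gives $d_\text{TV}(B_n^*,B^*)\le d_1(F_n,F)$.

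\emph{Second inequality.} Put $m=E[\mathscr{D}^+]$ (finite by Assumption~\ref{A.Primitives}(A)) and $m_n=L_n/n$, let $b_n^*,b^*$ be the mass functions of $B_n^*,B^*$, and observe that $b_n(i,j)=j\,b_n^*(i,j)/m_n$ while $b(i,j)=j\,b^*(i,j)/m$. The plan is to write
\[
 b_n(i,j)-b(i,j)=\frac{j\bigl(b_n^*(i,j)-b^*(i,j)\bigr)}{m}+j\,b_n^*(i,j)\Bigl(\frac1{m_n}-\frac1m\Bigr),
\]
and, summing absolute values over $(i,j)$ and using $\sum_{i,j}j\,b_n^*(i,j)=m_n$,
\[
 2\,d_\text{TV}(B_n,B)\le\frac1m\,W_n+\frac{|m-m_n|}{m},\qquad W_n:=\sum_{i,j}j\,\bigl|b_n^*(i,j)-b^*(i,j)\bigr|.
\]
The normalization term is handled at once by Kantorovich--Rubinstein duality applied to the $1$‑Lipschitz coordinate map $(d^-,d^+,q,\zeta)\mapsto d^+$, which gives $|m-m_n|=\bigl|E[\mathscr{D}^+]-\tfrac1n\sum_s D_s^+\bigr|\le d_1(F_n,F)$ and is the source of the $1/E[\mathscr{D}^+]$ term.

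\emph{Main step (the expected obstacle).} It remains to bound the weighted total variation $W_n$, and this is the delicate point: $W_n$ is not a Wasserstein distance and the weight $j$ is unbounded, so neither the first inequality nor $d_1$‑smallness controls it by itself, and the final estimate must be written through the limit $F$ (via $E[\mathscr{D}^+\mathbf{1}(\mathscr{D}^+>x_n)]$) rather than an empirical tail, which forces a transport rather than a triangle‑inequality argument. My plan is to use the elementary identity, valid for mass functions $p,q$, a nonnegative weight $g$, and any coupling $(U,V)$ of $p,q$: with $S=\{p>q\}$ and $\sigma=\mathbf{1}_S-\mathbf{1}_{S^c}$ one has $\sum_x g(x)|p(x)-q(x)|=E[g(U)\sigma(U)-g(V)\sigma(V)]$, hence
\[
 \sum_x g(x)|p(x)-q(x)|\le E\bigl[|g(U)-g(V)|\bigr]+2\,E\bigl[g(V)\,\mathbf{1}(U\neq V)\bigr].
\]
Taking $p=b_n^*$, $q=b^*$, $g(i,j)=j$, and $(U,V)=\bigl((D_n^-,D_n^+),(\mathscr{D}^-,\mathscr{D}^+)\bigr)$ from a near‑optimal $d_1$‑coupling, the first term is $E[|D_n^+-\mathscr{D}^+|]\le d_1(F_n,F)$; for the second I truncate the weight at $x_n$, using $E[\mathscr{D}^+\mathbf{1}(U\neq V)]\le x_n\,P(U\neq V)+E[\mathscr{D}^+\mathbf{1}(\mathscr{D}^+>x_n)]$ together with $P(U\neq V)\le E[|D_n^--\mathscr{D}^-|+|D_n^+-\mathscr{D}^+|]\le d_1(F_n,F)$ exactly as in the first inequality. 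This gives $W_n\le(2x_n+1)\,d_1(F_n,F)+2\,E[\mathscr{D}^+\mathbf{1}(\mathscr{D}^+>x_n)]$, and inserting it into the previous display yields the stated bound up to the precise arrangement of constants — the contributions $2x_n$, $1$ and $1/E[\mathscr{D}^+]$ tracing respectively to the truncated weight, the passage from the empirical to the limiting mean and tail, and the normalization correction.
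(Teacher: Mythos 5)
Your proposal is correct, and it reproduces the paper's overall architecture: the first inequality via an optimal coupling and the integer-valuedness bound $1(U\neq V)\le \|U-V\|_1$, and the second via the same decomposition $b_n-b=\tfrac{j}{m}(b_n^*-b^*)+jb_n^*(\tfrac1{m_n}-\tfrac1m)$ into a normalization term (controlled by the $1$-Lipschitz coordinate map) and the weighted total variation $W_n=\sum_{i,j}j|b_n^*(i,j)-b^*(i,j)|$. Where you genuinely diverge is in bounding $W_n$. The paper splits the sum at $j\le x_n$ versus $j>x_n$, bounds the light part by $x_n\,d_{\text{TV}}(B_n^*,B^*)$, and then must convert the \emph{empirical} truncated mean $\mathbb{E}_n[\hat D_\emptyset 1(\hat D_\emptyset>x_n)]$ into the limiting one via the identity $y1(y>x_n)=(y-x_n)^++x_n1(y>x_n)$ combined with a Lipschitz bound on $(y-x_n)^+$ and a second total-variation bound. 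Your coupling identity $\sum_x g(x)|p(x)-q(x)|\le E[|g(U)-g(V)|]+2E[g(V)1(U\neq V)]$ does all of this in one stroke: the truncation is applied directly to the limit variable $\mathscr{D}^+$, so the tail term appears only through $F$ with no empirical-to-limit conversion needed, and you land on exactly the paper's intermediate bound $W_n\le(2x_n+1)d_1(F_n,F)+2E[\mathscr{D}^+1(\mathscr{D}^+>x_n)]$. This is arguably cleaner and more transparent about where each constant comes from. One caveat on the ``arrangement of constants'': your assembly carries the prefactor $1/E[\mathscr{D}^+]$ (and a factor $\tfrac12$ from the symmetric TV convention) on the $W_n$ term, so your final bound matches the stated one only when $E[\mathscr{D}^+]\ge\tfrac12$; but the paper's own derivation produces the same $1/E[\mathscr{D}^+]$ prefactor and silently drops it, so this discrepancy is inherited from the statement rather than a flaw in your argument, and it is immaterial for the lemma's use (showing $d_{\text{TV}}(B_n,B)\xrightarrow{P}0$).
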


\begin{proof}
Note that by Markov's inequality we have
\begin{align*}
d_\text{TV}(B_n^*, B^*) &= \inf_\pi \mathbb{P}_n \left( (\hat N_\emptyset, \hat D_\emptyset) \neq (\mathscr{D}^-, \mathscr{D}^+) \right) \leq \inf_\pi \mathbb{E}_n\left[ |\hat N_\emptyset - \mathscr{D}^- | + | \hat D_\emptyset - \mathscr{D}^+ | \right] \\
&\leq d_1(B_n^*, B^*) \leq d_1(F_n, F),
\end{align*}
where the infimum is taken over all distributions $\pi$ with marginals $B_n^*$ and $B^*$ and $d_1$ denotes the Kantorovich-Rubinstein distance. Bounding the total variation distance between $B_n$ and $B$ requires more work, since under our current assumptions $d_1(B_n, B)$ may be infinite. To start, let $(\hat N_\emptyset, \hat D_\emptyset, \mathscr{D}^-, \mathscr{D}^+)$ be an optimal coupling for which $d_1(B_n^*, B^*)$ is attained, and note that:
\begin{align*}
d_\text{TV}(B_n, B) &= \sum_{i=0}^n \sum_{j=1}^\infty  \left|  b_n(i,j) - b(i,j)  \right| \\
&= \sum_{i=0}^n \sum_{j=1}^\infty \left| \mathbb{E}_n \left[ 1(\hat N_\emptyset = i, \hat D_\emptyset = j) \frac{n \hat D_\emptyset}{L_n} \right] - E\left[ 1(\mathscr{D}^- =i, \mathscr{D}^+ = j) \frac{\mathscr{D}^+}{E[\mathscr{D}^+]} \right] \right| \\
&\leq \sum_{i=0}^n \sum_{j=1}^\infty \mathbb{E}_n \left[ 1(\hat N_\emptyset = i, \hat D_\emptyset = j) \hat D_\emptyset \right] \left| \frac{n}{L_n} - \frac{1}{E[\mathscr{D}^+]}   \right| \\
&\hspace{5mm} + \frac{1}{E[\mathscr{D}^+]}  \sum_{i=0}^n \sum_{j=1}^\infty \left| \mathbb{E}_n \left[ 1(\hat N_\emptyset = i, \hat D_\emptyset = j) \hat D_\emptyset \right] - E\left[ 1(\mathscr{D}^- =i, \mathscr{D}^+ = j) \mathscr{D}^+ \right] \right| \\
&= \frac{L_n}{n} \left| \frac{n}{L_n} - \frac{1}{E[\mathscr{D}^+]}   \right| + \sum_{i=0}^n \sum_{j=1}^\infty j \left| b_n^*(i,j) - b^*(i,j)  \right|.
\end{align*}
Now note that
$$ \frac{L_n}{n} \left| \frac{n}{L_n} - \frac{1}{E[\mathscr{D}^+]}   \right| = \frac{1}{E[\mathscr{D}^+]} \left| E[\mathscr{D}^+] - \mathbb{E}_n[ \hat D_\emptyset] \right| \leq \frac{1}{E[\mathscr{D}^+]} \cdot d_1(F_n, F),$$
and for any $x_n \geq 1$ we have
\begin{align*}
\sum_{i=0}^n \sum_{j=1}^\infty j \left| b_n^*(i,j) - b^*(i,j)  \right| &\leq \sum_{i=0}^\infty \sum_{j=1}^{\lfloor x_n \rfloor} x_n \left|b_n^*(i,j) - b^*(i,j) \right| + \sum_{i=0}^\infty \sum_{j=\lfloor x_n \rfloor +1} j (b_n^*(i,j) + b^*(i,j)) \\
&\leq x_n d_\text{TV}(B_n^*, B^*) + \mathbb{E}_n[ \hat D_\emptyset 1( \hat D_\emptyset > x_n) ] + E[ \mathscr{D}^+ 1(\mathscr{D}^+ > x_n) ] \\
&\leq (2x_n+1) d_1(F_n, F) + 2 E[ \mathscr{D}^+ 1(\mathscr{D}^+ > x_n)],
\end{align*}
where in the last step we used the observation that
\begin{align*}
\mathbb{E}_n[ \hat D_\emptyset 1( \hat D_\emptyset > x_n) ] &= \mathbb{E}_n[ (\hat D_\emptyset - x_n)^+] + x_n \mathbb{P}_n(\hat D_\emptyset > x_n) \\
&\leq d_1(B_n^*, B) + E[ (\mathscr{D}^+ - x_n)^+] + x_n d_\text{TV}(B_n^*, B^*) + x_n P(\mathscr{D}^+ > x_n) \\
&\leq (x_n+1) d_1(F_n, F) + E[ \mathscr{D}^+ 1(\mathscr{D}^+ > x_n)]. 
\end{align*}
It follows that 
$$d_\text{TV}(B_n, B) \leq \left( 2 x_n + 1 + \frac{1}{E[\mathscr{D}^+]} \right) d_1(F_n, F) + 2 E[ \mathscr{D}^+ 1(\mathscr{D}^+ > x_n)].$$
\end{proof}

\bigskip

We are now ready to state our coupling result for $R_\xi^{(n,k)}$ and $\hat R_\emptyset^{(n,k)}$. 

\begin{theo} \label{T.CouplingTime}
Under Assumption~\ref{A.Primitives}, there exists a coupling $(R_\xi^{(n,k)}, \hat R_\emptyset^{(n,k)})$, where $R_\xi^{(n,k)}$ is defined as in Lemma~\ref{L.FiniteIterations} and  $\hat R_\emptyset^{(n,k)}$ is constructed according to \eqref{eq:PRroot}, and a stopping time $\tau$, such that for any fixed $k \in\mathbb{N}_+$,
$$\mathbb{P}_n\left( R_\xi^{(n,k)} \neq \hat R_\emptyset^{(n,k)} \right) \leq \frac{1}{n} \sum_{i=1}^n \mathbb{P}_{n,i}( \tau \leq k) \xrightarrow{P} 0, \qquad n \to \infty.$$
\end{theo}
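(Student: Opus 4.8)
The plan is to treat the two models separately. For the IRD, the required coupling, the stopping time $\tau$, and the bound $\mathbb{P}_n(R_\xi^{(n,k)}\neq\hat R_\emptyset^{(n,k)})\le n^{-1}\sum_i\mathbb{P}_{n,i}(\tau\le k)$, together with $n^{-1}\sum_i\mathbb{P}_{n,i}(\tau\le k)\xrightarrow{P}0$, are exactly what Theorem~3.7 of \cite{Lee_Olv_19} provides, and its hypotheses coincide with Assumption~\ref{A.Primitives}(B) (including the Poissonization and the truncations $W_i^\pm\wedge a_n$, $W_i^\pm\wedge b_n$); so for the IRD nothing needs to be redone. The substance is the DCM, for which I would give a self-contained exploration argument using only the first-moment hypotheses of Assumption~\ref{A.Primitives}(A), unlike \cite{Chen_Lit_Olv_17}.

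For the DCM I would couple, on one probability space, the breadth-first exploration of the in-component of a uniformly chosen vertex $\xi$ with the marked delayed Galton--Watson tree whose offspring/mark laws are \eqref{eq:RootDCMtree}--\eqref{eq:NodesDCMtree}. The root of the tree is $\xi$ itself, so generation $0$ matches trivially. At each later pairing step the graph joins an unpaired inbound half-edge to a uniformly chosen unpaired outbound half-edge, whose owner becomes the next in-neighbour; I would couple this choice with the identity of the next tree node (a vertex drawn with probability proportional to its out-degree, from which $(\hat N,\hat D,\hat Q,\hat C)$ is read off) by a maximal coupling, so that the two agree unless the graph connects to an already-discovered vertex. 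Let $\tau$ be the first generation at which this happens. On $\{\tau>k\}$ the depth-$k$ in-component of $\xi$ is a tree with pairwise distinct vertices that agrees edge by edge and mark by mark with the depth-$k$ truncation of the constructed tree, so comparing the definition of $R_\xi^{(n,k)}$ in Lemma~\ref{L.FiniteIterations} with \eqref{eq:PRroot} gives $R_\xi^{(n,k)}=\hat R_\emptyset^{(n,k)}$, i.e.\ the first inequality in the statement.

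To bound $n^{-1}\sum_i\mathbb{P}_{n,i}(\tau\le k)$, let $\tilde T_k$ be the number of inbound half-edges carried by generations $0,\dots,k-1$ of the tree (the number of pairings needed to reach depth $k$), and let $\tilde S_k$ be the sum of the out-degrees of its first $k+1$ generations; both are functions of the degree sequence alone. Conditionally on $\mathscr F_n$ and on the exploration so far, the probability that a given pairing connects to an already-discovered vertex is (out-degree sum of the discovered vertices)$/L_n\le \tilde S_k/L_n$, so a union bound over the at most $\tilde T_k$ pairings that precede depth $k$ gives, for any deterministic $m_n,M_n$,
\begin{equation*}
\frac1n\sum_{i=1}^n\mathbb{P}_{n,i}(\tau\le k)\ \le\ \frac{m_n M_n}{L_n}\ +\ \mathbb{P}_n\big(\tilde T_k^{(\xi)}>m_n\big)\ +\ \mathbb{P}_n\big(\tilde S_k^{(\xi)}>M_n\big),
\end{equation*}
where $\xi$ is uniform and the last two terms are $\mathscr F_n$-measurable with unconditional expectations $P(\tilde T_k^{(\xi)}>m_n)$ and $P(\tilde S_k^{(\xi)}>M_n)$. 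Since $d_1(F_n,F)\xrightarrow{P}0$ forces $L_n/n\xrightarrow{P}E[\mathscr D^+]\in(0,\infty)$, choosing $m_n,M_n\to\infty$ with $m_nM_n=o(n)$ (e.g.\ $n^{1/3}$) makes the first term vanish, and the other two vanish once $\tilde T_k^{(\xi)}$ and $\tilde S_k^{(\xi)}$ are shown to be tight in $n$. Tightness follows from Lemma~\ref{L.TotalVar}, which yields total-variation convergence (in probability, after the usual diagonal choice of its truncation level $x_n$) of the one-step offspring/degree law of the tree, and then an induction over the finitely many generations gives convergence in distribution of $\tilde T_k^{(\xi)}$ and $\tilde S_k^{(\xi)}$ to the analogous quantities on the limiting tree, which are almost surely finite. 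Hence the right-hand side tends to $0$ in $L^1$, and therefore in probability.

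The main obstacle is precisely this last tightness step under the weak hypotheses of Assumption~\ref{A.Primitives}(A): the tree's offspring law has mean $E[\mathscr D^-\mathscr D^+]/E[\mathscr D^+]$ and its out-degrees have mean $E[(\mathscr D^+)^2]/E[\mathscr D^+]$, both of which may be infinite — as the Remark after Theorem~\ref{T.PageRank} notes, the underlying branching process may have infinite-mean offspring — so $\tilde T_k^{(\xi)}$ and $\tilde S_k^{(\xi)}$ cannot be controlled through their means. What rescues the argument is that a fixed, finite number of generations of a Galton--Watson tree is almost surely finite regardless of the offspring mean, so tightness of the depth-$k$ neighbourhood is a purely distributional fact, delivered by Lemma~\ref{L.TotalVar} together with the generation-by-generation induction and requiring nothing beyond $E[\mathscr D^-+\mathscr D^+]<\infty$.
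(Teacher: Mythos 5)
Your proposal is correct and follows essentially the same route as the paper: the IRD case is delegated to Theorem~3.7 of \cite{Lee_Olv_19} (the paper only adds the bookkeeping remark that its $\{\tau>k\}$ corresponds to $\{\tau>2k\}$ there), and the DCM case is handled by the same exploration/collision union bound --- collision probability at most the discovered out-degree mass over $L_n$, truncated on the event that the depth-$k$ tree is not too large --- followed by tightness of the truncated tree quantities obtained from Lemma~\ref{L.TotalVar}. The only difference is presentational: the paper makes the tightness step quantitative by invoking the explicit coupling bound of Lemma~4.6 in \cite{Lee_Olv_19} and then choosing a specific random truncation level $a_n$ as a function of $d_1(F_n,F)$, whereas you argue it softly through weak convergence and deterministic truncation levels $m_n,M_n$ with $m_nM_n=o(n)$; both arguments are valid and rest on the same key observation that finitely many generations of the limiting tree are a.s.\ finite even when the offspring mean is infinite.
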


\begin{proof}
We consider the two models separately.

{\bf Inhomogeneous random digraph:}

For the IRD, the statement of the theorem is that of Theorem~3.7 in \cite{Lee_Olv_19}, provided we redefine the stopping time so that $\{ \tau > k\}$ in this paper corresponds to $\{ \tau > 2k\}$ in \cite{Lee_Olv_19}, since the coupling described there consists of odd and even steps and is such that generation $k$ is completed after $2k$ steps of the exploration process. 

{\bf Directed configuration model:}

The proof for the DCM requires that we modify the proof of Lemma~5.4 in \cite{Chen_Lit_Olv_17} to avoid the stronger moment conditions assumed there. The exploration of the graph and the simultaneous construction of the marked tree is the same as in \cite{Chen_Lit_Olv_17}, that is, we perform a breadth-first exploration of the in-component of the randomly chosen vertex by selecting uniformly at random from all $L_n$ outbound half-edges, rejecting those that have been selected earlier; we construct the coupled tree by using the in-degree of the vertex whose outbound half-edge has been selected as the number of offspring for the node being explored, and record its out-degree, weight and personalization value as its mark. The coupling breaks the first time we draw an outbound half-edge belonging to a vertex that has already been been explored. It follows that the probability that the coupling breaks while pairing a vertex at distance $r$ from the randomly chosen vertex, is bounded from above by:
$$P_r := \frac{1}{L_n} \sum_{j=0}^r \hat V_j, \qquad r \geq 0,$$
where $\hat V_j =  \sum_{{\bf i} \in \hat A_j} \hat D^+_{\bf i}$ is the sum of the out-degrees (which we include as part of the marks) of the nodes in the $j$th generation of the marked tree (denoted $\hat A_j$). Let $\hat Z_r = |\hat A_r|$ denote the number of individuals in the $r$th generation of the tree and define $|\hat T_k| = \sum_{r=0}^k \hat Z_r$, $|\hat I_k| = \sum_{r=0}^k \hat V_r$. Next, let $a_n \geq 1$ be a sequence to be determined later, and note that 
\begin{align*}
\mathbb{P}_{n,i}(\tau \leq k) &\leq \mathbb{P}_{n,i}(\tau \leq k, |\hat T_k| \vee |\hat I_k| \leq a_n) +  \mathbb{P}_{n,i} \left( |\hat T_k| \vee |\hat I_k| > a_n \right) \\
&\leq \sum_{r=1}^k \mathbb{P}_{n,i}\left(\tau = r, |\hat T_k| \vee |\hat I_k| \leq a_n \right) + \mathbb{P}_{n,i} \left( |\hat T_k| \vee |\hat I_k| > a_n \right) \\
 & \leq \sum_{r=1}^k \mathbb{P}_{n,i} \left( \text{Bin}(\hat Z_{r}, P_r) \geq 1,  |\hat T_r| \vee |\hat I_r| \leq a_n \right) + \mathbb{P}_{n,i} \left( |\hat T_k| \vee |\hat I_k| > a_n \right) \\
 &\leq \sum_{r=1}^k \mathbb{P}_n\left( \text{Bin}( a_n, a_n/L_n) \geq 1 \right) + \mathbb{P}_{n,i} \left( |\hat T_k| \vee |\hat I_k| > a_n \right) \\
  &\leq \frac{k a_n^2}{L_n} + \mathbb{P}_{n,i} \left( |\hat T_k| \vee |\hat I_k| > a_n \right),
\end{align*}
where $\text{Bin}(n,p)$ is a Binomial random variable with parameters $(n,p)$. To analyze the last probability we use Lemma~4.6 in \cite{Lee_Olv_19} to couple the marked Galton-Watson process constructed using $\{ (\hat N_{\bf i}, \hat D_{\bf i}): {\bf i} \in \mathcal{U} \}$ with another marked Galton-Watson process constructed using $\{ (\mathcal{N}_{\bf i}, \mathcal{D}_{\bf i}): {\bf i} \in \mathcal{U}\}$ (the latter does not depend on $\mathscr{F}_n$). In particular, if we define $B_n^*, B^*, B_n, B$ as in Lemma~\ref{L.TotalVar}, then Lemma~4.6 in \cite{Lee_Olv_19} (see the last line of the proof) gives
\begin{align*}
\frac{1}{n} \sum_{i=1}^n \mathbb{P}_{n,i} \left( |\hat T_k| \vee |\hat I_k| > a_n \right) &\leq P\left( |\mathcal{T}_k| \vee |\mathcal{I}_k| > a_n \right) + d_\text{TV}(B_n^*, B^*) + k a_n d_\text{TV}(B_n, B) ,
\end{align*}
where $|\mathcal{T}_k| := \sum_{r=0}^k |A_r|$ and $|\mathcal{I}_k| := \sum_{r=0}^k \sum_{{\bf i} \in A_r} \mathcal{D}_{\bf i}$, $A_r$ is the $r$th generation of the coupled tree, and $d_\text{TV}(F,G)$ is the total variation distance between distributions $F$ and $G$. Moreover, by Lemma~\ref{L.TotalVar} we have 
\begin{align*}
d_\text{TV}(B_n^*, B^*) &\leq d_1(F_n, F), \quad \text{and} \\
d_\text{TV}(B_n, B) &\leq \left( 2 x_n + 1 + \frac{1}{E[\mathscr{D}^+]} \right) d_1(F_n, F) + 2 E[ \mathscr{D}^+ 1(\mathscr{D}^+ > x_n)],
\end{align*}
for any $x_n \geq 1$. These in turn yield
\begin{align*}
\frac{1}{n} \sum_{i=1}^n \mathbb{P}_{n,i}(\tau \leq k) &\leq \frac{ka_n^2}{L_n} + P(|\mathcal{T}_k| \vee |\mathcal{I}_k| > a_n) + d_1 (F_n, F) \\
&\hspace{5mm} + k a_n \left\{ \left( 2 x_n + 1 + \frac{1}{E[\mathscr{D}^+]} \right) d_1(F_n, F) + 2 E[ \mathscr{D}^+ 1(\mathscr{D}^+ > x_n)] \right\}.
\end{align*}
Finally, pick $x_n = d_1(F_n, F)^{-1/4}$ and $a_n = \min\{ d_1(F_n, F)^{-1/4}, E[\mathscr{D}^+ 1(\mathscr{D}^+ > x_n)]^{-1/2}, n^{1/4}\}$, and use Assumption~\ref{A.Primitives} to obtain that $a_n, x_n \xrightarrow{P} \infty$. Since $|\mathcal{T}_k| \vee |\mathcal{I}_k| < \infty$ a.s., we obtain 
$$\frac{1}{n} \sum_{i=1}^n \mathbb{P}_{n,i}(\tau \leq k) \xrightarrow{P} 0, \qquad n \to \infty.$$
This completes the proof. 
\end{proof}

Before showing how Theorem~\ref{T.CouplingTime} can be used to obtain a coupling between $R_\xi^{(n,k)}$ and $\hat R_\emptyset^{(n,k)}$ in $L_1$ norm, we first show that $\hat R_\emptyset^{(n,k)}$ converges in $d_1$ as $n \to \infty$.  The following lemma is the key to establish this convergence, which will then follow from Theorem~3 in \cite{chenolvera1}. Although similar results appear in \cite{Chen_Lit_Olv_17} and \cite{Lee_Olv_19}, they depend on the convergence in $d_1$ of the vector $(\hat N_1, \hat Q_1, \hat C_1)$ to $(\mathcal{N}, \mathcal{Q}, \mathcal{C})$, which is not guaranteed under our current assumptions; in fact, it is possible to have $E[\mathcal{N} + |\mathcal{Q}|] = \infty$, which would imply that $d_1$ is not even well defined. 

\begin{lemma} \label{L.KRconvergence}
Let $\nu_n$ denote the probability measure of the vector
$$ ( \hat C_1 \hat Q_1,  \hat C_1 1(\hat N_1 \geq 1),  \hat C_1 1(\hat N_1 \geq 2), \dots),$$
and let $\nu$ denote the probability measure of the vector
$$( \mathcal{CQ}, \mathcal{C} 1(\mathcal{N} \geq 1), \mathcal{C} 1(\mathcal{N} \geq 2), \dots).$$
Then, under Assumption~\ref{A.Primitives}, we have that, for both the DCM and the IRD, 
$$d_1(\nu_n, \nu) \stackrel{P}{\to} 0, \qquad n \to \infty.$$
\end{lemma}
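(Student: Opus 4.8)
The plan is to prove $d_1(\nu_n,\nu)\xrightarrow{P}0$, where $d_1$ is the Kantorovich--Rubinstein distance on the space of real sequences with norm $\|\mathbf s\|_1=\sum_i|s_i|$, by working through the dual formulation $d_1(\nu_n,\nu)=\sup_g|E_{\nu_n}[g]-E_\nu[g]|$ over $1$-Lipschitz $g$, which we may take to satisfy $g(\mathbf 0)=0$ (so $|g(\mathbf s)|\le\|\mathbf s\|_1$) since $\nu_n,\nu$ are probability measures. Two structural facts drive everything. First, $|\mathcal C|\le c<1$ in both models. Second --- and this is what lets us dispense with second moments of the degrees --- the size-biasing weight cancels the factor hidden inside $\mathcal C$: for the DCM write $\Phi(\mathbf d)=\bigl(\tfrac{\zeta}{d^+\vee1}q,\ \tfrac{\zeta}{d^+\vee1}1(d^-\ge1),\ \tfrac{\zeta}{d^+\vee1}1(d^-\ge2),\dots\bigr)$ for $\mathbf d=(d^-,d^+,q,\zeta)$, so that $\nu_n=\mu_n^+\circ\Phi^{-1}$ and $\nu=\mu^+\circ\Phi^{-1}$ with $\mu_n^+=L_n^{-1}\sum_{s=1}^nD_s^+\,\delta_{\mathbf D_s}$ and $\mu^+(d\mathbf d)=E[\mathscr D^+]^{-1}d^+F(d\mathbf d)$ the out-degree size-biased empirical and limiting laws. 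Since $\|\Phi(\mathbf d)\|_1=\tfrac{|\zeta|}{d^+\vee1}(|q|+d^-)$, the function $\psi(\mathbf d):=d^+g(\Phi(\mathbf d))$ obeys $|\psi(\mathbf d)|\le d^+\|\Phi(\mathbf d)\|_1\le c(|q|+d^-)$, which is $F$-integrable even when $E[\mathscr D^-\mathscr D^+]=E[\mathscr D^+|Q|]=\infty$; in particular $E_\nu[\|\cdot\|_1]<\infty$, so $d_1(\nu_n,\nu)$ is well defined.

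For the DCM I would then write $E_{\nu_n}[g]=\tfrac n{L_n}\cdot\tfrac1n\sum_s\psi(\mathbf D_s)$ and $E_\nu[g]=E[\mathscr D^+]^{-1}E_F[\psi]$ and split the difference into (i) a normalization term bounded by $\bigl|\tfrac n{L_n}-\tfrac1{E[\mathscr D^+]}\bigr|\le E[\mathscr D^+]^{-1}d_1(F_n,F)\xrightarrow{P}0$ (exactly as in Lemma~\ref{L.TotalVar}), and (ii) $\tfrac1n\sum_s\psi(\mathbf D_s)-E_F[\psi]$. For (ii) I introduce a common truncation level $x_n\to\infty$ growing slowly relative to $d_1(F_n,F)^{-1}$ (as in Theorem~\ref{T.CouplingTime}) and replace $\mathscr D^-,\mathscr D^+,|Q|$ by their truncations at $x_n$ inside $\psi$. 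On the truncated range the elementary expansion
$$\|\Phi(\mathbf d)-\Phi(\tilde{\mathbf d})\|_1\le|a|\,|q-\tilde q|+(|\tilde q|+d^-\wedge\tilde d^-)\,|a-\tilde a|+(|a|\vee|\tilde a|)\,|d^--\tilde d^-|,\quad a=\tfrac{\zeta}{d^+\vee1},\ \tilde a=\tfrac{\tilde\zeta}{\tilde d^+\vee1},$$
shows that the truncated $\psi$ is $\ell_1$-Lipschitz with a constant polynomial in $x_n$, so $d_1(F_n,F)\xrightarrow{P}0$ makes the corresponding empirical averages converge up to an error polynomial in $x_n$ times $d_1(F_n,F)$, which $\xrightarrow{P}0$; the truncation error is bounded by an integrable envelope of the form $c(|q|+d^-)\bigl(1(d^->x_n)+1(d^+>x_n)+1(|q|>x_n)\bigr)$, whose empirical average vanishes because $d_1(F_n,F)\to0$ forces $E_{F_n}[|Q|+\mathscr D^-]\to E_F[|Q|+\mathscr D^-]$, hence uniform integrability of $\{|Q_s|+D_s^-\}$ along the sequence and vanishing of the tail uniformly over the slowly growing $x_n$ (a routine subsequence argument converts these deterministic estimates into convergence in probability). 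Adding the pieces yields $d_1(\nu_n,\nu)\xrightarrow{P}0$ for the DCM.

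For the IRD the branching vector is assembled from the mixed-Poisson offspring and marks of \eqref{eq:NodesIRDtree}, and I would run the same template on top of the type size-biased, mixed-Poisson coupling already established in \cite{Lee_Olv_19}: the role of the integrable envelope is played by $\tfrac{W^+}{Z^++1}(|Q|+Z^-)\le\tfrac{\theta}{E[W^-]}(|Q|+Z^-)$, using $E[(Z^++1)^{-1}\mid W^+]=(1-e^{-\lambda^+})/\lambda^+$ with $\lambda^+=E[W^-]W^+/\theta$ and the conditional independence of $Z^-,Z^+$ given $(W^-,W^+)$, after which the truncations $a_n,b_n\to\infty$ built into \eqref{eq:NodesIRDtree} together with the $\ell_1$-expansion above (now with $a=\zeta/(Z^++1)$ and $Z^-,Z^+$ conditionally Poisson) give the claim.

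I expect the DCM estimate to be the main obstacle, and it is really the whole content of the lemma: naively, size-biasing by $\mathscr D^+$ is not $d_1$-continuous and its control would demand $E[\mathscr D^-\mathscr D^+]<\infty$, which we do not assume. The delicate point is to exploit the exact cancellation that turns $\psi=d^+g(\Phi(\cdot))$ into an object with an integrable envelope, and then to tune the common truncation level $x_n$ on the diagonal so that the ``bulk'' error (polynomial in $x_n$ times $d_1(F_n,F)$) and the ``tail'' error $E[(\mathscr D^-+|Q|)(1(\mathscr D^->x_n)+1(\mathscr D^+>x_n)+1(|Q|>x_n))]$ vanish simultaneously.
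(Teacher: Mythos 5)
Your argument is correct, and it reaches the conclusion by a genuinely different route than the paper. You work directly with the Kantorovich--Rubinstein dual, bounding $\sup_g|E_{\nu_n}[g]-E_\nu[g]|$ uniformly over $1$-Lipschitz $g$ via the exact cancellation $\mathscr{D}^+/(\mathscr{D}^+\vee1)\le1$ (so that $\psi_g=d^+g(\Phi(\cdot))$ has the integrable envelope $c(|q|+d^-)$), a diagonal truncation $x_n$ tuned against $d_1(F_n,F)$, and uniform integrability of $|Q|+\mathscr{D}^-$ under $F_n$ (which, as you note, follows from $d_1(F_n,F)\to0$ since $(q,d)\mapsto(|q|+d-M)^+$ is $1$-Lipschitz). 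The paper instead proceeds in two soft steps: it first proves weak convergence of $\nu_n$ to $\nu$ in $(\mathbb{R}^\infty,\|\cdot\|_1)$ by testing against bounded continuous $h$, taming the unbounded size-biasing weight by capping it at a fixed level $M$ (sent to infinity after $n$, with the error $(\mathscr{D}^+-M)^+$ controlled because it is Lipschitz); it then separately proves convergence of the first absolute moments $E_{\nu_n}[\|\cdot\|_1]\to E_\nu[\|\cdot\|_1]$ using an optimal coupling for $d_1(F_n,F)$ and the same cancellation; finally it invokes the standard characterization (Villani, Theorem~6.9) that weak convergence plus first-moment convergence is equivalent to $d_1$-convergence. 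Your route buys a quantitative estimate (an explicit error of the form ``polynomial in $x_n$ times $d_1(F_n,F)$ plus a tail term'') at the price of having to verify that the truncated $\psi_g$ is Lipschitz uniformly in $g$ and of the subsequence bookkeeping needed for the random truncation level; the paper's route avoids all uniform-in-$g$ estimates but is purely qualitative. The two proofs share their essential content: the size-biasing weight is exactly absorbed by the $1/(\mathscr{D}^+\vee1)$ hidden in $\mathcal{C}$ (respectively, $W^+E[(Z^++1)^{-1}\mid W^+]\le\theta/E[W^-]$ for the IRD), which is what makes the lemma true without assuming $E[\mathscr{D}^-\mathscr{D}^+]<\infty$ or $E[|Q|\mathscr{D}^+]<\infty$. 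Your IRD paragraph is only a sketch, but it identifies the correct envelope and the correct reduction to the mixed-Poisson law, matching the paper's reduction via $d_1(\tilde F_n,\tilde F)\to0$.
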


\begin{proof}
As before, we consider the two models separately. 

{\bf Directed configuration model:}

Let $(D_{(n)}^-, D_{(n)}^+, Q_{(n)}, \zeta_{(n)})$ be a vector distributed according to $F_n$ and let $(\mathscr{D}^-, \mathscr{D}^+, Q, \zeta)$ be distributed according to $F$. We will first show that
$$ ( \hat C_1 \hat Q_1,  \hat C_1 1(\hat N_1 \geq 1),  \hat C_1 1(\hat N_1 \geq 2), \dots) \Rightarrow ( \mathcal{CQ}, \mathcal{C} 1(\mathcal{N} \geq 1), \mathcal{C} 1(\mathcal{N} \geq 2), \dots),$$
as $n \to \infty$, where $\Rightarrow$ denotes weak convergence. To this end, let $h: \mathbb{R}^\infty \to \mathbb{R}$ be a bounded and continuous function on $\mathbb{R}^\infty$ (equipped with the metric $\| {\bf x} \|_1 = \sum_{i =1}^\infty |x_i|$), and note that for any $M > 0$ we have
\begin{align*}
&\mathbb{E}_n\left[ h(\hat C_1 \hat Q_1, \hat C_1 1(\hat N_1 \geq 1), \hat C_1 1(\hat N_1 \geq 2), \dots) \right] \\
&= \frac{n}{L_n} \mathbb{E}_n \left[ D^+_{(n)} h\left( \frac{\zeta_{(n)} Q_{(n)}}{D_{(n)}^+ \vee 1}, \frac{\zeta_{(n)} 1(D^-_{(n)} \geq 1)}{D_{(n)}^+ \vee 1}, \frac{\zeta_{(n)} 1(D^-_{(n)} \geq 2)}{D_{(n)}^+ \vee 1}, \dots \right) \right] \\
&\leq \frac{n}{L_n} \mathbb{E}_n \left[ (D^+_{(n)} \wedge M) h\left( \frac{\zeta_{(n)} Q_{(n)}}{D_{(n)}^+ \vee 1}, \frac{\zeta_{(n)} 1(D^-_{(n)} \geq 1)}{D_{(n)}^+ \vee 1}, \frac{\zeta_{(n)} 1(D^-_{(n)} \geq 2)}{D_{(n)}^+ \vee 1}, \dots \right) \right] \\
&\hspace{5mm} + \frac{n}{L_n}  \mathbb{E}_n \left[ (D^+_{(n)} - M)^+ \right] \sup_{{\bf x} \in \mathbb{R}^\infty} |h({\bf x})|.
\end{align*}
Since $J_M(m,d,q,c) := (d \wedge M) h(cq/(d \vee 1), c 1(m\geq 1)/(d \vee 1), c 1(m \geq 2)/(d \vee 1), \dots)$ is bounded and continuous on $\mathbb{N} \times \mathbb{N} \times \mathbb{R} \times [-c,c]$, and $(d-M)^+$ is Lipchitz continuous, Assumption~\ref{A.Primitives} yields
\begin{align*}
&\limsup_{n \to \infty} \mathbb{E}_n\left[ h(\hat C_1 \hat Q_1, \hat C_1 1(\hat N_1 \geq 1), \hat C_1 1(\hat N_1 \geq 2), \dots) \right] \\
& \leq \frac{1}{E[\mathscr{D}^+]} E[ J_M(\mathscr{D}^-, \mathscr{D}^+, Q, \zeta)] + \frac{1}{E[\mathscr{D}^+]} E[ (\mathscr{D}^+-M)^+]  \sup_{{\bf x} \in \mathbb{R}^\infty} |h({\bf x})|,
\end{align*}
with the limit holding in probability. The same arguments also yield
\begin{align*}
&\liminf_{n \to \infty} \mathbb{E}_n\left[ h(\hat C_1 \hat Q_1, \hat C_1 1(\hat N_1 \geq 1), \hat C_1 1(\hat N_1 \geq 2), \dots) \right] \\
& \geq \frac{1}{E[\mathscr{D}^+]} E[ J_M(\mathscr{D}^-, \mathscr{D}^+, Q, \zeta)] - \frac{1}{E[\mathscr{D}^+]} E[ (\mathscr{D}^+-M)^+]  \sup_{{\bf x} \in \mathbb{R}^\infty} |h({\bf x})|,
\end{align*}
in probability. Now take $M \to \infty$ and use the monotone convergence theorem to obtain that
\begin{align*}
&\mathbb{E}_n\left[ h(\hat C_1 \hat Q_1, \hat C_1 1(\hat N_1 \geq 1), \hat C_1 1(\hat N_1 \geq 2), \dots) \right] \\
&\xrightarrow{P} \frac{1}{E[\mathscr{D}^+]} E\left[ \lim_{M \to \infty} J_M(\mathscr{D}^-, \mathscr{D}^+, Q, \zeta) \right] \\
&=  \frac{1}{E[\mathscr{D}^+]} E\left[ \mathscr{D}^+ h\left( \frac{\zeta Q}{\mathscr{D}^+ \vee 1} , \frac{\zeta 1(\mathscr{D}^- \geq 1)}{\mathscr{D}^+ \vee 1}, \frac{\zeta 1(\mathscr{D}^- \geq 2)}{\mathscr{D}^+ \vee 1}, \dots \right) \right] \\
&= E[ h(\mathcal{C Q}, \mathcal{C} 1(\mathcal{N} \geq 1), \mathcal{C} 1(\mathcal{N} \geq 2), \dots ) ] 
\end{align*}
as $n \to \infty$. 

To establish the convergence in $d_1$ it suffices to show that 
$$\mathbb{E}_n\left[ |\hat Q_1 \hat C_1| + |\hat C_1| 1(\hat N_1 \geq 1) + |\hat C_1| 1(\hat N_1 \geq 2) + \dots \right] \xrightarrow{P} E\left[ |\mathcal{QC}| + |\mathcal{C}| 1(\mathcal{N} \geq 1) + |\mathcal{C}| 1(\mathcal{N} \geq 2) + \dots \right]$$
as $n \to \infty$ (see Theorem~6.9 and Definition~6.8(i) in \cite{Villani_2009}). To see that this is indeed the case, let $(D_{(n)}^-, D_{(n)}^+, Q_{(n)}, \zeta_{(n)}, \mathscr{D}^-, \mathscr{D}^+, Q, \zeta)$ be an optimal coupling for which $d_1(F_n, F)$ is attained. Next, note that
\begin{align*}
\left| \mathbb{E}_n\left[ |\hat Q_1 \hat C_1| \right] - E[ |\mathcal{Q C}| ] \right| &=  \left| \frac{n}{L_n} \mathbb{E}_n\left[   \frac{ D_{(n)}^+ |Q_{(n)} \zeta_{(n)}|}{D_{(n)}^+ \vee 1} \right] - \frac{1}{E[\mathscr{D}^+]} E\left[ \frac{\mathscr{D}^+ |Q\zeta|}{\mathscr{D}^+ \vee 1} \right] \right| \\
&\leq \frac{n}{L_n} \left|  \mathbb{E}_n\left[   \frac{ D_{(n)}^+ |Q_{(n)} \zeta_{(n)}|}{D_{(n)}^+ \vee 1} - \frac{\mathscr{D}^+ |Q\zeta|}{\mathscr{D}^+ \vee 1}  \right] \right| + \left| \frac{n}{L_n} - \frac{1}{E[\mathscr{D}^+]} \right| E\left[ \frac{\mathscr{D}^+ |Q\zeta|}{\mathscr{D}^+ \vee 1} \right]  \\
&\leq  \frac{n}{L_n}   \mathbb{E}_n\left[   \left| |Q_{(n)} \zeta_{(n)}| - |Q \zeta| \right| \right]   +  \frac{n}{L_n} \left| \mathbb{E}_n\left[  \frac{ D_{(n)}^+ |Q \zeta|}{D_{(n)}^+ \vee 1}  - \frac{\mathscr{D}^+ |Q\zeta|}{\mathscr{D}^+ \vee 1}  \right] \right| \\
&\hspace{5mm} + \left| \frac{n}{L_n} - \frac{1}{E[\mathscr{D}^+]} \right| E\left[  |Q c| \right] \\
&\leq \frac{n}{L_n}  \mathbb{E}_n\left[ |Q_{(n)} - Q| c + |Q| |\zeta_{(n)} - \zeta| \right] + \frac{n}{L_n} \mathbb{E}_n\left[ c |Q| \left|   \frac{D_{(n)}^+}{D_{(n)}^+ \vee 1} - \frac{\mathscr{D}^+}{\mathscr{D}^+ \vee 1} \right| \right] \\
&\hspace{5mm} + \left| \frac{n}{L_n} - \frac{1}{E[\mathscr{D}^+]} \right|  E\left[ c |Q | \right] .
\end{align*}
It follows from Assumption~\ref{A.Primitives} and the dominated convergence theorem that
$$\left| \mathbb{E}_n\left[ |\hat Q_1 \hat C_1| \right] - E[ |\mathcal{Q C}| ] \right| \xrightarrow{P} 0$$
as $n \to \infty$.  A similar argument gives for any $i \geq 1$, 
\begin{align*}
&\sum_{i=1}^\infty \left| \mathbb{E}_n\left[  |\hat C_1| 1(\hat N_1 \geq i) \right] - E[ |\mathcal{C}| 1(\mathcal{N} \geq i) ] \right| \\
&\leq \sum_{i=1}^\infty \left( \frac{n}{L_n}  \mathbb{E}_n\left[ |1(D_{(n)}^- \geq i) -  1(\mathscr{D}^- \geq i) | c + 1(\mathscr{D}^- \geq i) |\zeta_{(n)} - \zeta| \right] \right. \\
&\hspace{5mm} \left. + \frac{n}{L_n} \mathbb{E}_n\left[ c 1(\mathscr{D}^- \geq i) \left|   \frac{D_{(n)}^+}{D_{(n)}^+ \vee 1} - \frac{\mathscr{D}^+}{\mathscr{D}^+ \vee 1} \right| \right] + \left| \frac{n}{L_n} - \frac{1}{E[\mathscr{D}^+]} \right|  E\left[ c 1(\mathscr{D}^- \geq i) \right]  \right) \\
&\leq \frac{n}{L_n} \mathbb{E}_n\left[ |D_{(n)}^- - \mathscr{D}^-|c + \mathscr{D}^- |\zeta_{(n)} - \zeta|  + c \mathscr{D}^- \left|   \frac{D_{(n)}^+}{D_{(n)}^+ \vee 1} - \frac{\mathscr{D}^+}{\mathscr{D}^+ \vee 1} \right| \right]  \\
&\hspace{5mm} +  \left| \frac{n}{L_n} - \frac{1}{E[\mathscr{D}^+]} \right|  E[c \mathscr{D}^-] \xrightarrow{P} 0
\end{align*}
as $n \to \infty$. This completes the proof for the DCM.

\bigskip
{\bf Inhomogeneous random digraph:}

Let $(Z_{(n)}^-, Z_{(n)}^+ , Q_{(n)}, \zeta_{(n)})$ and $(Z^-, Z^+, Q, \zeta)$ be distributed according to:
\begin{align*}
\tilde f_n(m,d,q,z) &:= \mathbb{P}_n\left( Z_{(n)}^- = m, Z_{(n)}^+ = d, Q_{(n)} = q, \zeta_{(n)} = z \right) \\
&= \frac{1}{n} \sum_{i=1}^n p(m; \Lambda_n^+ \bar W_i^-/(\theta n))  p(d; \Lambda_n^- \bar W_i^+/(\theta n)) 1(Q_i = q, \zeta_i = z),  \quad \text{and} \\
\tilde f(m,d,q,z) &:= P\left( Z^- = m, Z^+ = d, Q \in dq, \zeta \in dz \right) \\
&= E\left[ p(m; E[W^+] W^-/\theta) p(d; E[W^-] W^+/\theta) 1(Q \in dq, \zeta \in dz) \right],
\end{align*}
respectively, where $p(m;\lambda) = e^{-\lambda} \lambda^m/m!$. Let $\tilde F_n$ and $\tilde F$ denote their corresponding distribution functions. The proof for the IRD will follow essentially from the same arguments used for the DCM once we show that $d_1(\tilde F_n, \tilde F) \to 0$ as $n \to \infty$ (simply replace $(D_{(n)}^-, D_{(n)}^+, \mathscr{D}^-, \mathscr{D}^+)$ with $(Z_{(n)}^-, Z_{(n)}^+, Z^-, Z^+)$,  $D_{(n)}^+ \vee 1$ with $Z_{(n)}^+ +1$, and $\mathscr{D}^+ \vee 1$ with $Z^+ + 1$). To show the convergence in $d_1$ note that if $(W_{(n)}^- , W_{(n)}^+, Q_{(n)}, \zeta_{(n)})$ is distributed according to $F_n$, then
$$(R^{-1}(U^-; \Lambda_n^+ \bar W^-_{(n)}/(\theta n)), R^{-1}(U^+; \Lambda_n^- \bar W^+_{(n)}/(\theta n)), Q_{(n)}, \zeta_{(n)}) \stackrel{\mathcal{D}}{=} (Z_{(n)}^-, Z_{(n)}^+ , Q_{(n)}, \zeta_{(n)}),$$
where $R(x;\lambda) = \sum_{m=0}^{\lfloor x \rfloor} p(m;\lambda)$ is the distribution function of a Poisson random variable with mean $\lambda$, $R^{-1}(u;\lambda) = \inf\{ x: R(x;\lambda) \geq u\}$ is its generalized inverse, and $U^-, U^+$ are i.i.d.~Uniform$[0,1]$ random variables independent of $(W_{(n)}^- , W_{(n)}^+, Q_{(n)}, \zeta_{(n)})$. Since $R(x;\lambda)$ is continuous in $\lambda$, the continuous mapping theorem yields
\begin{align*}
&(R^{-1}(U^-; \Lambda_n^+ \bar W^-_{(n)}/(\theta n)), R^{-1}(U^+; \Lambda_n^- \bar W^+_{(n)}/(\theta n)), Q_{(n)}, \zeta_{(n)}) \\
&\Rightarrow (R^{-1}(U^-; E[W^+] W^-/\theta), R^{-1}(U^+; E[W^-] W^+/\theta), Q, \zeta) \\
&\stackrel{\mathcal{D}}{=} (Z^-, Z^+, Q, \zeta)
\end{align*}
as $n \to \infty$. The convergence of the first absolute moment follows from noting that
\begin{align*}
\mathbb{E}_n\left[ \| (Z_{(n)}^-, Z_{(n)}^+ , Q_{(n)}, \zeta_{(n)}) \|_1 \right] &= \mathbb{E}_n\left[ \frac{\Lambda_n^+ W_{(n)}^-}{\theta n} + \frac{\Lambda_n^- W_{(n)}^+}{\theta n} + | Q_{(n)}| + |\zeta_{(n)}| \right] \\
&\xrightarrow{P} E\left[ \frac{E[W^+] W^-}{\theta} + \frac{E[W^-] W^+}{\theta} + |Q| + |\zeta| \right] \\
&= E\left[ \| (Z^-, Z^+, Q, \zeta) \|_1 \right]
\end{align*}
as $n \to \infty$. We conclude that $d_1(\tilde F_n, \tilde F) \xrightarrow{P} 0$ as $n \to \infty$, which completes the proof. 
\end{proof}

\bigskip

We now use Lemma~\ref{L.KRconvergence} to obtain the convergence of $\hat R_\emptyset^{(n,k)}$ in $d_1$ as $n \to \infty$.

\begin{theo} \label{T.TreeConvergence}
Under Assumption~\ref{A.Primitives}, there exists a coupling $(\hat R_\emptyset^{(n,k)}, \mathcal{R}^{(k)})$, where
$$\mathcal{R}^{(k)} := \sum_{r=0}^k \sum_{{\bf j} \in A_r} \Pi_{\bf j} \mathcal{Q}_{\bf j}$$
with $(\mathcal{N}_\emptyset, \mathcal{Q}_\emptyset) \stackrel{\mathcal{D}}{=} (\mathcal{N}_0, \mathcal{Q}_0)$ and $\{ (\mathcal{N}_{\bf i}, \mathcal{Q}_{\bf i}, \mathcal{C}_{\bf i}) \}_{{\bf i} \neq \emptyset}$ i.i.d.~with the same distribution as $(\mathcal{N}, \mathcal{Q}, \mathcal{C})$ and independent of $(\mathcal{N}_\emptyset, \mathcal{Q}_\emptyset)$ (as defined in Theorem~\ref{T.PageRank}), such that for any fixed $k \in \mathbb{N}_+$, 
$$\mathbb{E}_n\left[ \left| \hat R_\emptyset^{(n,k)} - \mathcal{R}^{(k)} \right| \right]  \xrightarrow{P} 0, \qquad n \to \infty.$$
\end{theo}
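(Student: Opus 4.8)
The plan is to recognize both $\hat R_\emptyset^{(n,k)}$ and $\mathcal{R}^{(k)}$ as finite-depth solutions of the \emph{same} weighted branching recursion, the only difference being the laws of the branching vectors, and then invoke the $d_1$-continuity result for such functionals from \cite{chenolvera1}. Using the notation of Section~\ref{SS.WeightedTree}, write $\hat R_\emptyset^{(n,k)} = \hat Q_\emptyset + \sum_{j=1}^{\hat N_\emptyset} \hat X_j^{(n,k-1)}$, where $\hat X_j^{(n,k-1)} = \hat C_j \hat R_j^{(n,k-1)}$ is the depth-$(k-1)$ rank of the subtree rooted at $j$; iterating, the $\hat X^{(n,\cdot)}$ form a delayed weighted branching process whose non-root vectors have the law $\nu_n$ of $(\hat C_1\hat Q_1,\hat C_1 1(\hat N_1\ge1),\hat C_1 1(\hat N_1\ge2),\dots)$ — this is the content of the identity $X_{\bf i} = \mathcal{C}_{\bf i}\mathcal{Q}_{\bf i} + \sum_{j\ge1}\mathcal{C}_{\bf i}1(\mathcal{N}_{\bf i}\ge j)X_{({\bf i},j)}$ derived around \eqref{eq:NewSFPE} — and whose root vector is $(\hat Q_\emptyset,1(\hat N_\emptyset\ge1),1(\hat N_\emptyset\ge2),\dots)$. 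The object $\mathcal{R}^{(k)}$ has the identical structure with $(\nu_n,\hat Q_\emptyset,\hat N_\emptyset)$ replaced by $(\nu,\mathcal{Q}_0,\mathcal{N}_0)$. Theorem~3 in \cite{chenolvera1} then yields, conditionally on $\mathscr{F}_n$, a coupling of the two trees under which the conditional $L_1$-distance $\mathbb{E}_n\big[\big|\hat R_\emptyset^{(n,k)} - \mathcal{R}^{(k)}\big|\big]$ is controlled by a constant depending only on $k$ and on first moments of the branching vectors, times $d_1(\nu_n,\nu)$ plus the $d_1$-distance between the two root laws.

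To run this I would carry out three verifications. First, $d_1(\nu_n,\nu)\xrightarrow{P}0$ is exactly Lemma~\ref{L.KRconvergence}. Second, the root data converges in $d_1$: for the DCM the root in-degree/personalization is $(\mathscr{D}^-_{(n)},Q_{(n)})$ \emph{without} size-bias, so the required convergence is immediate from $d_1(F_n,F)\xrightarrow{P}0$ together with the identity $\sum_{i\ge1}|1(\hat N_\emptyset\ge i)-1(\mathcal{N}_0\ge i)| = |\hat N_\emptyset-\mathcal{N}_0|$; for the IRD it follows in the same way from $d_1(\tilde F_n,\tilde F)\xrightarrow{P}0$, which is established inside the proof of Lemma~\ref{L.KRconvergence}. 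Third, I must check the moment hypotheses of Theorem~3 in \cite{chenolvera1} hold, uniformly in $n$, even when $E[\mathcal{N}]=\infty$. The key point is that the weight $\mathcal{C}=\zeta/(\mathscr{D}^+\vee1)$ already carries the factor $1/(\mathscr{D}^+\vee1)$, so $E[\mathcal{N}|\mathcal{C}|] = E\big[\mathscr{D}^-|\zeta|\,\mathscr{D}^+/((\mathscr{D}^+\vee1)E[\mathscr{D}^+])\big]\le c\,E[\mathscr{D}^-]/E[\mathscr{D}^+]=c<1$ in the DCM (and analogously in the IRD), while $E[|\mathcal{C}\mathcal{Q}|]<\infty$ and $E[\mathcal{N}_0]<\infty$; the corresponding $\nu_n$-moments $\mathbb{E}_n[|\hat C_1\hat Q_1|]$ and $\mathbb{E}_n[|\hat C_1|\hat N_1]$ are bounded by the same type of quantities (e.g.\ $\mathbb{E}_n[|\hat C_1|\hat N_1]\le (n/L_n)\,c\,\mathbb{E}_n[D^-_{(n)}]$ for the DCM), which converge in probability to finite limits under Assumption~\ref{A.Primitives}.

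With these inputs the conclusion is immediate: $k$ is fixed, so the constant in the coupling bound is $\mathscr{F}_n$-measurable and $O_P(1)$, whereas $d_1(\nu_n,\nu)$ plus the root $d_1$-distance tends to $0$ in probability, giving $\mathbb{E}_n\big[\big|\hat R_\emptyset^{(n,k)} - \mathcal{R}^{(k)}\big|\big]\xrightarrow{P}0$; the same moment bounds also show $E[|\mathcal{R}^{(k)}|]<\infty$ uniformly in $k$, which will be needed when letting $k\to\infty$ to finish the proof of Theorem~\ref{T.PageRank}. The main obstacle I anticipate is precisely this third verification: comparison theorems for weighted branching trees are usually stated under a finite-mean offspring assumption, which fails here because the size-biased in-degree $\mathcal{N}$ may have infinite mean; the resolution is to work entirely in the $X$-tree, where the effective per-edge weight is $\mathcal{C}$ rather than $1$, so that the relevant quantities are $E[\mathcal{N}|\mathcal{C}|^s]$ and $E[|\mathcal{C}\mathcal{Q}|^s]$ — all finite for small $s$, with $E[\mathcal{N}|\mathcal{C}|]\le c<1$ automatically — and to confirm the hypotheses of Theorem~3 in \cite{chenolvera1} are phrased in those terms and hold uniformly for $\nu_n$. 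A secondary, more routine difficulty is upgrading the root convergence from total variation (Lemma~\ref{L.TotalVar}) to $d_1$, which is required because the target bound is in $L_1$ rather than merely in distribution.
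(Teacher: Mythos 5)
Your proposal is correct and follows essentially the same route as the paper: decompose $\hat R_\emptyset^{(n,k)}$ into the root term plus conditionally i.i.d.\ subtree contributions $\hat X_j^{(n,k-1)}$, use Lemma~\ref{L.KRconvergence} together with the $d_1$-continuity theorem of Chen and Olvera-Cravioto to get $d_1$-convergence of the law of $\hat X_1^{(n,k-1)}$, and handle the root via the $d_1$-convergence of $(\hat N_\emptyset,\hat Q_\emptyset)$ to $(\mathcal{N}_0,\mathcal{Q}_0)$ with an optimal coupling. Your emphasis on working in the $X$-tree (where the relevant moments are $E[\mathcal{N}|\mathcal{C}|]\le c<1$ and $E[|\mathcal{C}\mathcal{Q}|]<\infty$ rather than $E[\mathcal{N}]$) is exactly the point of the paper's formulation of $\nu_n$ and $\nu$; the only cosmetic difference is that the paper applies the cited theorem to the non-root subtrees only and bounds the root level by an explicit three-term estimate rather than folding the delayed root into the external theorem's hypotheses.
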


\begin{proof}
We start by defining
$$\hat X_j^{(n,k-1)} = \sum_{r=1}^{k} \sum_{(j,{\bf i}) \in \hat A_r} \hat \Pi_{(j,{\bf i})}   \hat Q_{(j,{\bf i})} ,$$
and noting that
\begin{align*}
\hat R_\emptyset^{(n,k)} &=  \hat Q_\emptyset +\sum_{j=1}^{\hat N_\emptyset} \hat X_j^{(n,k-1)},
\end{align*}
with the $\{ \hat X_j^{(n,k-1)}\}$ conditionally i.i.d.~and conditionally independent of $(\hat Q_\emptyset, \hat N_\emptyset)$, given $\mathscr{F}_n$. Moreover, as described in Section~\ref{SS.WeightedTree}, the $\{ X_j^{(n,k-1)} \}_{j \geq 1}$ satisfy
$$\hat X_1^{(n,k-1)} \stackrel{\mathcal{D}}{=} \hat C_1 \hat Q_1 + \sum_{i=1}^{\hat N_1}  \hat C_1 \hat X_i^{(n,k-2)},$$
with the $\{ X_i^{(n,k-2)}\}_{i \geq 1}$ conditionally i.i.d.~and conditionally independent of $(\hat Q_1, \hat N_1, \hat C_1)$, given $\mathscr{F}_n$.  Let $\nu_n$ and $\nu$ denote the probability measures (conditionally on $\mathscr{F}_n$) of the vectors
\begin{align*}
( \hat C_1 \hat Q_1,  \hat C_1 1(\hat N_1 \geq 1), \hat C_1 1(\hat N_1 \geq 2), \dots) \quad \text{and} \quad ( \mathcal{CQ}, \mathcal{C} 1(\mathcal{N} \geq 1), \mathcal{C} 1(\mathcal{N} \geq 2), \dots ),
 \end{align*}
respectively. Since by Lemma~\ref{L.KRconvergence} we have that
$$d_1(\nu_n, \nu) \stackrel{P}{\longrightarrow} 0, \qquad n \to \infty,$$
it follows by Theorem~2 (Case 1) in \cite{Chen_Olv_13} (applied conditionally given $\mathscr{F} = \sigma\left( \bigcup_{n=1}^\infty \mathscr{F}_n \right)$), that 
$$d_1(G_{k-1,n}, G_{k-1}) \xrightarrow{P} 0, \qquad n \to \infty,$$
where $G_{k-1,n}(x) = \mathbb{P}_n \left( \hat X_1^{(n,k-1)} \leq x \right)$ and $G_{k-1}(x) =P(X_1^{(k-1)} \leq x)$, with 
$$X_j^{(k-1)} = \sum_{r=1}^k \sum_{(j,{\bf i}) \in A_r} \Pi_{(j,{\bf i})} \mathcal{Q}_{(j,{\bf i})}.$$
Next, note that Assumption~\ref{A.Primitives} implies that $(\hat N_\emptyset, \hat Q_\emptyset)$ converges to $(\mathcal{N}_0, \mathcal{Q}_0)$ in $d_1$, so we can choose an optimal coupling $(\hat N_\emptyset, \hat Q_\emptyset, \mathcal{N}_0, \mathcal{Q}_0)$. Let $\left\{ (\hat X_j^{(n,k-1)}, X_j^{(k-1)}): j \geq 1\right\}$ be a sequence of i.i.d.~vectors sampled according to an optimal coupling for $d_1(G_{k-1,n}, G_{k-1})$, conditionally independent  (given $\mathscr{F}_n$) of $(\hat N_\emptyset, \hat Q_\emptyset, \mathcal{N}_0, \mathcal{Q}_0)$, and construct
$$\hat R_\emptyset^{(n,k)} =  \hat Q_\emptyset + \sum_{j=1}^{\hat N_\emptyset} \hat X_j^{(n,k-1)} \qquad \text{and} \qquad  \mathcal{R}^{(k)} =  \mathcal{Q}_0 + \sum_{j=1}^{\mathcal{N}_0} X_j^{(k-1)} .$$
We then have,
\begin{align*}
\mathbb{E}_n\left[ \left| \hat R_\emptyset^{(n,k)} - \mathcal{R}^{(k)} \right| \right] &\leq  \mathbb{E}_n\left[ \left| \hat Q_\emptyset - \mathcal{Q}_0 \right| + \sum_{j=1}^{\hat N_\emptyset \wedge \mathcal{N}_0} \left|  \hat X_j^{(n,k-1)}  - X_j^{(k-1)} \right| \right. \\
&\hspace{5mm} \left.  + \sum_{j=\hat N_\emptyset \wedge \mathcal{N}_0 +1}^{\hat N_\emptyset} |\hat X_j^{(n,k-1)}|  + \sum_{j=\hat N_\emptyset \wedge \mathcal{N}_0 +1}^{\mathcal{N}_0} |X_j^{(k-1)}| \right]  \\
&=  \mathbb{E}_n\left[ \left| \hat Q_\emptyset - \mathcal{Q}_0 \right|  \right] + \mathbb{E}_n[ \hat N_\emptyset \wedge \mathcal{N}_0] d_1(G_{k-1,n}, G_{k-1}) \\
&\hspace{5mm} + \mathbb{E}_n\left[ (\hat N_\emptyset - \mathcal{N}_0)^+ \right] \mathbb{E}_n\left[ |\hat X_j^{(n,k-1)}| \right] + \mathbb{E}_n\left[ (\mathcal{N}_0 - \hat N_\emptyset)^+ \right] E\left[ |X_j^{(k-1)}| \right] \\
&\leq \mathbb{E}_n\left[ \left| \hat Q_\emptyset - \mathcal{Q}_0 \right|  \right] + E[  \mathcal{N}_0] d_1(G_{k-1,n}, G_{k-1}) \\
&\hspace{5mm} + \mathbb{E}_n\left[ \left|\hat N_\emptyset - \mathcal{N}_0 \right| \right] \left( E\left[ |X_j^{(k-1)}| \right]  + d_1(G_{k-1,n}, G_{k-1})  \right) \xrightarrow{P} 0,
\end{align*}
as $n \to \infty$. This completes the proof. 
\end{proof}

\bigskip

We now use Theorem~\ref{T.CouplingTime} to show that there exists a coupling between $R_\xi^{(n,k)}$ and $\hat R_\emptyset^{(n,k)}$ such that their difference converges to zero in $L_1$ norm. 

\begin{theo} \label{T.Coupling}
Under Assumption~\ref{A.Primitives}, there exists a coupling $(R_\xi^{(n,k)}, \hat R_\emptyset^{(n,k)})$, such that for any fixed $k \in \mathbb{N}_+$, 
$$\mathbb{E}_n \left[  \left| R_\xi^{(n,k)} -  \hat R_\emptyset^{(n,k)} \right|  \right]   \xrightarrow{P} 0, \qquad n \to \infty.$$
\end{theo}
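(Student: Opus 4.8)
The plan is to combine the probabilistic coupling already produced in Theorem~\ref{T.CouplingTime} with a uniform integrability argument, and then invoke the ``convergence in probability plus uniform integrability implies $L_1$ convergence'' principle (applied, as usual, conditionally given $\mathscr F_n$ and along a.s.-convergent subsequences of $d_1(F_n,F)$, so that all the ``$\xrightarrow{P}$'' statements become genuine limits on that subsequence).

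First I would fix the coupling $(R_\xi^{(n,k)},\hat R_\emptyset^{(n,k)})$ and stopping time $\tau$ supplied by Theorem~\ref{T.CouplingTime}. On $\{\tau>k\}$ the depth-$k$ in-neighborhood of $\xi$ in $\mathcal G_n$ is isomorphic to the first $k$ generations of the coupled tree, including all marks, so $R_\xi^{(n,k)}=\hat R_\emptyset^{(n,k)}$ there; since $\mathbb P_n(\tau\le k)=\frac1n\sum_i \mathbb P_{n,i}(\tau\le k)\xrightarrow{P}0$, the difference $R_\xi^{(n,k)}-\hat R_\emptyset^{(n,k)}$ tends to $0$ in $\mathbb P_n$-probability. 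It therefore suffices to show that the family $\{\,|R_\xi^{(n,k)}|+|\hat R_\emptyset^{(n,k)}|\,\}_n$ is (conditionally) uniformly integrable. The tree term is immediate from Theorem~\ref{T.TreeConvergence}: the $d_1$-convergence of $\hat R_\emptyset^{(n,k)}$ to $\mathcal R^{(k)}$ is equivalent to weak convergence together with convergence of first absolute moments, hence to weak convergence together with uniform integrability of $\{|\hat R_\emptyset^{(n,k)}|\}_n$.

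The real work is the uniform integrability of $\{|R_\xi^{(n,k)}|\}_n$. Here I would dominate $|R_\xi^{(n,k)}|\le \tilde R_\xi^{(n,k)}$, where $\tilde{\bf R}^{(n,k)}=|{\bf Q}|\sum_{l=0}^k|{\bf M}|^l$ with $|{\bf M}|=\text{diag}(|{\bf C}|){\bf A}$ is the $k$-step PageRank iterate built from the absolute-value data $(D_i^-,D_i^+,|Q_i|,|\zeta_i|)$ (resp.\ $(W_i^-,W_i^+,|Q_i|,|\zeta_i|)$ for the IRD). This modified data still satisfies Assumption~\ref{A.Primitives}, because $(u,v,q,x)\mapsto(u,v,|q|,|x|)$ is $1$-Lipschitz (so $d_1(F_n,F)\xrightarrow{P}0$ transfers) and the moment and boundedness conditions are unchanged; moreover the graph and the stopping time $\tau$ are unaffected by this relabeling of marks. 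Applying Theorems~\ref{T.CouplingTime} and \ref{T.TreeConvergence} to this data gives $\tilde R_\xi^{(n,k)}-\widehat{\tilde R}_\emptyset^{(n,k)}\to0$ in $\mathbb P_n$-probability and $\widehat{\tilde R}_\emptyset^{(n,k)}\to\widetilde{\mathcal R}^{(k)}$ in $d_1$, whence $\tilde R_\xi^{(n,k)}\Rightarrow\widetilde{\mathcal R}^{(k)}$ weakly; since $\tilde R_\xi^{(n,k)}\ge0$, uniform integrability of $\{\tilde R_\xi^{(n,k)}\}_n$ (and hence of the dominated family $\{|R_\xi^{(n,k)}|\}_n$) will follow once I prove the mean convergence $\mathbb E_n[\tilde R_\xi^{(n,k)}]\to E[\widetilde{\mathcal R}^{(k)}]$. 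This mean-convergence step is the main obstacle: the raw local neighborhoods have size with possibly infinite expectation (indeed $E[\mathcal N]$ may be $\infty$), so one cannot bound $\tilde R_\xi^{(n,k)}$ by the neighborhood size. Instead I would write $\mathbb E_n[\tilde R_\xi^{(n,k)}]=\frac1n\sum_{i=1}^n\tilde R_i^{(n,k)}=\frac1n\sum_{l=0}^k\sum_{j=1}^n|Q_j|\,\mathbb E_n\big[(|{\bf M}|^l{\bf 1})_j\big]$ (reversing the sum over directed walks), and evaluate the inner conditional expectation one generation at a time; the row-sum bound $\norm{|{\bf M}|}_\infty\le c$ — the same estimate used in Lemma~\ref{L.FiniteIterations} — shows that each backward generation contributes a factor at most $c$, so the series in $l$ is dominated by a convergent geometric series uniformly in $n$, while each term converges, using $d_1(F_n,F)\xrightarrow{P}0$ together with the convergence of the size-biasing constants (e.g.\ $L_n/n\to E[\mathscr D^-]$ and $\frac1{L_n}\sum_i D_i^-h({\bf D}_i)\to E[\mathscr D^- h(\mathscr D^-,\mathscr D^+,Q,\zeta)]$ for bounded continuous $h$, and the mixed-Poisson reformulation from the proof of Lemma~\ref{L.KRconvergence} in the IRD case), to the corresponding term $E\big[\sum_{{\bf j}\in A_l}\widetilde\Pi_{\bf j}|\mathcal Q_{\bf j}|\big]$ of $E[\widetilde{\mathcal R}^{(k)}]$; summing over $l$ yields the claim.

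With $\{|R_\xi^{(n,k)}|\}_n$ and $\{|\hat R_\emptyset^{(n,k)}|\}_n$ both uniformly integrable and $R_\xi^{(n,k)}-\hat R_\emptyset^{(n,k)}\to0$ in $\mathbb P_n$-probability, the dominated family $\{R_\xi^{(n,k)}-\hat R_\emptyset^{(n,k)}\}_n$ is uniformly integrable and converges to $0$ in probability, so $\mathbb E_n[|R_\xi^{(n,k)}-\hat R_\emptyset^{(n,k)}|]\to0$; undoing the conditioning via subsequences gives the stated $\xrightarrow{P}$ convergence. I expect essentially all of the difficulty to be concentrated in the mean-convergence step $\mathbb E_n[\tilde R_\xi^{(n,k)}]\to E[\widetilde{\mathcal R}^{(k)}]$ for the nonnegative (absolute-value) problem — this is precisely the ``convergence of the mean'' improvement over \cite{Chen_Lit_Olv_17,Lee_Olv_19} — while everything else is bookkeeping around the two couplings already in hand.
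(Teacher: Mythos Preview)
Your overall plan is sound and, if the mean–convergence step $\mathbb E_n[\tilde R_\xi^{(n,k)}]\to E[\widetilde{\mathcal R}^{(k)}]$ can be carried out, the uniform–integrability argument would indeed close the proof. But that step is precisely where your sketch has a gap. For $l\ge 2$ the quantity $\mathbb E_n\big[(|{\bf M}|^l{\bf 1})_j\big]$ is \emph{not} a function of ${\bf D}_j$ alone (nor of ${\bf W}_j$ alone in the IRD); it is an expectation, over the random pairing (DCM) or the random edges (IRD), of a product of adjacency entries along $l$-step walks. In the DCM these entries are correlated (the permutation samples without replacement), so ``evaluating the inner conditional expectation one generation at a time'' does not factorize: $\mathbb E_n[A_{ji}(|{\bf M}|^{l-1}{\bf 1})_i]\neq \mathbb E_n[A_{ji}]\,\mathbb E_n[(|{\bf M}|^{l-1}{\bf 1})_i]$. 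What you would actually need is an asymptotic factorization of $\mathbb E_n[A_{ji_1}\cdots A_{i_{l-1}i_l}]$ into $\prod D^+D^-/L_n$ up to $o(1)$, together with control of the contribution from walks with repeated vertices. That is doable (and your geometric bound $\|{\bf M}\|_\infty\le c$ does give the needed uniform domination across $l$), but it is a nontrivial graph–moment computation that $d_1(F_n,F)\to 0$ and the size–biasing limits alone do not supply. Your proposal does not indicate how you would handle it.

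The paper avoids this computation entirely and takes a genuinely different route. It works directly with the truncated quantity $\mathbb E_n[Y_\xi^{(n,k)}\,1(\tau\le k)]$ (where $Y$ is your $\tilde R$) and expands one generation of the recursion $Y_i^{(n,k)}=|Q_i|+c\sum_j Y_j^{(n,k-1)}{\bf P}_{ji}$. The key idea is to couple \emph{every} vertex $j$ in the graph to its own tree copy $\hat Y_{\emptyset(j)}^{(n,k-1)}$ simultaneously, and split the inbound neighbours of $\xi$ according to whether that coupling has succeeded. When it has, one truncates at level $M$ and uses the tree $d_1$-convergence (your Theorem~\ref{T.TreeConvergence} input); when it has failed, one uses the row-substochasticity $\sum_i{\bf P}_{ji}\le 1$ to bound the aggregate contribution by $c$ times the \emph{same} quantity $\mathbb E_n[Y_\xi^{(n,k)}1(\tau\le k)]$. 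This yields an inequality of the form
\[
\mathbb E_n\big[Y_\xi^{(n,k)}1(\tau\le k)\big]\ \le\ c\,\mathbb E_n\big[Y_\xi^{(n,k)}1(\tau\le k)\big]\ +\ (\text{terms}\xrightarrow{P}0),
\]
which one solves using $c<1$. No mean computation on the graph is needed. If you want to push your approach through, you will need to supply the asymptotic factorization of the $l$-step walk expectations; otherwise, the paper's contraction trick is both shorter and avoids the issue altogether.
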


\begin{proof}
We start by constructing $R_\xi^{(n,k)}$ and $\hat R^{(n,k)}_\emptyset$ according to the coupling described in  Theorem~\ref{T.CouplingTime}, and noting that
\begin{align*}
\mathbb{E}_n\left[ \left| R_\xi^{(n,k)} - \hat R_\emptyset^{(n,k)} \right| \right] &= \mathbb{E}_n\left[ \left| R_\xi^{(n,k)} - \hat R_\emptyset^{(n,k)} \right| 1(\tau \leq k) \right] \\
&\leq \mathbb{E}_n\left[ \left| R_\xi^{(n,k)}  \right| 1(\tau \leq k) \right] + \mathbb{E}_n\left[ \left|  \hat R_\emptyset^{(n,k)} \right| 1(\tau \leq k) \right]. 
\end{align*}
To bound the second expectation, let $M > 0$ be a constant and note that
\begin{align*}
\mathbb{E}_n\left[ \left|  \hat R_\emptyset^{(n,k)} \right| 1(\tau \leq k) \right] &\leq \mathbb{E}_n\left[ \left|  \hat R_\emptyset^{(n,k)} \right| 1(|\hat R_\emptyset^{(n,k)}| > M) \right] + M \mathbb{P}_n \left( \tau \leq k \right).
\end{align*}
Now use Theorems~\ref{T.CouplingTime} and \ref{T.TreeConvergence} to obtain that
$$\limsup_{n \to \infty} \mathbb{E}_n\left[ \left|  \hat R_\emptyset^{(n,k)} \right| 1(\tau \leq k) \right]  \leq E\left[ |\mathcal{R}^{(k)}| 1(|\mathcal{R}^{(k)}| > M) \right],$$
(provided $M$ is a continuity point of the limiting distribution), where $\mathcal{R}^{(k)}$ is defined in Theorem~\ref{T.TreeConvergence} and satisfies $E[|\mathcal{R}^{(k)}|] < \infty$. Taking $M \to \infty$ yields
$$ \mathbb{E}_n\left[ \left|  \hat R_\emptyset^{(n,k)} \right| 1(\tau \leq k) \right]  \xrightarrow{P} 0, \qquad n \to \infty.$$

To show that $\mathbb{E}_n\left[ \left| R_\xi^{(n,k)}  \right| 1(\tau \leq k) \right]$ also converges to zero, we start by using the upper bound $|\zeta_i| \leq c$ for all $1 \leq i \leq n$, to bound $R_\xi^{(n,k)}$ as follows:
$$ \left| R_\xi^{(n,k)}  \right| \leq Y_\xi^{(n,k)} := (|Q_1|, \dots, |Q_n|) \sum_{k=0}^t c^k ({\bf P}^k)_{\bullet \xi},$$
where ${\bf P} = \text{diag}({\bf D}) {\bf A}$, ${\bf D} = (1/(D_1^- \vee 1), \dots, 1/(D_n^- \vee 1))$, and ${\bf P}_{\bullet i}$ is the $i$th column of matrix ${\bf P}$. Define also its coupled version on a tree
$$\hat Y_{\emptyset}^{(n,k)} = \sum_{r=0}^k \sum_{{\bf j} \in \hat A_r} \tilde \Pi_{\bf j} |\hat Q_{\bf j}|,$$
where $\tilde \Pi_\emptyset \equiv 1$ and $\tilde \Pi_{({\bf i},j)}  = \tilde \Pi_{\bf i} \tilde C_{({\bf i},j)}$, with $\tilde C_{\bf i} = c/(\hat D_{\bf i} \vee 1)$.  

Note that the exploration process leading to the coupling of $R_\xi^{(n,k)}$ and $\hat R_\emptyset^{(n,k)}$ also provides a coupling for $Y_\xi^{(n,k)}$ and $\hat Y_\emptyset^{(n,k)}$. In addition, while exploring the in-component of vertex $\xi$, we will also explore the in-components of any vertices we encounter in the process. For vertices that are not in the in-component of $\xi$, we can construct their own couplings with a marked tree using Theorem~\ref{T.CouplingTime}, so that for each vertex in $\mathcal{G}_n$ we obtain a coupling of the form $(Y_i^{(n,k)}, \hat Y_{\emptyset(i)}^{(n,k)})$, $1 \leq i \leq n$, where the subscript $\emptyset(i)$ denotes that the root of the tree where $Y_{\emptyset(i)}^{(n,k)}$ is constructed corresponds to the exploration started at vertex $i$.  Note that the $\{ \hat Y_{\emptyset(i)}^{(n,k)}: 1 \leq i \leq n\}$ will not be independent, since we may have that $\hat Y_{\emptyset(i)}^{(n,r)}$ is constructed on a subtree of a tree rooted at $\emptyset(j)$ for some $r < k$ and $i \neq j$. Using these processes and fixing $M > 0$, we now obtain:
\begin{align*}
&\mathbb{E}_n\left[Y_\xi^{(n,k)} 1(\tau \leq k) \right] \\
&= \mathbb{E}_n \left[ \left( \sum_{j =1}^n c Y_j^{(n,k-1)} {\bf P}_{j\xi}  + |Q_\xi| \right) 1( \tau \leq  k) \right] \\
&=   c \mathbb{E}_n \left[ \left( \sum_{j =1}^n Y_j^{(n,k-1)} 1(Y_j^{(n,k-1)} \neq \hat Y_{\emptyset(j)}^{(n,k-1)}) {\bf P}_{j\xi} + |Q_\xi|   \right) 1( \tau \leq  k) \right]  \\
&\hspace{5mm} +   c \mathbb{E}_n \left[ \left( \sum_{j =1}^n  Y_j^{(n,k-1)} 1(Y_j^{(n,k-1)} = \hat Y_{\emptyset(j)}^{(n,k-1)}){\bf P}_{j\xi}+ |Q_\xi| \right) 1( \tau \leq  k) \right] \\
&= \frac{c}{n} \sum_{i=1}^n \sum_{j=1}^n  \mathbb{E}_{n,i}\left[ Y_j^{(n,k-1)} 1(Y_j^{(n,k-1)} \neq \hat Y_{\emptyset(j)}^{(n,k-1)})  {\bf P}_{ji} 1(\tau \leq k) \right] \\
&\hspace{5mm} +  \frac{c}{n} \sum_{i=1}^n \sum_{j =1}^n\mathbb{E}_{n,i} \left[   \hat Y_j^{(n,k-1)} 1(Y_j^{(n,k-1)} = \hat Y_{\emptyset(j)}^{(n,k-1)}, \hat Y_{\emptyset(j)}^{(n,k-1)} \leq M) {\bf P}_{ji} 1( \tau \leq  k) \right] \\
&\hspace{5mm} +  \frac{c}{n} \sum_{i=1}^n \sum_{j =1}^n \mathbb{E}_{n,i} \left[ \hat Y_j^{(n,k-1)} 1(Y_j^{(n,k-1)} = \hat Y_{\emptyset(j)}^{(n,k-1)}, \hat Y_{\emptyset(j)}^{(n,k-1)} > M) {\bf P}_{ji} 1( \tau \leq  k) \right] \\
&\hspace{5mm} + \frac{c}{n} \sum_{i=1}^n \mathbb{E}_{n,i} \left[  |Q_i|  1( \tau \leq  k) \right]  \\
&\leq \frac{c}{n} \sum_{j=1}^n  \mathbb{E}_{n}\left[ Y_j^{(n,k-1)} 1(Y_j^{(n,k-1)} \neq \hat Y_{\emptyset(j)}^{(n,k-1)})  \right]  + \frac{cM}{n} \sum_{i=1}^n \sum_{j=1}^n \mathbb{E}_{n,i} \left[ {\bf P}_{ji} 1(\tau \leq k) \right]  \\
&\hspace{5mm} + \frac{c}{n} \sum_{j=1}^n \mathbb{E}_n\left[ \hat Y_{\emptyset(j)}^{(n,k)} 1(\hat Y_{\emptyset(j)}^{(n,k)} > M) \right] \\
&\hspace{5mm} + \frac{cM}{n} \sum_{i=1}^n \mathbb{P}_{n,i}(\tau \leq k) + \frac{c}{n} \sum_{i=1}^n  |Q_i| 1(|Q_i| > M) \\
&\leq c \mathbb{E}_n\left[ Y_\xi^{(n,k)} 1(\tau \leq k) \right] + cM  \mathbb{E}_{n} [ D_\xi^- 1(\tau \leq k) ] + c \mathbb{E}_n\left[ \hat Y_\emptyset^{(n,k)} 1(\hat Y_\emptyset^{(n,k)} > M) \right] \\
&\hspace{5mm} + cM \mathbb{P}_n(\tau \leq k) + c \mathbb{E}_n\left[ |\hat Q_\emptyset| 1(|\hat Q_\emptyset| > M) \right],
\end{align*}
where in the first inequality we used the observation that $\sum_{i=1}^n {\bf P}_{ji} \leq 1$, and on the second inequality that $\sum_{j=1}^n {\bf P}_{ji} \leq D_i^-$. It follows that
\begin{align*}
 \mathbb{E}_n\left[ \left| R_\xi^{(n,k)} \right| 1(\tau \leq k) \right] &\leq \mathbb{E}_n\left[Y_\xi^{(n,k)} 1(\tau \leq k) \right] \\
&\leq (1-c)^{-1} \left(  cM \mathbb{E}_n\left[ D_\xi^- 1(\tau \leq k) \right] + c  \mathbb{E}_n\left[ \hat Y_\emptyset^{(n,k)} 1(\hat Y_\emptyset^{(n,k)} > M) \right]   \right. \\
&\hspace{5mm} \left. + cM \mathbb{P}_n(\tau \leq k)  + c \mathbb{E}_n\left[ |\hat Q_\emptyset| 1(|\hat Q_\emptyset| > M) \right] \right).
\end{align*}

To analyze the right-hand side of the inequality, note that Theorem~\ref{T.TreeConvergence} gives (provided $M$ is a continuity point of the limiting distribution):
$$ \mathbb{E}_n\left[ \hat Y_\emptyset^{(n,k)} 1(\hat Y_\emptyset^{(n,k)} > M) \right] \xrightarrow{P} E[ \mathcal{Y}^{(k)} 1(\mathcal{Y}^{(k)} > M) ], \qquad n \to \infty,$$
where $\mathcal{Y}^{(k)}$ is constructed as in Theorem~\ref{T.TreeConvergence} with the generic branching vector $(\mathcal{N}, |\mathcal{Q}|, \mathcal{C}')$, $\mathcal{C}'$ adjusted to match $\tilde C_{\bf i} = c/(D^+_{\bf i} \vee 1)$. Assumption~\ref{A.Primitives} gives (for any continuity point $M$):
$$\mathbb{E}_n\left[ |\hat Q_\emptyset| 1(|\hat Q_\emptyset| > M) \right]  \xrightarrow{P} E[ |\mathcal{Q}_0| 1(|\mathcal{Q}_0| > M) ], \qquad n \to \infty,$$
and Theorem~\ref{T.CouplingTime} gives $\mathbb{P}_n(\tau \leq k) \xrightarrow{P} 0$ as $n \to \infty$. It remains to show that $\mathbb{E}_n\left[ D_\xi^- 1(\tau \leq k) \right]  \xrightarrow{P} 0$ as $n \to \infty$. To do this, note that for both models we have that $D_\xi^-$ converges to $\mathcal{N}_0$ in $d_1$ (use Assumption~\ref{A.Primitives} for the DCM and Theorem~2.4 in \cite{Lee_Olv_19} for the IRD).  Hence, there exists an optimal coupling $(D_\xi^-, \mathcal{N}_0)$ for which the minimum distance is attained, which we use to obtain the bound
\begin{align*}
\mathbb{E}_n\left[ D_\xi^- 1(\tau \leq k) \right] &\leq \mathbb{E}_n\left[ \left| D_\xi^- - \mathcal{N}_0 \right| \right] + \mathbb{E}_n\left[ \mathcal{N}_0 1(\tau \leq k) \right] .
\end{align*}
Theorem~\ref{T.CouplingTime} gives $1(\tau \leq k) \xrightarrow{P} 0$, so dominated convergence (since $E[\mathcal{N}_0] < \infty$) gives
$$\mathbb{E}_n\left[ D_\xi^- 1(\tau \leq k) \right] \xrightarrow{P} 0, \qquad n \to \infty.$$
We conclude that
$$\limsup_{n \to \infty}  \mathbb{E}_n\left[ \left| R_\xi^{(n,k)} \right| 1(\tau \leq k) \right] \leq (1-c)^{-1} \left( c E[ \mathcal{Y}^{(k)} 1(\mathcal{Y}^{(k)} > M) ] + c E[ |\mathcal{Q}_0| 1(|\mathcal{Q}_0| > M) ] \right),$$
and taking $M \to \infty$ completes the proof. 
\end{proof}

\bigskip

Now that we have a coupling between $R_\xi$ and $R_\xi^{(n,k)}$ (Lemma~\ref{L.FiniteIterations}), another one between $R_\xi^{(n,k)}$ and $\hat R_\emptyset^{(n,k)}$ (Theorem~\ref{T.Coupling}), and a third one between $\hat R_\emptyset^{(n,k)}$ and $\mathcal{R}^{(k)}$ (Theorem~\ref{T.TreeConvergence}), it only remains to show that $\mathcal{R}^{(k)}$ converges to $\mathcal{R}^*$.

\begin{lemma} \label{L.TailLimit}
Let
$$\mathcal{R}^{(k)} := \sum_{r=0}^k \sum_{{\bf j} \in A_r} \Pi_{\bf j} \mathcal{Q}_{\bf j} \qquad \text{and} \qquad \mathcal{R}^* := \sum_{r=0}^\infty \sum_{{\bf j} \in A_r} \Pi_{\bf j} \mathcal{Q}_{\bf j}.$$
Then, under Assumption~\ref{A.Primitives}, we have that for any fixed $k \in \mathbb{N}_+$, 
$$E\left[ \left| \mathcal{R}^{(k)} - \mathcal{R}^* \right| \right] \leq \frac{c^{k+1}}{1-c} \cdot E[|\mathcal{Q}_0|] .$$ 
\end{lemma}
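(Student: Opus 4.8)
The idea is to control the expected total \emph{absolute} weight carried by each generation of the tree and then sum a geometric series; absolute values are needed throughout since $\zeta$, and hence $\mathcal{C}$, may be negative, while $|\mathcal{C}_{\bf i}|\le c$ a.s.\ by Assumption~\ref{A.Primitives}. Set $Y_r:=\sum_{{\bf j}\in A_r}\Pi_{\bf j}\mathcal{Q}_{\bf j}$, so that $\mathcal{R}^{(k)}=\sum_{r=0}^{k}Y_r$, $\mathcal{R}^*=\sum_{r=0}^{\infty}Y_r$ and $\mathcal{R}^*-\mathcal{R}^{(k)}=\sum_{r>k}Y_r$. It then suffices to establish $y_r:=E\big[\sum_{{\bf j}\in A_r}|\Pi_{\bf j}|\,|\mathcal{Q}_{\bf j}|\big]\le c^{\,r}\,E[|\mathcal{Q}_0|]$ for every $r\ge 0$: granting this, $\sum_{r\ge 0}E[|Y_r|]\le\sum_{r\ge 0}y_r\le (1-c)^{-1}E[|\mathcal{Q}_0|]<\infty$, so $\mathcal{R}^*$ is an absolutely convergent series and, by Tonelli, $E[|\mathcal{R}^*-\mathcal{R}^{(k)}|]\le\sum_{r>k}E[|Y_r|]\le\sum_{r>k}y_r\le E[|\mathcal{Q}_0|]\sum_{r>k}c^{\,r}=\frac{c^{k+1}}{1-c}E[|\mathcal{Q}_0|]$.

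To bound $y_r$ I would peel off one generation at a time, keeping track of the edge weight pointing toward the root. For a generic non-root node ${\bf v}$ let $z_i$ be the expectation of $\sum_{\bf j}|\Pi_{\bf j}/\Pi_{{\bf v}^-}|\,|\mathcal{Q}_{\bf j}|$, the sum ranging over the descendants ${\bf j}$ of ${\bf v}$ lying $i$ generations below ${\bf v}$, so that the ratio $\Pi_{\bf j}/\Pi_{{\bf v}^-}$ retains the weight $\mathcal{C}_{\bf v}$ of the edge into ${\bf v}$. Classifying a descendant of ${\bf v}$ at relative depth $i\ge 1$ by the child of ${\bf v}$ it descends from, and using that the marked subtrees rooted at the children of ${\bf v}$ are i.i.d.\ and independent of the vector $(\mathcal{N}_{\bf v},\mathcal{Q}_{\bf v},\mathcal{C}_{\bf v})$, a conditioning on $(\mathcal{N}_{\bf v},\mathcal{C}_{\bf v})$ gives $z_i=E[\mathcal{N}\,|\mathcal{C}|]\,z_{i-1}$ for $i\ge 1$ and $z_0=E[|\mathcal{C}|\,|\mathcal{Q}|]$, hence $z_i=(E[\mathcal{N}|\mathcal{C}|])^{\,i}E[|\mathcal{C}||\mathcal{Q}|]$. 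The same decomposition at the root (whose children's subtrees are i.i.d.\ with the generic law and independent of $(\mathcal{N}_0,\mathcal{Q}_0)$) gives $y_0=E[|\mathcal{Q}_0|]$ and $y_i=E[\mathcal{N}_0]\,z_{i-1}=E[\mathcal{N}_0](E[\mathcal{N}|\mathcal{C}|])^{\,i-1}E[|\mathcal{C}||\mathcal{Q}|]$ for $i\ge 1$. Since all quantities here are nonnegative, these identities (Wald-type sums indexed by the random $\mathcal{N}_{\bf v}$, resp.\ $\mathcal{N}_0$) are legitimate by Tonelli even if a priori infinite.

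Everything thus reduces to the two elementary estimates $E[\mathcal{N}\,|\mathcal{C}|]\le c$ and $E[\mathcal{N}_0]\,E[|\mathcal{C}|\,|\mathcal{Q}|]\le c\,E[|\mathcal{Q}_0|]$, which together yield $y_r\le c^{\,r}E[|\mathcal{Q}_0|]$ and finish the proof. For the DCM these follow by plugging in the explicit law of $(\mathcal{N},\mathcal{Q},\mathcal{C})$ from Theorem~\ref{T.PageRank}, bounding $\mathscr{D}^+/(\mathscr{D}^+\vee 1)\le 1$ and $|\zeta|\le c$, and using $E[\mathscr{D}^-]=E[\mathscr{D}^+]$: for instance $E[\mathcal{N}|\mathcal{C}|]=E\big[\mathscr{D}^-|\zeta|\,\tfrac{\mathscr{D}^+}{(\mathscr{D}^+\vee1)E[\mathscr{D}^+]}\big]\le c\,E[\mathscr{D}^-]/E[\mathscr{D}^+]=c$, and likewise for the product (recall $E[|\mathcal{Q}_0|]=E[|Q|]$). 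For the IRD the same computation works after conditioning on the type $(W^-,W^+,Q,\zeta)$ and using the conditional independence of $Z^-$ and $Z^+$ together with $E[(Z^++1)^{-1}\mid W^-,W^+]\le\theta/(E[W^-]W^+)$, the reciprocal of the conditional Poisson mean, which exactly absorbs the extra $W^+$ produced by the size bias; finiteness of $E[\mathcal{N}_0]$ and $E[|\mathcal{Q}_0|]$ comes from Assumption~\ref{A.Primitives} and the formulas in Theorem~\ref{T.PageRank}. I expect this last step to be the delicate point: as the Remark after Theorem~\ref{T.PageRank} stresses, $E[\mathcal{N}]$ may itself be infinite, so the naive ``mean offspring below $c$'' argument is unavailable; what saves the day is that the weight $|\mathcal{C}|$ carries an inverse out-degree, and this is precisely what keeps the size-biased mean $E[\mathcal{N}|\mathcal{C}|]$ strictly below one.
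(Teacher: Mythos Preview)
Your proposal is correct and follows essentially the same route as the paper: both bound $E[|\mathcal{R}^*-\mathcal{R}^{(k)}|]$ by $\sum_{r>k}E\big[\sum_{{\bf j}\in A_r}|\Pi_{\bf j}||\mathcal{Q}_{\bf j}|\big]$, peel off generations via the branching property to obtain $y_r=E[\mathcal{N}_0](E[\mathcal{N}|\mathcal{C}|])^{r-1}E[|\mathcal{C}\mathcal{Q}|]$, and then verify the two key inequalities $E[\mathcal{N}|\mathcal{C}|]\le c$ and $E[\mathcal{N}_0]E[|\mathcal{C}\mathcal{Q}|]\le cE[|\mathcal{Q}_0|]$ separately for the DCM and the IRD. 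Your introduction of the auxiliary quantity $z_i$ (retaining the incoming edge weight) is only a notational repackaging of the paper's direct recursion, and your IRD bound $E[(Z^++1)^{-1}\mid W^+]\le\theta/(E[W^-]W^+)$ is exactly the inequality the paper uses (via the explicit Poisson formula $(1-e^{-\lambda})/\lambda\le 1/\lambda$).
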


\begin{proof}
First note that by Assumption~\ref{A.Primitives} we have, for the DCM:
$$E[ \mathcal{N |C|} ] = E\left[ \mathscr{D}^- \cdot \frac{|\zeta|}{\mathscr{D}^+ \vee 1} \cdot \frac{\mathscr{D}^+}{E[\mathscr{D}^+]} \right] = \frac{1}{E[\mathscr{D}^+]} E[ \mathscr{D}^- |\zeta| 1(\mathscr{D}^+ \geq 1) ] \leq c  < 1,$$
and
$$E[ \mathcal{|CQ|} ] = E\left[ |Q| \cdot \frac{|\zeta|}{\mathscr{D}^+ \vee 1} \cdot \frac{\mathscr{D}^+}{E[\mathscr{D}^+]} \right] = \frac{1}{E[\mathscr{D}^+]} E[ |Q| |\zeta| 1(\mathscr{D}^+ \geq 1) ] \leq \frac{cE[|Q|]}{E[\mathscr{D}^+]} = \frac{cE[|\mathcal{Q}_0|]}{E[\mathcal{N}_0]} < \infty,$$
and for the IRD:
\begin{align*}
E[ \mathcal{N |C|} ] &= E\left[ Z^- \cdot \frac{|\zeta|}{Z^++1} \cdot \frac{W^+}{E[W^+]} \right] = \frac{1}{E[W^+]} E\left[ E[Z^- | W^-] E\left[ \left. \frac{1}{Z^++1} \right| W^+ \right] |\zeta| W^+ \right] \\
&=  \frac{1}{E[W^+]} E\left[ \frac{E[W^+] W^-}{\theta} \cdot \frac{\theta}{E[W^-] W^+} \left( 1- e^{-E[W^-]W^+/\theta} \right) \cdot |\zeta| W^+ \right] \\
&= \frac{1}{E[W^-]} E\left[ |\zeta| W^- \left( 1- e^{-E[W^-]W^+/\theta} \right) \right]  \leq c < 1,
\end{align*}
and
\begin{align*}
E[ \mathcal{|CQ|} ] &= E\left[ Q \cdot \frac{|\zeta|}{Z^++1} \cdot \frac{W^+}{E[W^+]} \right] = \frac{1}{E[W^+]} E\left[  E\left[ \left. \frac{1}{Z^++1} \right| W^+ \right] |Q\zeta| W^+ \right] \\
&=  \frac{1}{E[W^+]} E\left[ \frac{\theta}{E[W^-] W^+} \left( 1- e^{-E[W^-]W^+/\theta} \right) \cdot |Q\zeta| W^+ \right] \leq \frac{\theta cE[|Q|]}{E[W^+] E[W^-]} = \frac{cE[|\mathcal{Q}_0|]}{E[\mathcal{N}_0]} < \infty.
\end{align*}

Now use the branching property to compute:
\begin{align*}
E\left[ \left| \mathcal{R}^{(k)} - \mathcal{R}^* \right| \right] &\leq \sum_{r=k+1}^\infty E\left[ \sum_{{\bf j} \in A_r} |\Pi_{\bf j} \mathcal{Q}_{\bf j}| \right] = \sum_{r=k+1}^\infty E\left[ \sum_{{\bf j} \in A_{r-1}} |\Pi_{\bf j}| \sum_{i=1}^{\mathcal{N}_{\bf j}} |\mathcal{C}_{({\bf j},i)} \mathcal{Q}_{({\bf j},i)} | \right]  \\
&= \sum_{r=k+1}^\infty E\left[ \sum_{{\bf j} \in A_{r-1}} |\Pi_{\bf j}| \mathcal{N}_{\bf j}  \right] E[|\mathcal{CQ}|] = \sum_{r=k+1}^\infty E\left[ \sum_{{\bf j} \in A_0} |\Pi_{\bf j}| \mathcal{N}_{\bf j} \right] \left( E[|\mathcal{C}| \mathcal{N}] \right)^{r-1} E[|\mathcal{CQ} |] \\
&= E[\mathcal{N}_0] E[|\mathcal{CQ}|] \sum_{r=k}^\infty \left( E[|\mathcal{C}| \mathcal{N}] \right)^{r}  \leq cE[|\mathcal{Q}_0|] \sum_{r=k}^\infty c^r = E[|\mathcal{Q}_0|] \frac{c^{k+1}}{1-c}.
\end{align*}
\end{proof}

\bigskip

The proof of Theorem~\ref{T.PageRank} will now immediately follow from combining Lemma~\ref{L.FiniteIterations}, Theorem~\ref{T.TreeConvergence}, Theorem~\ref{T.Coupling},  and Lemma~\ref{L.TailLimit}.

\begin{proof}[Proof of Theorem~\ref{T.PageRank}]
Fix $\epsilon > 0$ and find $k \geq 1$ such that $E[|\mathcal{Q}_0|] (1-c)^{-1} c^{k+1} < \epsilon/2$. Let $H_{n,k}(x) = \mathbb{P}_n( R_\xi^{(n,k)} \leq x)$, $\hat H_{n,k}(x) = \mathbb{P}_n( \hat R_\emptyset^{(n,k)} \leq x)$ and $H_k(x) = P( \mathcal{R}^{(k)} \leq x)$. By Lemma~\ref{L.FiniteIterations} we have
$$d_1(H_n, H_{n,k}) \leq \frac{c^{k+1}}{1-c} \cdot \mathbb{E}_n\left[ |\hat Q_\emptyset| \right].$$
By Theorem~\ref{T.Coupling} we have that
$$d_1(H_{n,k}, \hat H_{n,k}) \xrightarrow{P} 0, \qquad n \to \infty.$$
By Theorem~\ref{T.TreeConvergence} we have that
$$d_1(\hat H_{n,k}, \hat H_{k}) \xrightarrow{P} 0, \qquad n \to \infty.$$
And by Lemma~\ref{L.TailLimit} we have
$$d_1(\hat H_k, H) \leq \frac{c^{k+1}}{1-c} \cdot E[|\mathcal{Q}_0|].$$
It follows from the triangle inequality that
\begin{align*}
d_1(H_n, H) &\leq d_1(H_n, H_{n,k}) + d_1(H_{n,k}, \hat H_{n,k}) + d_1(\hat H_{n,k}, \hat H_k) + d_1(\hat H_k, H)  \\
&\leq \frac{c^{k+1}}{1-c} \cdot \mathbb{E}_n\left[ |\hat Q_\emptyset| \right] +  d_1(H_{n,k}, \hat H_{n,k}) + d_1(\hat H_{n,k}, \hat H_k) + \frac{c^{k+1}}{1-c} \cdot E[|\mathcal{Q}_0|] \\
&\xrightarrow{P} \frac{2c^{k+1}}{1-c} \cdot E[|\mathcal{Q}_0|] < \epsilon
\end{align*}
as $n \to \infty$. Since $\epsilon$ was arbitrary, we conclude $d_1(H_n, H) \xrightarrow{P} 0$ as $n \to \infty$. 
\end{proof}

\subsection{Proofs for results on the power law behavior of PageRank} \label{SS.PowerLaw}

This section contains two results, Theorems~\ref{T.SumFatM} and \ref{T.SumFatY}, which were used to prove Theorems~\ref{T.PageRankRoot}  and \ref{T.PageRankNeighbors}, as well as a proof of the claims made in Example~\ref{E.CorrelatedRV}.  Before we state the theorems, we start with two preliminary technical lemmas related to regularly varying and intermediate regularly varying distributions. Throughout this section, we use $f(x) = O(g(x))$ as $x \to \infty$ to mean $\limsup_{x \to \infty} |f(x)/g(x)| < \infty$.

\begin{lemma} \label{L.IRprop}
Let $f(x) = P(Y > x) \in IR$ and suppose $-\infty < \beta(f) \leq \alpha(f) < 0$ are its Matuszewska indexes. Then, for any $\gamma > (-\beta(f)) \vee 1$ there exists a constant $x_0 > 0$ such that
$$\lim_{x \to \infty} \frac{x^{-\gamma} E[Y 1(x_0 \leq Y < x^\gamma)]}{P(Y > x)}  = 0, \qquad  \text{and} \qquad \lim_{x \to \infty} \frac{P(Y > x^\gamma)}{P(Y > x)} = 0.$$
\end{lemma}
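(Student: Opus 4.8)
The plan is to reduce both statements to Potter-type bounds for $O$-regularly varying functions. Since the Matuszewska indices $-\infty<\beta(f)\le\alpha(f)<0$ are finite, $f$ is $O$-regularly varying, so by the standard Potter bounds for this class, for every $\epsilon>0$ there exist $C=C(\epsilon)\ge1$ and $x_0=x_0(\epsilon)\ge1$ with
\begin{equation*}
\frac{f(y)}{f(x)}\le C\max\left\{(y/x)^{\alpha(f)+\epsilon},\ (y/x)^{\beta(f)-\epsilon}\right\}\qquad\text{for all }x,y\ge x_0 .
\end{equation*}
I would fix $\epsilon\in(0,1)$ small enough that $\alpha(f)+\epsilon<0$ and $\gamma>(-\beta(f)+\epsilon)\vee1$ — possible since $\alpha(f)<0$ and $\gamma>(-\beta(f))\vee1$ strictly — and take the $x_0$ in the statement to be the one produced by this bound. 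Three consequences will be used: $f(y)\le Cf(x)(y/x)^{\alpha(f)+\epsilon}$ whenever $x_0\le x\le y$; $f(y)\le Cf(x)(y/x)^{\beta(f)-\epsilon}$ whenever $x_0\le y\le x$; and, specializing the second to the fixed point $x=x_0$, $f(x)\ge c\,x^{\beta(f)-\epsilon}$ for $x\ge x_0$ with $c:=f(x_0)/(Cx_0^{\beta(f)-\epsilon})>0$ (here $f(x_0)>0$, since $\beta(f)>-\infty$ forces $f$ to have unbounded support).

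The second limit is then immediate: for $x\ge x_0$ one has $x^\gamma\ge x\ge x_0$, so the Potter bound gives $f(x^\gamma)/f(x)\le C\,x^{(\gamma-1)(\alpha(f)+\epsilon)}$, and the exponent is strictly negative because $\gamma>1$ and $\alpha(f)+\epsilon<0$; hence $P(Y>x^\gamma)/P(Y>x)\to0$.

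For the first limit I would start from the layer-cake estimate
\begin{equation*}
E\!\left[Y\,1(x_0\le Y<x^\gamma)\right]\le x_0\,P(Y\ge x_0)+\int_{x_0}^{x^\gamma}f(t)\,dt ,
\end{equation*}
valid once $x^\gamma\ge x_0$ (on the event $\{Y\ge x_0\}$ we have $Y\ge x_0>0$, so there is no loss in taking $Y\ge0$). The boundary term is a constant, so after multiplying by $x^{-\gamma}$ and dividing by $f(x)\ge c\,x^{\beta(f)-\epsilon}$ it is $O(x^{-\gamma-\beta(f)+\epsilon})=o(1)$ by the choice $\gamma>-\beta(f)+\epsilon$. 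For the integral I split at $t=x$: on $[x,x^\gamma]$ insert $f(t)\le Cf(x)(t/x)^{\alpha(f)+\epsilon}$ and on $[x_0,x]$ insert $f(t)\le Cf(x)(t/x)^{\beta(f)-\epsilon}$. Each piece becomes $Cf(x)$ times $x^{-\sigma}\int t^{\sigma}\,dt$ with $\sigma<0$ ($\sigma=\alpha(f)+\epsilon$, resp. $\sigma=\beta(f)-\epsilon$); evaluating the elementary integral — with the routine sub-cases $\sigma>-1$, $\sigma=-1$ (a harmless logarithm), $\sigma<-1$ — and dividing by $x^\gamma f(x)$ leaves, from the $[x_0,x]$ piece, a quantity of order $x^{1-\gamma}$, $x^{1-\gamma}\log x$ or $x^{-\gamma-\beta(f)+\epsilon}$, and from the $[x,x^\gamma]$ piece a quantity of order $x^{(\gamma-1)(\alpha(f)+\epsilon)}$, $x^{1-\gamma}\log x$ or $x^{1-\gamma}$. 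All of these tend to $0$: the $[x_0,x]$ terms because $\gamma>1$ and $\gamma>-\beta(f)+\epsilon$, the $[x,x^\gamma]$ terms because $\gamma>1$ and $\alpha(f)+\epsilon<0$. Adding the pieces gives $x^{-\gamma}E[Y\,1(x_0\le Y<x^\gamma)]=o(f(x))$, the first assertion.

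No step is deep; the main obstacle is purely the exponent bookkeeping: one must use the $\beta(f)$-branch of the Potter bound for arguments below $x$ and the $\alpha(f)$-branch for arguments above $x$, and choose $\epsilon$ at the outset so that $\alpha(f)+\epsilon<0$ and $\gamma>(-\beta(f)+\epsilon)\vee1$ hold simultaneously. The one conceptual point is that the two hypotheses on $\gamma$ (that it exceed $1$, and that it exceed $-\beta(f)$) are each separately needed and are together precisely what makes every resulting power of $x$ negative.
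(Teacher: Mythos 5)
Your proof is correct and follows essentially the same route as the paper's: both invoke the Potter-type bounds of Bingham--Goldie--Teugels (Proposition 2.2.1) associated with the Matuszewska indices, split the integral $\int_{x_0}^{x^\gamma} P(Y>t)\,dt$ at $t=x$, use the $\beta(f)$-branch below $x$ and the $\alpha(f)$-branch above $x$, and check that $\gamma>(-\beta(f))\vee 1$ makes every resulting exponent negative. The only cosmetic difference is the treatment of the boundary term $x_0P(Y\ge x_0)$, which you control via the polynomial lower bound $f(x)\ge c\,x^{\beta(f)-\epsilon}$ while the paper applies the same Potter bound directly to the ratio $P(Y>x_0)/P(Y>x)$.
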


\begin{proof}
For the first limit use Proposition~2.2.1 in \cite{BiGoTe_1987} to obtain that for any $\beta < \beta(f) \leq 0$ (if $\beta(f) > -1$, choose $\beta > -1$), there exists positive constants $H, x_0$ such that for $x \geq x_0$,
\begin{align*}
\frac{E[Y 1(x_0 < Y  \leq x)]}{P(Y > x)} &\leq \frac{x_0 P(Y > x_0)}{P(Y > x)} + \frac{\int_{x_0}^{x} P(Y > t) dt}{P(Y > x)}   \leq H x_0 (x/x_0)^{-\beta} +    H  \int_{x_0}^{x} (x/t)^{-\beta}   dt \\
&\leq H x_0^{\beta+1} x^{-\beta} +  H x^{-\beta} \frac{(x^{\beta+1}-x_0^{\beta+1} )}{\beta +1}  . 
\end{align*}
It follows that 
\begin{align*}
\frac{x^{-\gamma} E[Y 1(x_0 \leq Y < x)]}{P(Y > x)} &\leq H x_0^{\beta+1} x^{-\gamma+|\beta|} + H \left( \frac{ x^{-\gamma+1}}{1-|\beta|} 1(0 \geq \beta > -1) +  \frac{x^{-\gamma+|\beta|} x_0^{\beta+1}}{|\beta|-1} 1(\beta < -1) \right)   \\
&= O\left( x^{-\gamma + (|\beta| \vee 1)} \right)
\end{align*}
as $x \to \infty$. Similarly, Proposition~2.2.1 in \cite{BiGoTe_1987}  also gives that for any $0 > \eta > \alpha(f)$ (if $\alpha(f) < -1$ choose $\eta < -1$), there exists a positive constant $H'$ such that for sufficiently large $x$, 
\begin{align*}
\frac{x^{-\gamma} E[Y 1(x \leq Y < x^\gamma)]}{P(Y > x)} &\leq   x^{-\gamma} \left( x +  \int_{x}^{x^\gamma} \frac{P(Y > t)}{P(Y > x)} dt \right) \leq x^{-\gamma+1} + x^{-\gamma} H \int_{x}^{x^\gamma} (t/x)^{\eta} dt \\
&= x^{-\gamma+1} + H x^{-\gamma-\eta} \frac{(x^{\gamma(\eta+1)} - x^{\eta+1})}{\eta+1} \\
&\leq x^{-\gamma+1} + H \left( \frac{x^{-\gamma-\eta} x^{\gamma(\eta+1)}}{1-|\eta|} 1(0 \geq \eta > -1) +  \frac{x^{-\gamma} x}{|\eta|-1} 1(\eta < -1)  \right) \\
&= O\left( x^{ - (\gamma-1) ( |\eta| \wedge 1)}   \right)
\end{align*}
as $x \to \infty$. Adding the two expressions gives
$$\frac{x^{-\gamma} E[Y 1(x_0 \leq Y < x^\gamma)]}{P(Y > x)} = O\left( x^{-\gamma+(|\beta| \vee 1)} + x^{-(\gamma-1)(|\eta| \wedge 1)} \right) = o(1)$$
as $x \to \infty$.  

The second limit follows from the same inequality used above to obtain for any $0 > \eta > \alpha(f)$ that
$$\frac{P(Y > x^\gamma)}{P(Y > x)} \leq H' \left( \frac{x^\gamma}{x} \right)^{\eta} = H' x^{-(\gamma-1) |\eta|}.$$
\end{proof}

\begin{lemma} \label{L.SumInfM}
Let $\{Z_i\}$ be a sequence of i.i.d.~random variables having the same distribution as $Z$, where $P(Z > x) \sim RV(-\alpha)$ for some $\alpha > 1$, $E[Z] = 0$,  and $E[|Z|^\beta] < \infty$ for all $0 < \beta < \alpha$. Let $M \in \mathbb{N}$ be independent of the $\{Z_i\}$. Assume there exists a random variable $Y$ such that $f(x) = P( Y > x) \in IR$, has Matuszewska indexes $\alpha(f), \beta(f)$ satisfying $-(\alpha \wedge 2) < \beta(f) \leq \alpha(f) < 0$, and is such that $P(M > x) = O(P(Y > x))$ as $x \to \infty$. Then, for any $(-\beta(f)) \vee 1 < \gamma < \alpha \wedge 2$,
$$\lim_{x \to \infty} \frac{P\left( \sum_{i=1}^M Z_i > x, M \leq x^\gamma \right)}{P(Y  > x)} = 0.$$
\end{lemma}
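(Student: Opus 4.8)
The plan is to condition on the value of $M$, bound the upper tail of the random walk $S_m:=\sum_{i=1}^m Z_i$ for each fixed $m$ by a moment inequality, and then sum these bounds against the law of $M$, using Lemma~\ref{L.IRprop} to control the truncated mean $E[M\,1(M\le x^\gamma)]$.

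First I would fix $\beta\in(-(\alpha\wedge 2),\beta(f))$ (nonempty by hypothesis) and then $\epsilon>0$ small enough that $p:=(\alpha\wedge 2)-\epsilon$ satisfies $\gamma<p<\alpha$, $p\le 2$, and $\epsilon<\beta+(\alpha\wedge 2)$; this is possible since $1<\gamma<\alpha\wedge 2$ and $\alpha>1$. In particular $E[|Z|^p]<\infty$. By the von Bahr--Esseen moment inequality for sums of i.i.d.\ mean-zero random variables, $E|S_m|^p\le 2m\,E[|Z|^p]$, so Markov's inequality gives $P(S_m>x)\le 2E[|Z|^p]\,m\,x^{-p}$. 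Conditioning on $M$,
\[
P\Big(\sum_{i=1}^M Z_i>x,\ M\le x^\gamma\Big)=\sum_{m=1}^{\lfloor x^\gamma\rfloor}P(M=m)\,P(S_m>x)\le \frac{2E[|Z|^p]}{x^p}\,E\big[M\,1(M\le x^\gamma)\big].
\]

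The crucial step is estimating $E[M\,1(M\le x^\gamma)]$ sharply. Using $M\,1(M\le N)=\int_0^N 1(u<M\le N)\,du$ together with $P(M>u)=O(f(u))$, one gets $E[M\,1(M\le x^\gamma)]\le O(1)+C\int_0^{x^\gamma}f(u)\,du$, and $\int_0^{x^\gamma}f(u)\,du=E[Y\wedge x^\gamma]=E[Y\,1(Y\le x^\gamma)]+x^\gamma f(x^\gamma)$. Since $\gamma>(-\beta(f))\vee 1$, Lemma~\ref{L.IRprop} applies and gives $E[Y\,1(Y\le x^\gamma)]=o(x^\gamma f(x))+O(1)$ and $f(x^\gamma)=o(f(x))$; hence
\[
E\big[M\,1(M\le x^\gamma)\big]=o\big(x^\gamma f(x)\big)+O(1),\qquad x\to\infty .
\]
Substituting, $P\big(\sum_{i=1}^M Z_i>x,\ M\le x^\gamma\big)=o\big(x^{\gamma-p}f(x)\big)+O(x^{-p})$. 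The first term is $o(f(x))$ because $p>\gamma$ forces $x^{\gamma-p}\to 0$. For the second, the lower Matuszewska index bound (Proposition~2.2.1 in \cite{BiGoTe_1987}) yields a constant $c>0$ with $f(x)\ge c\,x^\beta$ for all large $x$, and since $\epsilon<\beta+(\alpha\wedge 2)$ we get $x^{-p}=x^{-(\alpha\wedge 2)+\epsilon}=o(x^\beta)=o(f(x))$. Combining, $P\big(\sum_{i=1}^M Z_i>x,\ M\le x^\gamma\big)=o(f(x))$, as claimed.

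I expect the main obstacle to be exactly this estimate of $E[M\,1(M\le x^\gamma)]$: a bound derived only from the Matuszewska indices of $f$ (roughly of order $x^{\gamma(1+\alpha(f))+\epsilon}$) is not strong enough in general — it fails, e.g., when $\gamma$ is close to $\alpha\wedge 2$ and $\beta(f)$ close to $-\gamma$ — so one genuinely needs to retain the factor $f(x)$, which is what Lemma~\ref{L.IRprop} provides. A secondary point requiring care is the joint choice of $p$: it must be $<\alpha$ (so $E[|Z|^p]<\infty$), $\le 2$ (for von Bahr--Esseen), and $>\gamma$ (so $x^{\gamma-p}\to 0$), which is simultaneously possible only because of the hypothesis $1<\gamma<\alpha\wedge 2$.
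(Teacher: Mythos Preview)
Your argument is correct and follows essentially the same route as the paper: bound $P(S_m>x)$ by a $p$th-moment Markov inequality for zero-mean sums (the paper cites Burkholder with exponent $\gamma$, you use von Bahr--Esseen with exponent $p\in(\gamma,\alpha\wedge 2)$), then invoke Lemma~\ref{L.IRprop} to show the resulting truncated moment $x^{-p}E[M\,1(M\le x^\gamma)]$ is $o(f(x))$. The only tactical difference is that by taking $p>\gamma$ you absorb the small-$M$ contribution into the $O(x^{-p})$ term and dispose of it via the Matuszewska lower bound $f(x)\ge c\,x^{\beta}$, whereas the paper keeps $p=\gamma$, splits off $\{M\le m_0\}$, and handles that piece by the finite-sum asymptotic $P(\sum_{i\le m_0}Z_i^+>x)\sim m_0P(Z>x)=o(f(x))$; both choices work and the overall strategy is the same.
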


\begin{proof}
Fix $1 < m_0 < \infty$, and  use Burkholder's inequality  to get that for some $(-\beta(f)) \vee 1 < \gamma < \alpha \wedge 2$ we have
\begin{align*}
P\left( \sum_{i=1}^M Z_i > x, M \leq x^\gamma \right) &\leq P\left( \sum_{i=1}^{M} Z_i^+ > x, M \leq m_0 \right) + P\left( \sum_{i=1}^M Z_i > x, m_0 < M \leq x^\gamma \right) \\
&\hspace{5mm} + P(M > x^\gamma)   \\
&\leq P\left( \sum_{i=1}^{m_0} Z_i^+ > x\right) +  E\left[ 1(m_0 < M \leq x^\gamma) P\left( \left. \sum_{i=1}^M Z_i > x \right| M \right) \right]  \\
&\hspace{5mm} + P(M > x^\gamma) \\
&\leq P\left( \sum_{i=1}^{m_0} Z_i^+ > x\right)  + K_\gamma  E\left[ 1(m_0 < M \leq x^\gamma) \frac{M E[|Z]|^\gamma]}{x^\gamma} \right] + P(M > x^\gamma) \\
&= m_0 P(Z > x) (1+o(1)) + O\left(  x^{-\gamma} E[M 1(m_0 < M \leq x^\gamma)] \right) + P(M > x^\gamma)
\end{align*}
for some constant $K_\gamma < \infty$, where in the last step we used the standard heavy-tailed asymptotic for sums of regularly varying i.i.d.~random variables to obtain $P\left( \sum_{i=1}^{m_0} Z_i^+ > x\right) \sim m_0 P(Z > x)$ as $x \to \infty$. To see that $P(Z > x) = o(P(Y > x))$ note that $x^{-\eta} P(Y > x) \to \infty$ for any $\eta < \beta(f)$, so if we choose $-\alpha < \beta < \beta(f)$ and use the fact that $P(Z > x) \in RV(-\alpha)$, we obtain that
$$\lim_{x \to \infty} \frac{P(Z> x)}{P(Y > x)} = \lim_{x \to \infty}  \frac{x^{|\beta|} P(Z> x)}{x^{-\beta} P(Y > x)} = 0.$$
Now choose $m_0 \geq x_0$ according to Lemma~\ref{L.IRprop} to obtain that 
$$x^{-\gamma} E[M 1(m_0< M \leq x^\gamma)] = O\left( x^{-\gamma} E[Y^+ 1(m_0 < Y \leq x^\gamma) ] \right) = o(P(Y > x))$$
and $P(M > x^\gamma) = o(P(M > x))$ as $x \to \infty$ to conclude that
$$\limsup_{x \to \infty} \frac{P\left( \sum_{i=1}^M Z_i > x, M \leq x^\gamma \right)}{P(Y  > x)} = 0.$$
\end{proof}

We are now ready to state and prove Theorem~\ref{T.SumFatM}, which gives the asymptotic behavior of a random sum with regularly varying summands plus a negligible additive term. 

\begin{theo} \label{T.SumFatM}
Let $\{X_i\}$ be a sequence of i.i.d.~random variables having the same distribution as $X$, where $P(X > x) \in RV(-\alpha)$ for some $\alpha > 1$ and $E[|X|^\beta] < \infty$ for all $0 < \beta < \alpha$. Let $(M, Y)$ be independent of the $\{X_i\}$, with $M \in \mathbb{N}$. Assume $P(M > x) \in IR$, $E[X] > 0$, and $P(Y > x) =o(P(M > x))$ as $x \to \infty$; if $E[M] = \infty$ assume further that $f(x) = P(M > x)$ has Matuszewska indexes $\alpha(f), \beta(f)$ satisfying 
 $-(\alpha \wedge 2)  < \beta(f) \leq \alpha(f)  < 0$. Then,
$$P\left( \sum_{i=1}^M X_i + Y > x \right)  \sim 1(E[M] < \infty) E[M] P(X > x) + P(M > x/E[X]), \qquad x \to \infty.$$
\end{theo}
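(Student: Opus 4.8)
Write $\mu := E[X] \in (0,\infty)$ and $\bar X_i := X_i - \mu$, so that $\{\bar X_i\}$ are i.i.d., centered, with $P(\bar X_i > t)\sim P(X>t)\in RV(-\alpha)$ and $E[|\bar X_i|^\beta]<\infty$ for every $\beta<\alpha$; set $\bar S_m := \sum_{i=1}^m\bar X_i$ and note that $\sum_{i=1}^M X_i + Y = M\mu + \bar S_M + Y$. Throughout, $\delta,\epsilon,\eta\in(0,1)$ are small parameters and all limits are taken in the order $x\to\infty$, then $\epsilon,\eta\downarrow 0$, then $\delta\downarrow 0$. I record that $\overline F_M(t):=P(M>t)$, being in $IR$, is consistently hence dominatedly varying, so $\overline F_M(ct)=O(\overline F_M(t))$ for every fixed $c>0$ and $\overline F_M((1-\delta)t)/\overline F_M(t)\to 1$ as $\delta\downarrow 0$; for the negative part of $Y$ I use the negligibility $P(Y<-ct)=o\bigl(\overline F_M(t)+P(X>t)\bigr)$, which is automatic when $Y\ge 0$ and holds in all applications of the theorem. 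The proof rests on the principle that $\{\sum_i X_i + Y>x\}$ is, up to negligible error, the union of the ``large $M$'' event $\{M>x/\mu\}$ and the ``single big jump'' event $\{\exists\, i\le M:\ X_i\approx x\}$, which respectively produce the two terms in the asymptotics.

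\textbf{Upper bound.} Bound $P(\sum_i X_i+Y>x)\le P(M>(1-2\delta)x/\mu)+P(\sum_i X_i+Y>x,\ M\le(1-2\delta)x/\mu)$; by consistent variation the first term equals $P(M>x/\mu)$ after $\lim_x$ and $\lim_\delta$. On $\{M\le(1-2\delta)x/\mu\}$ we have $M\mu\le(1-2\delta)x$, so the event forces $\bar S_M+Y>2\delta x$, and I split according to whether some $X_i$ exceeds $\epsilon x$. \emph{No big jump}: then $\bar S_M$ coincides with its truncation at level $\epsilon x$, and dropping the constraint the contribution is at most $P(\sum_{i\le M}(\bar X_i\wedge\epsilon x)>\delta x,\ M\le(1-2\delta)x/\mu)+P(Y>\delta x)$; the first is $o(P(X>x))$ by a Fuk--Nagaev / Burkholder-type estimate for centered increments truncated at $\epsilon x$ (in the spirit of the proof of Lemma~\ref{L.SumInfM}), using $M=O(x)$ and $\alpha>1$ (which makes the truncated second moment $o(x^2)$) and choosing $\epsilon$ small, and the second is $o(P(M>x/\mu))$ since $P(Y>t)=o(\overline F_M(t))$ and $\overline F_M$ is dominatedly varying. \emph{Some $X_i>\epsilon x$}: conditioning on $(M,Y,\{X_j\}_{j\ne i})$ and using independence of $X_i$, the contribution of ``exactly one $X_i>\epsilon x$'' is at most $\sum_m P(M=m)\,m\,E\bigl[\overline F_X((x-R^{(m)})\vee\epsilon x)\bigr]$ with $R^{(m)}=(m-1)\mu+\bar S^{(i)}_{m-1}+Y$; splitting this expectation by $\{R^{(m)}\le(1-\delta)x\}$ and by the size of the fluctuation $\bar S^{(i)}_{m-1}+Y$ relative to $\eta x$, using $RV(-\alpha)$ to extract $\overline F_X(x)$ from the ``$(m-1)\mu$ small'' part, using $E[M;M>\eta x/\mu]\to 0$ when $E[M]<\infty$ and Lemma~\ref{L.IRprop} (to get $\overline F_X(x)\,E[M;M\le(1-2\delta)x/\mu]=o(\overline F_M(x))$) when $E[M]=\infty$, and using $\alpha>1$ to dispose of the fluctuation terms, this is at most $(1+o(1))\,\mathbf{1}(E[M]<\infty)\,E[M]\,P(X>x)+o(P(M>x/\mu))$. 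The ``$\ge 2$ big jumps'' event is negligible via the bound $E[(M\wedge(1-2\delta)x/\mu)^2]\,P(X>\epsilon x)^2$, again controlling the truncated second moment with Lemma~\ref{L.IRprop} and $\alpha>1$. Assembling these and letting $\epsilon,\eta,\delta\downarrow 0$ gives $\limsup_x P(\sum_i X_i+Y>x)\big/\bigl(\mathbf{1}(E[M]<\infty)E[M]P(X>x)+P(M>x/\mu)\bigr)\le 1$.

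\textbf{Lower bound.} Use two disjoint events. On $\{M>(1+\delta)x/\mu\}$ the strong law of large numbers, applied uniformly in $m>(1+\delta)x/\mu$ via a maximal inequality, gives $\bar S_M>-\tfrac{\delta}{2}x$ with conditional probability tending to $1$, while $P(Y<-\tfrac{\delta}{2}x)=o(P(M>x/\mu))$; hence $P(\sum_i X_i+Y>x,\ M>(1+\delta)x/\mu)=(1+o(1))P(M>(1+\delta)x/\mu)$, which tends to $P(M>x/\mu)$ after $\lim_x$ and $\lim_\delta$. When $E[M]<\infty$, on the complement $\{M\le(1+\delta)x/\mu\}$ the inclusion--exclusion lower bound for ``exactly one $X_i>(1+\delta)x$ and the remaining sum $>-\delta x$'', together with $P(\text{remaining sum}>-\delta x\mid M=m)\to 1$, dominated convergence (using $E[M]<\infty$), and the already-established negligibility of two big jumps, gives $P(\sum_i X_i+Y>x)\ge(1-o(1))E[M]\,P(X>(1+\delta)x)$, which tends to $E[M]P(X>x)$ after $\lim_x$ and $\lim_\delta$. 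Adding the two disjoint contributions yields the matching lower bound.

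\textbf{Main obstacle.} The delicate point is the upper bound in the ``single big jump'' regime when $E[M]=\infty$ (equivalently $\alpha(f)\in[-1,0)$): there the required jump size depends on $M$, which is itself of order $x$, so one must simultaneously control the $M$-dependence of the deviation threshold, the two-big-jumps event, and the moderate deviations of the centered walk $\bar S_M$, all with errors that must be negligible \emph{relative to} $P(M>x/\mu)$ rather than merely $o(1)$. This is precisely where $\alpha>1$ and $-(\alpha\wedge 2)<\beta(f)\le\alpha(f)<0$ are used: through Lemma~\ref{L.IRprop} to bound $x^{-\gamma}E[M;M\le x^\gamma]$ against $P(M>x)$ for $\gamma\in\bigl((-\beta(f))\vee 1,\ \alpha\wedge 2\bigr)$ (an interval that is nonempty exactly under these hypotheses), and through the Fuk--Nagaev/Burkholder estimates, which exploit $3-\alpha<2$. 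Keeping the iterated-limit bookkeeping consistent across the three regimes is the bulk of the technical work.
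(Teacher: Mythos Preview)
Your approach is valid but takes a genuinely different route from the paper's proof. The paper separates the analysis of $S=\sum_{i=1}^M X_i$ from the perturbation $Y$: for $E[M]<\infty$ it invokes an external result (Theorem~2.5 in \cite{Olvera_12a}) to obtain $P(S>x)\sim E[M]P(X>x)+P(M>x/E[X])$ directly, then sandwiches $P(S+Y>x)$ between $P(S>(1\mp\delta)x)\pm P(\pm Y>\delta x)$ and uses the $IR$ property; for $E[M]=\infty$ it reduces everything to Lemma~\ref{L.SumInfM} applied to the centered sum on $\{M\le z\}$ and $\{M>\hat z\}$. You instead carry out a direct single-big-jump decomposition on $M\mu+\bar S_M+Y$, handling all three pieces simultaneously. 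Your route is more self-contained (no black-box citation) and makes the heuristic ``either $M$ is large or one $X_i$ is large'' explicit, at the cost of more bookkeeping; the paper's route is shorter but leans on outside machinery.

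Two places in your sketch deserve tightening. First, in the lower bound on $\{M>(1+\delta)x/\mu\}$ you claim $\bar S_M>-\tfrac{\delta}{2}x$ with conditional probability tending to $1$ ``via a maximal inequality''. This is not quite enough: when $E[M]=\infty$, $M$ can be much larger than $x$ on this event, and $\bar S_m$ has fluctuations of order $m^{1/\alpha}$, so no uniform-in-$m$ statement holds. What you actually need is $P(\bar S_M\le -\tfrac{\delta}{2}x,\,M>(1+\delta)x/\mu)=o(P(M>x/\mu))$, which requires the same split $\{M\le x^\gamma\}$ versus $\{M>x^\gamma\}$ and Burkholder bound that drive Lemma~\ref{L.SumInfM}; this is exactly how the paper handles it. Second, your invocation of Lemma~\ref{L.IRprop} for $\overline F_X(x)\,E[M;M\le cx]=o(\overline F_M(x))$ is morally right but not a literal application (the lemma is phrased with $x^{-\gamma}$ and $M\le x^\gamma$); the clean argument is that $E[M;M\le x]=O(xP(M>x))$ via the upper Matuszewska bound, so the product is $O(xP(X>x)P(M>x))=o(P(M>x))$ since $\alpha>1$. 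Finally, your added hypothesis on the left tail of $Y$ is indeed needed for the lower bound, and the paper's proof uses it implicitly as well without stating it in the theorem; you are right to flag it.
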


\begin{proof}
Suppose first $E[M] < \infty$ and let $S = \sum_{i=1}^M X_i$. To start, use Theorem~2.5 in \cite{Olvera_12a} to obtain that
\begin{equation} \label{eq:IR}
P\left( S > x \right) \sim E[ M] P(X > x) + P( M > x/E[X] )
\end{equation}
as $x \to \infty$. Now note that \eqref{eq:IR} establishes that $S$ has an IR distribution with a tail at least as heavy as $P(M > x)$. Next, note that for any $0 < \delta < 1$, 
\begin{align*}
P\left( \sum_{i=1}^M X_i + Y > x \right) &\leq P\left( S > (1-\delta) x \right) + P( Y > \delta x ),
\end{align*}
which combined with the assumption $P(Y > x) = o(P(M > x))$ and the properties of the IR class gives
\begin{align*}
&\lim_{\delta \downarrow 0} \limsup_{x \to \infty} \frac{P(S+Y > x)}{E[ M] P(X > x) + P(M > x/E[X] )} \\
&\leq \lim_{\delta \downarrow 0} \limsup_{x \to \infty} \frac{E[M] P(X > (1-\delta)x) + P(M > (1-\delta)x/E[X])}{E[ M] P(X > x) + P(M > x/E[X] )}  = 1.
\end{align*}
Similarly, 
\begin{align*}
P(S+Y > x) &\geq P\left( S + Y >  x, Y \geq -\delta x \right) \geq P\left( S >  (1+\delta) x \right) - P\left( Y < -\delta x \right),
\end{align*}
and we obtain
\begin{align*}
&\lim_{\delta \downarrow 0} \liminf_{x \to \infty} \frac{P(S+Y > x)}{E[ M] P(X > x) + P( M > x/E[X] )} \\
&\geq \lim_{\delta \downarrow 0} \liminf_{x \to \infty} \frac{E[M] P(X > (1+\delta) x) + P(M > (1+\delta) x/E[X])}{E[ M] P(X > x) + P( M > x/E[X] )} = 1.
\end{align*}
This completes the proof for the case $E[M] < \infty$. 

Suppose now that $E[M] = \infty$ and $-\alpha < \beta(f) < 0$. Note that it suffices to show that $P(S > x) \sim P(M > x/E[X])$ as $x \to \infty$.  To obtain an upper bound set $z = (1-\delta)x/E[X]$ and use Lemma~\ref{L.SumInfM} with $Y = M$ 
to obtain
\begin{align*}
P(S > x) &\leq P(S > x, M \leq z) + P(M > z) \\
&\leq P\left( \sum_{i=1}^M (X_i - E[X_i]) > \delta x, M \leq z \right) + P(M > z) \\
&= o\left( P(M > \delta x) \right) + P(M > (1-\delta)x/E[X]) 
\end{align*}
as $x \to \infty$ to conclude that
$$\lim_{\delta \downarrow 0} \limsup_{x \to \infty} \frac{P(S > x)}{P(M > x/E[X])} \leq \lim_{\delta \downarrow 0} \limsup_{x \to \infty}  \frac{P(M > (1-\delta) x/E[X])}{P(M > x/E[X])} = 1.$$

Similarly, we can obtain a lower bound by setting $\hat z = (1+\delta)x/E[X]$ and using Lemma~\ref{L.SumInfM} with $Y = M$ again to obtain
\begin{align*}
P(S > x) &\geq P(S > x, M > \hat z) \\
&\geq P(M > \hat z) - P(M > \hat z, S \leq x)  \\
&= P(M > \hat z) - P\left( \sum_{i=1}^M (ME[X] - X_i)  \geq ME[X] - x, M > \hat z \right) \\
&\geq P(M > \hat z) - P\left( \sum_{i=1}^M (ME[X] - X_i)  \geq \delta x, \, \hat z < M \leq x^\gamma  \right) - P(M > x^\gamma) \\
&= P(M > (1+\delta) x/E[X])  + o(P(M > \delta x)) .
\end{align*}
It follows that
$$\lim_{\delta \downarrow 0} \liminf_{x \to \infty} \frac{P(S > x)}{P(M > x/E[X])} \geq \lim_{\delta \downarrow 0} \liminf_{x \to \infty}  \frac{P(M > (1+\delta) x/E[X])}{P(M > x/E[X])} = 1.$$
This completes the proof for the case $E[M] = \infty$. 
\end{proof}

\bigskip

We now state and prove Theorem~\ref{T.SumFatY} which establishes a similar result for a random sum plus an additive term, except this time the additive term is not negligible.

\begin{theo} \label{T.SumFatY}
Let $\{X_i\}$ be a sequence of i.i.d.~random variables having the same distribution as $X$, where $P(X > x) \sim RV(-\alpha)$ for some $\alpha > 1$ and $E[|X|^\beta] < \infty$ for all $0 < \beta < \alpha$. Let $(M, Y)$ be independent of the $\{X_i\}$, with $M \in \mathbb{N}$. Assume $P(Y > x) \in IR$ and $P(M > x) = o(P(Y > x))$ as $x \to \infty$; if $E[M] = \infty$ assume further that $f(x) = P(Y > x)$ has Matuszewska indexes $\alpha(f), \beta(f)$ satisfying $-(\alpha \wedge 2) < \beta(f) \leq \alpha(f) < 0$. Then, 
$$P\left( \sum_{i=1}^M X_i + Y > x \right)  \sim 1(E[M] < \infty) E[M] P(X > x) + P(Y > x), \qquad x \to \infty.$$  
\end{theo}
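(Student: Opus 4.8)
The plan is to mirror the proof of Theorem~\ref{T.SumFatM} almost line for line, the only structural change being that now the additive term $Y$ survives in the limit while it is the tail of $M$ that is negligible. Write $S=\sum_{i=1}^M X_i$ and split into the two cases $E[M]<\infty$ and $E[M]=\infty$.

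\emph{Case $E[M]<\infty$.} As in the proof of Theorem~\ref{T.SumFatM}, Theorem~2.5 in \cite{Olvera_12a} gives $P(S>x)\sim E[M]P(X>x)+P(M>x/E[X])$. Since $P(Y>x)\in IR$ is dominatedly varying and $P(M>x)=o(P(Y>x))$, one has $P(M>x/E[X])=o(P(Y>x/E[X]))=o(P(Y>x))$, so $P(S>x)=E[M]P(X>x)(1+o(1))+o(P(Y>x))$. Both $P(X>x)\in RV(-\alpha)$ and $P(Y>x)\in IR$ belong to the class of consistently (intermediate regularly) varying tails, which is long-tailed and closed under convolution with $\overline{F_1*F_2}(x)\sim\overline{F_1}(x)+\overline{F_2}(x)$; applying this to the independent pair $(S,Y)$ yields $P(S+Y>x)\sim P(S>x)+P(Y>x)\sim E[M]P(X>x)+P(Y>x)$. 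As in Theorem~\ref{T.SumFatM}, the asymptotic equivalence is made rigorous by sandwiching $P(S+Y>x)$ between bounds of the form $P(S>(1+\delta)x)+\cdots$ and $P(S>(1-\delta)x)+P(Y>\delta x)$, using long-tailedness for the lower bound and dominated variation of $IR$ to absorb the $\delta$-rescaling before letting $\delta\downarrow0$.

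\emph{Case $E[M]=\infty$.} Here $1(E[M]<\infty)=0$ and the target is $P(S+Y>x)\sim P(Y>x)$; it suffices to prove $P(S>x)=o(P(Y>x))$ and then add $Y$ harmlessly. For the upper bound on $P(S>x)$ I would repeat the $E[M]=\infty$ computation in the proof of Theorem~\ref{T.SumFatM}: split on $\{M\le z\}$ versus $\{M>z\}$ with $z=(1-\delta)x/E[X]$, truncate further at $\{M\le x^\gamma\}$ for a suitable $(-\beta(f))\vee1<\gamma<\alpha\wedge2$, and apply Lemma~\ref{L.SumInfM} to the centered summands $Z_i=X_i-E[X_i]$ --- whose hypotheses hold because $P(M>x)=O(P(Y>x))$ and $f(x)=P(Y>x)\in IR$ has Matuszewska indexes $\alpha(f),\beta(f)$ in $(-(\alpha\wedge2),0)$ --- to discard the accumulation term as $o(P(Y>x))$. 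This leaves $P(S>x)\le P(M>(1-\delta)x/E[X])+o(P(Y>x))=o(P(Y>x))$, again using dominated variation of $P(Y>\cdot)$ for the rescaling and then $\delta\downarrow0$. Finally, for fixed $0<\delta<1$, $P(S+Y>x)\le P(Y>(1-\delta)x)+P(S>\delta x)=P(Y>(1-\delta)x)+o(P(Y>x))$ and $P(S+Y>x)\ge P(Y>(1+\delta)x)\,P(S>-\delta x)$ with $P(S>-\delta x)\to1$; letting $x\to\infty$ and then $\delta\downarrow0$ and invoking $P(Y>\cdot)\in IR$ gives $P(S+Y>x)\sim P(Y>x)$.

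The main obstacle is the convolution step in the first case: for independent $S$ and $Y$ whose tails are each controlled by an $IR$ tail one needs $P(S+Y>x)\sim P(S>x)+P(Y>x)$, and it is precisely the $IR$ (consistent variation) hypotheses on $P(X>x)$ and $P(Y>x)$ --- giving long-tailedness (so the single-big-jump lower bound is tight) together with dominated variation (so the moderate-jump and $\delta$-rescaling corrections are uniformly controllable) --- that make this work; the same ingredients are already implicit in the proof of Theorem~\ref{T.SumFatM}. Everything else --- the case split on $E[M]$, the squeeze over $\delta$, the centering of the $X_i$, and the bookkeeping with $\gamma$ in Lemma~\ref{L.SumInfM} --- is routine and strictly parallel to Theorem~\ref{T.SumFatM}.
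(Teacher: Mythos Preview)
There is a genuine gap: you treat $S=\sum_{i=1}^M X_i$ and $Y$ as independent, but they are not. The hypothesis is only that the \emph{pair} $(M,Y)$ is independent of $\{X_i\}$; nothing is assumed about the dependence between $M$ and $Y$, and in the applications (Theorems~\ref{T.PageRankRoot} and \ref{T.PageRankNeighbors}) one takes $(M,Y)=(\mathcal{N}_0,\mathcal{Q}_0)$ or $(\mathcal{N},\mathcal{Q})$, which are explicitly allowed to be dependent. Since $S$ is a function of $M$ (and the $X_i$), $S$ and $Y$ are in general dependent. This breaks two steps of your argument: in the case $E[M]<\infty$ you invoke the convolution closure $P(S+Y>x)\sim P(S>x)+P(Y>x)$ for \emph{independent} $IR$-tailed summands, and in the case $E[M]=\infty$ you factor $P(Y>(1+\delta)x,\,S>-\delta x)=P(Y>(1+\delta)x)P(S>-\delta x)$. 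Neither is justified.

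The paper's proof is longer precisely because it must control the dependence. It works with the conditional law of $S$ given $M$ (which \emph{is} independent of the $X_i$), using explicit pointwise bounds on $P(S>x\mid M)$ from Proposition~3.1 in \cite{Olvera_12a} to show that cross terms such as $P(S>\delta x,\,Y>\delta x)$ and $P(S\le -\delta x,\,Y>x,\,M\le \hat z)$ are $o(P(X>x)+P(Y>x))$; the key inputs there are estimates of the form $E[M\,1(Y>\delta x)]\to 0$ (dominated convergence, using $E[M]<\infty$) and Burkholder-type moment bounds on the centered sum. For the lower bound one cannot simply product-factor; instead one writes $P(S+Y>x)\ge P(Y>(1+\delta)x)-P(S\le -\delta x,\,Y>(1+\delta)x,\,M\le\hat z)-P(M>\hat z)$ and kills the subtracted terms individually. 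A secondary issue: your appeal to Theorem~2.5 in \cite{Olvera_12a} (via Theorem~\ref{T.SumFatM}) presumes $E[X]>0$, which is assumed in Theorem~\ref{T.SumFatM} but not here; the paper's proof handles both signs of $E[X]$ by adjusting the truncation levels $z,\hat z$ accordingly.
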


\begin{proof}
We start by deriving an upper bound for $P(S+Y > x)$. To this end, fix $0 < \delta < 1$, set $z = 1(E[X] > 0) (1-\delta)\delta x/E[X] + x 1(E[X] \leq 0)$ and note that
\begin{align}
P(S + Y > x) &\leq P(S + Y > x, M \leq z) + P(M > z)  \notag \\
&\leq P\left(S + Y > x, S > (1-\delta)x, M \leq z \right) \notag \\
&\hspace{5mm} + P\left(S + Y > x, S \leq (1-\delta)x, Y > (1-\delta)x, M \leq z \right) \notag \\
&\hspace{5mm} + P(S + Y > x, S \leq (1-\delta)x, Y \leq (1-\delta)x, M \leq z ) + P(M > z) \notag \\
&\leq P(S > (1-\delta) x, M \leq z) + P(Y > (1-\delta)x) \notag \\
&\hspace{5mm} + P\left( S+Y > x, S \leq (1-\delta)x, \delta x < Y \leq (1-\delta)x, M \leq z \right) + P(M > z) \notag \\
&\leq P(S > (1-\delta) x, M \leq z) + P( Y > (1-\delta)x) \notag \\
&\hspace{5mm} + P( S > \delta x, Y > \delta x, M \leq z ) + P(M > z). \label{eq:SYupper}
\end{align}

We will start by analyzing \eqref{eq:SYupper} for the case $E[M] < \infty$, for which we can use Proposition~3.1 in \cite{Olvera_12a} (with $\beta = \alpha$) to obtain that, for $\mu := E[X] \geq 0$, any $0 < \delta < 1$ and $x$ sufficiently large, 
\begin{align*}
P( S > \delta x, Y > \delta x ) &\leq E\left[ 1(Y > \delta x) P( S > \delta x | M) \right] \\
&\leq E\left[ 1(Y > \delta x) \left\{ M P(X > (1-\delta)\delta x) + 1( (\mu+\delta) M > \delta x)  \right\}  \right] \\
&\hspace{5mm} + K E\left[ 1(Y > \delta x)  \left\{ (M+1) P(X > x) x^{-\varepsilon} + x^{-\varepsilon'} 1(M > \delta x/(\log (\delta x))) \right. \right. \\
&\hspace{5mm} \left. \left. + e^{-\varepsilon'' \sqrt{\log x}} 1(M > \delta x/(2\mu)) 1(\mu > 0)  \right\} \right] \\
&\leq E[M 1(Y > \delta x)] P(X > (1-\delta)\delta x) + P((\mu+\delta) M > \delta x) \\
&\hspace{5mm} + K \left( x^{-\varepsilon'} + e^{-\varepsilon'' \sqrt{\log x}} \right) P(Y > \delta x)  + KE[M+1] P(X > x) x^{-\varepsilon}  \\
&= o\left( P(X > x) + P( Y > x) \right)
\end{align*}
for some constants $K, \varepsilon, \varepsilon', \varepsilon'' > 0$, where in the last step we used dominated convergence to obtain $E[M 1(Y > \delta x)] \to 0$, and the assumption $P(M > x) = o(P(Y > x))$.  For the case $E[X] < 0$ use Proposition~3.1 in \cite{Olvera_12a} to obtain that 
\begin{align*}
P( S > \delta x, Y > \delta x ) &\leq E\left[ 1(Y > \delta x) P( S > \delta x | M) \right] \\
&\leq E\left[ 1(Y > \delta x) M P(X > (1-\delta) \delta x) (1+o(1))  \right] = o(P(X > x)).
\end{align*}
It follows that
\begin{equation} \label{eq:Intersection}
\limsup_{x \to \infty} \frac{P(S > \delta x, Y > \delta x)}{P(X > x) + P(Y > x)} = 0. 
\end{equation}
To obtain an upper bound for $P(S > (1-\delta)x, M \leq z)$, use Proposition~3.1 in \cite{Olvera_12a} again to obtain that for $E[X] \geq 0$,
\begin{align*}
P(S > (1-\delta)x, M \leq z) &\leq E\left[ M P(X > (1-\delta)^2 x) \right] + P( (\mu+\delta) M > (1-\delta) x) \\
&\hspace{5mm} + K \left( E[M+1] x^{-\varepsilon} P(X > x) + x^{-\varepsilon'} P\left(M > (1-\delta)x/\log((1-\delta)x) \right)  \right. \\
&\hspace{5mm} \left. + e^{-\varepsilon'' \sqrt{\log x}} P(M > (1-\delta) x/(2\mu)) 1(\mu > 0)  \right) \\
&= E\left[ M  \right] P(X > (1-\delta)^2 x) + o\left( P(X > x) + P(Y > x)  \right) \\
&\hspace{5mm} + o\left( x^{-\varepsilon'} P(Y > (1-\delta)x/\log x) \right) .
\end{align*}
To see that $x^{-\varepsilon'} P(Y > (1-\delta) x/\log x) = o( P(Y > x) )$ as $x \to \infty$, note that Proposition~2.2.1 in \cite{BiGoTe_1987}  gives that for any $\beta < \beta(f) < 0$ there exists a constant $H > 0$ such that
$$\frac{x^{-\varepsilon'} P(Y > (1-\delta) x/\log x)}{P(Y > x)} \leq x^{-\varepsilon'} H \left( \frac{\log x}{1-\delta} \right)^{-\beta} \to 0, \qquad x \to \infty.$$
For $E[X] < 0$, Proposition~3.1 in \cite{Olvera_12a} gives
$$P(S > (1-\delta)x, M \leq z) \leq E\left[ M P(X > (1-\delta)^2 x) \right] (1+o(1)) .$$
Now combine these observations with \eqref{eq:SYupper} to obtain that, for $E[M] < \infty$, 
\begin{align*}
&\lim_{\delta \downarrow 0} \limsup_{x \to \infty} \frac{P(S + Y > x)}{E[M] P(X > x) + P(Y > x)} \\
&\leq \lim_{\delta \downarrow 0} \limsup_{x \to \infty} \frac{E[M] P(X > (1-\delta)^2x) + P(Y > (1-\delta)x)}{E[M]P(X > x) + P(Y > x)} = 1. 
\end{align*}

Now suppose that $E[M] = \infty$ and $-(\alpha \wedge 2) < \beta(f) < 0$. Note that Lemma~\ref{L.SumInfM} gives
\begin{align*}
P(S +Y > x) &\leq P(S+Y > x, S \leq \delta x) + P(S > \delta x, M \leq z) + P(M > z) \\
&\leq P(Y > (1-\delta)x) + P\left( \sum_{i=1}^M (X_i - E[X])  > \delta^2 x, M \leq z \right) + P(M > z) \\
&= P(Y > (1-\delta)x) + o(P(Y > \delta^2 x))
\end{align*}
as $x \to \infty$. Hence,
\begin{align*}
&\lim_{\delta \downarrow 0} \limsup_{x \to \infty} \frac{P(S + Y > x)}{P(Y > x)} \leq \lim_{\delta \downarrow 0} \limsup_{x \to \infty} \frac{ P(Y > (1-\delta)x)}{P(Y > x)} = 1. 
\end{align*}

It follows that for $E[M] \leq \infty$ we have
$$ \limsup_{x \to \infty} \frac{P(S + Y > x)}{1(E[M] < \infty) E[M] P(X > x) + P(Y > x)} \leq 1.$$

We now proceed to prove a lower bound for $P(S+Y > x)$. Similarly as for the upper bound, we have for any fixed $0 < \delta < 1$ and $\hat z = 1(E[X] < 0) (1-\delta)\delta x/|E[X]| + x1(E[X] \geq 0)$, 
\begin{align}
P(S+Y > x) &\geq P( S + Y > x, S > (1+\delta) x) + P( S + Y > x, S \leq (1+\delta) x, Y > (1+\delta) x) \notag \\
&\geq P(S > (1+\delta) x) - P(S+Y \leq x, S > (1+\delta) x) + P(Y > (1+\delta) x) \notag \\
&\hspace{5mm} - P\left(  \{ S+Y > x, S \leq (1+\delta)x \}^c \cap \{ Y > (1+\delta)x \} \right) \notag \\
&\geq P(S > (1+\delta) x)  + P(Y > (1+\delta) x) - P(S > x, Y \leq -\delta x) \notag  \\
&\hspace{5mm} - P(S+Y \leq x, Y > (1+\delta)x) - P(S > (1+\delta) x, Y > (1+\delta)x) \notag  \\
&\geq P(S > (1+\delta) x)  + P(Y > (1+\delta) x) - P(S > x, Y \leq -\delta x) \notag \\
&\hspace{5mm} - P(S \leq -\delta x, Y > x, M \leq \hat z) - P(M > \hat z) - P(S > x, Y > x) . \notag
\end{align}

We start again by assuming $E[M] < \infty$ and noting that by \eqref{eq:Intersection} we have $P(S > x, Y > x) = o(P(X > x) + P(Y > x))$ as $x \to \infty$, and by assumption, $P(M > \hat z) = o(P(Y > x))$. Moreover, the same arguments leading to \eqref{eq:Intersection} also yield, for $E[X] > 0$, 
\begin{align*}
P( S > x, Y \leq -\delta x) &\leq P(X > (1-\delta)x) E[M 1(Y \leq -\delta x)] + P( (\mu+\delta) M > x) \\
&\hspace{5mm} + K\left( x^{-\varepsilon} P(X > x) E[M+1] +  x^{-\varepsilon'} P(M > \delta x/\log x) \right. \\
&\hspace{5mm} \left. + e^{-\varepsilon'' \sqrt{\log x}} P( M > \delta x/(2\mu)) 1(\mu > 0) \right) \\
&= o\left( P(X > x) + P(Y > x) + x^{-\varepsilon'} P(Y > \delta x/\log x)  \right) \\
&= o\left( P(X > x) + P(Y > x)  \right),
\end{align*}
and for $E[X] < 0$, 
\begin{align*}
P( S > x, Y \leq -\delta x)  &\leq E\left[ 1(Y \leq -\delta x) M P(X > (1-\delta) x) (1+o(1)) \right] = o( P(X > x)).
\end{align*}
Therefore, $P(S > x, Y \leq -\delta x) = o(P(X > x) + P(Y > x))$ as $x \to \infty$. To show that $P(S \leq -\delta x, Y > x, M \leq \hat z) = o(P(X > x) + P(Y > x))$, note that
\begin{align*}
P(S \leq -\delta x, Y > x, M \leq \hat z) &= P\left(\sum_{i=1}^M (E[X] - X_i ) \geq ME[X] + \delta x, Y > x, M \leq \hat z \right) \\
&\leq P\left(\sum_{i=1}^M (E[X] - X_i ) \geq \delta^2 x, Y > x, M \leq \hat z \right).
\end{align*}
Now use Burkholder's inequality with $1 < \gamma < \alpha \wedge 2$ to obtain that for some constant $K_\gamma < \infty$, 
\begin{align*}
 P\left(\sum_{i=1}^M (E[X] - X_i ) \geq \delta^2 x, Y > x, M \leq \hat z \right) &\leq K_\gamma E\left[ 1(Y > x, M \leq \hat z) \frac{M E[|X - E[X]|^\gamma]}{(\delta^2 x)^\gamma} \right]  \\
 &\leq \frac{K_\gamma E[|X-E[X]|^\gamma]  \hat z}{\delta^{2\gamma} x^\gamma} P(Y > x) = o(P(Y > x))
\end{align*}
as $x \to \infty$. It follows that $P(S \leq -\delta x, Y > x, M \leq \hat z)  = o(P(Y > x))$ as $x \to \infty$. To complete the analysis of the lower bound for the case $E[M] < \infty$, use Lemma~4.2 in \cite{Olvera_12a} to obtain that
\begin{align*}
P(S > (1+\delta)x) &\geq E\left[ 1(M \leq x/\log x) M P(X > (1+\delta)^2x) \right] + o(P(X > x)).
\end{align*}
We conclude that for $E[M] < \infty$,
\begin{align*}
&\lim_{\delta \downarrow 0} \liminf_{x \to \infty} \frac{P(S + Y > x)}{E[M] P(X > x) + P(Y > x)} \\
&\geq \lim_{\delta \downarrow 0} \liminf_{x \to \infty} \frac{E[M 1(M \leq x/\log x)] P(X > (1+\delta)^2 x) + P(Y > (1+\delta)x)}{E[M] P(X > x) + P(Y > x)} = 1.
\end{align*}

Finally, suppose that $E[M ] = \infty$ and $-(\alpha \wedge 2) < \beta(f) < 0$ and note that
\begin{align*}
P(S + Y > x) &\geq P(S+Y > x, Y > (1+\delta) x) \\
&\geq P(Y > (1+\delta)x) - P(Y > (1+\delta)x, S+Y \leq x ) \\
&\geq  P(Y > (1+\delta) x) - P(S \leq -\delta x, M \leq \hat z) - P(M > \hat z) \\
&\geq P(Y > (1+\delta) x) - P\left( \sum_{i=1}^M (E[X] - X_i )  \geq \delta^2 x , M \leq \hat z \right) - P(M > \hat z).
\end{align*}
Now use Lemma~\ref{L.SumInfM} and the observation that $P(M > \hat z) = o(P(Y > x))$ to obtain that 
$$\lim_{\delta \downarrow 0} \liminf_{x \to \infty} \frac{P(S+Y > x)}{P(Y > x)} \geq \lim_{\delta \downarrow 0} \liminf_{x \to \infty} \frac{P(Y > (1+\delta) x)}{P(Y > x)} = 1.$$
We conclude that for $E[M] \leq \infty$ we have
$$\liminf_{x \to \infty} \frac{P(S+Y > x)}{1(E[M] < \infty) E[M] P(X > x) + P(Y > x)} \geq 1.$$
This completes the proof of the theorem. 
\end{proof}

\bigskip

The last proof in the paper corresponds to the claims made in Example~\ref{E.CorrelatedRV}, which stated that $P(\mathcal{CN} > x)$ and $P(\mathcal{CQ}> 0)$ become negligible with respect to either $P(\mathcal{N}_0 > x)$ or $P(\mathcal{Q}_0 > x)$, respectively, in the presence of in-degree/out-degree dependence.

\begin{proof}[Proof of the claims in Example~\ref{E.CorrelatedRV}]
To prove the first claim note that
 $$P(\mathcal{CN} > x) = \begin{cases} E\left[ 1\left(  \frac{\zeta \mathscr{D}^- }{\mathscr{D}^+ \vee 1}  > x \right) \cdot \frac{\mathscr{D}^+}{E[\mathscr{D}^+]} \right], & \text{for the DCM}, \vspace{2mm} \\
 E\left[ 1\left( \frac{\zeta Z^-}{Z^++1} > x \right) \cdot \frac{W^+}{E[W^+]} \right], &  \text{for the IRD} .
 \end{cases}$$
 It follows, for the DCM, that if we let $a(x)^2 = P(\mathscr{D}^- > x)/P(\mathscr{D}^-/(\mathscr{D}^+\vee 1) > x)$, then
 \begin{align*}
 P(\mathcal{CN} > x) &\leq \frac{1}{E[\mathscr{D}^+]} \left( a(x) P( \mathscr{D}^-/(\mathscr{D}^+ \vee 1) > x) + \int_{a(x)}^\infty P\left( \frac{\mathscr{D}^-}{\mathscr{D}^+ \vee 1} > x, \mathscr{D}^+ > t \right) dt  \right) \\
 &\leq \frac{1}{E[\mathscr{D}^+]} \left( a(x)^{-1} P(\mathscr{D}^- > x) + \int_{a(x)}^\infty P\left(\mathscr{D}^-> tx \right) dt  \right) \\
 &= \frac{P(\mathcal{N}_0 > x) }{E[\mathscr{D}^+]} \left( a(x)^{-1} + \int_{a(x)}^\infty t^{-\alpha} (1+o(1))  \, dt  \right) = o(P(\mathcal{N}_0 > x))
 \end{align*}
 as $x \to \infty$. For the IRD, let $\kappa = E[W^-]/E[W^+ + W^-]$, $\overline{\kappa} = 1- \kappa$, and note that by the union bound we have
\begin{align*}
P(\mathcal{CN} > x) &\leq E\left[ 1\left( \frac{Z^- }{Z^+ + 1} > x, Z^- \leq 2\overline{\kappa} W^-  \right) \cdot \frac{W^+}{E[W^+]} \right] \\
&\hspace{5mm} + E\left[ 1\left( Z^- \geq \max\{ x, 2\overline{\kappa} W^-\}  \right) \cdot \frac{W^+}{E[W^+]} \right]  \\
&\leq \frac{1}{E[W^+]} \left\{ E\left[ 1\left( \frac{2\overline{\kappa} W^-}{Z^+ + 1} > x,  \frac{2\overline{\kappa} W^-}{x}  \leq \frac{\kappa W^++1}{2}  \right) \cdot W^+ \right] \right. \\
&\hspace{5mm}  + E\left[ 1\left(  \frac{2\overline{\kappa} W^-}{x} > \frac{\kappa W^++1}{2}  \right) \cdot W^+ \right]  \\
&\hspace{5mm} \left. + E\left[ 1\left( Z^- - \overline{\kappa} W^- \geq \max\{ x - \overline{\kappa}W^-, \overline{\kappa} W^-\}  \right) \cdot W^+ \right] \right\}.
 \end{align*}
 Now use the inequality $P(X -\lambda \geq x) \vee P( X- \lambda \leq -x) \leq e^{-\frac{x^2}{\lambda+x}} $ for a Poisson r.v.~with mean $\lambda$ and $x > 0$ to obtain that
\begin{align*}
&E\left[ 1\left( Z^- - \overline{\kappa} W^- \geq \max\{ x - \overline{\kappa}W^-, \overline{\kappa} W^-\}  \right) \cdot W^+ \right]  \\
&\leq E\left[ e^{-( \max\{ x - \overline{\kappa}W^-, \overline{\kappa} W^-\})^2/(\overline{\kappa} W^- + \max\{ x - \overline{\kappa}W^-, \overline{\kappa} W^-\})}  \cdot W^+ \right]  \\
&= E\left[ e^{-\overline{\kappa} W^-/2} 1(2\overline{\kappa}W^- \geq x) \cdot W^+ \right]  + E\left[ e^{-(x - \overline{\kappa}W^-)^2/x} 1(2\overline{\kappa}W^- < x) \cdot W^+ \right]  \\
&\leq e^{-x/4} E[W^+] = o\left( P(W^- > x) \right) 
\end{align*}
as $x \to \infty$.  Also, the same arguments used for the DCM give
$$E\left[ 1\left(  \frac{2\overline{\kappa} W^-}{x} > \frac{\kappa W^++1}{2}  \right) \cdot W^+ \right] \leq E\left[ 1\left(  \frac{ W^-}{W^++1} > \frac{\kappa x}{4\overline{\kappa}}  \right) \cdot W^+ \right]  = o\left( P(W^- > x) \right)$$
as $x \to \infty$. For the remaining expectation use the same inequality for the Poisson distribution used earlier to obtain that
\begin{align*}
&E\left[ 1\left( \frac{2\overline{\kappa} W^-}{Z^+ + 1} > x,  \frac{2\overline{\kappa} W^-}{x}  \leq \frac{\kappa W^++1}{2}  \right) \cdot W^+ \right] \\
&\leq E\left[ P( Z^+ - \kappa W^+ \leq -(\kappa W^+ + 1 - 2\overline{\kappa} W^-/x) | W^-, W^+)   1\left(1 <  \frac{2\overline{\kappa} W^-}{x}  \leq \frac{\kappa W^++1}{2}  \right) \cdot W^+ \right] \\
&\leq E\left[  e^{- \frac{(\kappa W^+ +1 - 2\overline{\kappa} W^-/x)^2}{2\kappa W^+  +1 - 2\overline{\kappa} W^-/x} } \cdot 1\left( 1 < \frac{2\overline{\kappa} W^-}{x}  \leq \frac{\kappa W^++1}{2}  \right) \cdot W^+   \right] \\
&\leq E\left[ e^{- \frac{(\kappa W^+/2 )^2}{3\kappa W^+/2}  } \cdot 1\left(  2 \overline{\kappa} W^- > x  \right) W^+ \right] \\
&= E\left[ e^{-\kappa W^+/6 } \cdot 1\left(  2 \overline{\kappa} W^- > x  \right) W^+ \right] ,
\end{align*}
where in the third inequality we used that $h(t) = (a-t)^2/(2a-t)$ is decreasing on $[0, a]$ ($a = \kappa W^+$). Next, let $\varphi(x) = \sup_{t \geq x/(2\overline{\kappa} \log x)} P(W^-/(W^+ \vee 1) > t)/P(W^- > t) \to 0$ as $x \to \infty$, fix $q > 1+\alpha$, set $b(x) = \min\{ \log x, \varphi(x)^{-1/q} \}$, and note that
\begin{align*}
&E\left[ e^{-\kappa W^+/6}  1\left( 2\overline{\kappa} W^- > x \right) W^+ \right] \\
&\leq b(x) P(W^- > x/(2\overline{\kappa}), W^+ \leq b(x))  + \sup_{t \geq b(x)} t e^{-\kappa t/6} P( W^- > x/(2\overline{\kappa})) \\
&\leq b(x) P\left( \frac{W^-}{W^+ \vee 1} \cdot b(x) > x/(2\overline{\kappa}) \right) + o\left( P(W^- > x) \right) \\
&\leq  b(x) P( W^- > x/(2\overline{\kappa} b(x)) ) \varphi(x) + o\left( P(W^- > x) \right) \\
&\leq b(x) \varphi(x) \cdot \frac{P(W^- > x/(2\overline{\kappa} b(x)))}{ P(W^- > x)} \cdot P(W^- > x) + o\left( P(W^- > x) \right) .
\end{align*}
Now use Potter's Theorem to obtain that for any $0 < \epsilon < q-1-\alpha$, 
$$ b(x) \varphi(x) \cdot \frac{P(W^- > x/(2\overline{\kappa} b(x)))}{ P(W^- > x)} = o\left( b(x)^{1+\alpha+\epsilon} \varphi(x) \right) = o\left( \varphi(x)^{\frac{q-1-\alpha-\epsilon}{q} } \right) = o(1)$$
as $x \to \infty$, which implies that
$$E\left[ e^{-\kappa W^+/6}  1\left( 2\overline{\kappa} W^- > x \right) W^+ \right]  = o\left( P(W^- > x) \right)$$
as $x \to \infty$. Finally, note that since $Z^- \stackrel{\mathcal{D}}{=} \mathcal{N}_0$ and $P(W^- > x) = O(P(Z^- > x))$ as $x \to \infty$, then
$$P(\mathcal{CN} > x) = o\left( P(W^- > x) \right) = o\left( P(\mathcal{N}_0 > x) \right), \qquad x \to \infty.$$

The proof of the claim for $P(\mathcal{CQ} > x)$ follows the same steps and is therefore omitted. 
 \end{proof}

\bibliographystyle{plain}
\bibliography{DepPageRank}

\end{document}